\definecolor{lightgray}{gray}{0.9}
\newtheorem{theorem}{Theorem}[section]
\newtheorem{proposition}[theorem]{Proposition}
\newtheorem{corollary}[theorem]{Corollary}
\newtheorem{lemma}[theorem]{Lemma}
\newtheorem{definition}[theorem]{Definition}
\newtheorem{remark}[theorem]{Remark}
\newtheorem{example}[theorem]{Example}
\newcommand{\integers}{\mathbb{Z}}
\newcommand{\ihat}{\hat{\iota}}
\newcommand{\highlight}[1]{#1}
 \DeclareMathOperator{\mcg}{Map}
 \DeclareMathOperator{\orb}{Orb}
\title{Braided multitwists}
\author{Rodrigo De Pool}
\thanks{The author acknowledges financial support from the grant  CEX2019-000904-S funded by  MCIN/AEI/ 10.13039/501100011033 and from the grant  PGC2018-101179-B-I00.}
\begin{document}

\maketitle

\begin{abstract}
  We provide a characterization for multitwists satisfying the braid relation in the mapping class group of an orientable surface.
\end{abstract}


\section{Introduction}\label{introd}

Let $S$ be an orientable surface  \highlight{possibly with punctures or boundary}. Let $\mcg(S)$ be the mapping class group of $S$, that is, the group of homeomorphisms of $S$ up to homotopy. In this article we characterize \emph{braided multitwists}, i.e, multitwists $\tau_A, \tau_B \in \mcg(S)$ satisfying the braid relation $\tau_A \tau_B \tau_A = \tau_B \tau_A \tau_B$. First, let us consider some examples:

\begin{example}
Consider $a$ and $b$ two curves in $S$ intersecting once. Denote their Dehn twists by $\delta_a$ and $\delta_b$. It is known that \[ \delta_a \delta_b \delta_a = \delta_b \delta_a \delta_b, \] and by the same equality the inverses $\delta_a^{-1}, \delta_b^{-1}$  are also braided. It is a lemma that two Dehn twists $\delta_a,\delta_b$ are braided if and only if the curves $a$ and $b$ intersect once (\cite[Lemma 4.3]{mccarthy_automorphisms_1986}).
\end{example}

Now, we construct braided multitwists that are not Dehn twists.

\begin{example}\label{ex1}

Let $\tau_A = \delta_{a_1} \delta_{a_2}\delta_{a_3}\delta_{a_4}^{-1}$ and $\tau_B = \delta_{b_1}\delta_{b_2}\delta_{b_3}\delta_{b_4}^{-1}$ be two multitwists along the curves $a_i,b_j$ shown in Figure \ref{fig:img1}. Since $\delta_{a_i}$ commutes with $\delta_{b_j}$ whenever $i\neq j$ and   $a_i, b_i$ intersect once, we  deduce  $\tau_A \tau_B \tau_A = \tau_B \tau_A \tau_B$ from the previous example. 

\end{example}

Note that in Example \ref{ex1} the multitwists $\tau_A$ and $\tau_B$ do not share any common Dehn twist.  An easy way  to create new braided multitwists is to add common components with the  same power. We do so in the next example:

\begin{example}\label{ex:crucial}
Let $\tau_A,\tau_B$ be the multitwists of Example \ref{ex1} and let $d$ be the curve shown in Figure \ref{fig:img1}. Then, the multitwists $ \tau_A \delta_d^m$ and $ \tau_B \delta_d^m$ satisfy the braid relation. 
\end{example}

\begin{figure}
	    \labellist
		\small\hair 2pt
		\pinlabel $a_2$  at 268 365
		\pinlabel {$b_2$}  at 280 295
		\pinlabel {$a_4$}  at 70 194
		\pinlabel {$b_4$}  at 185 232
		\pinlabel {$d$}  at 280 257
		\pinlabel {$b_1$}  at 362 249
		\pinlabel {$a_1$}  at 517 210
		\pinlabel {$b_3$}  at 290 145
		\pinlabel {$a_3$}  at 285 65
		\endlabellist
		\centering
		\includegraphics[width=0.5\linewidth]{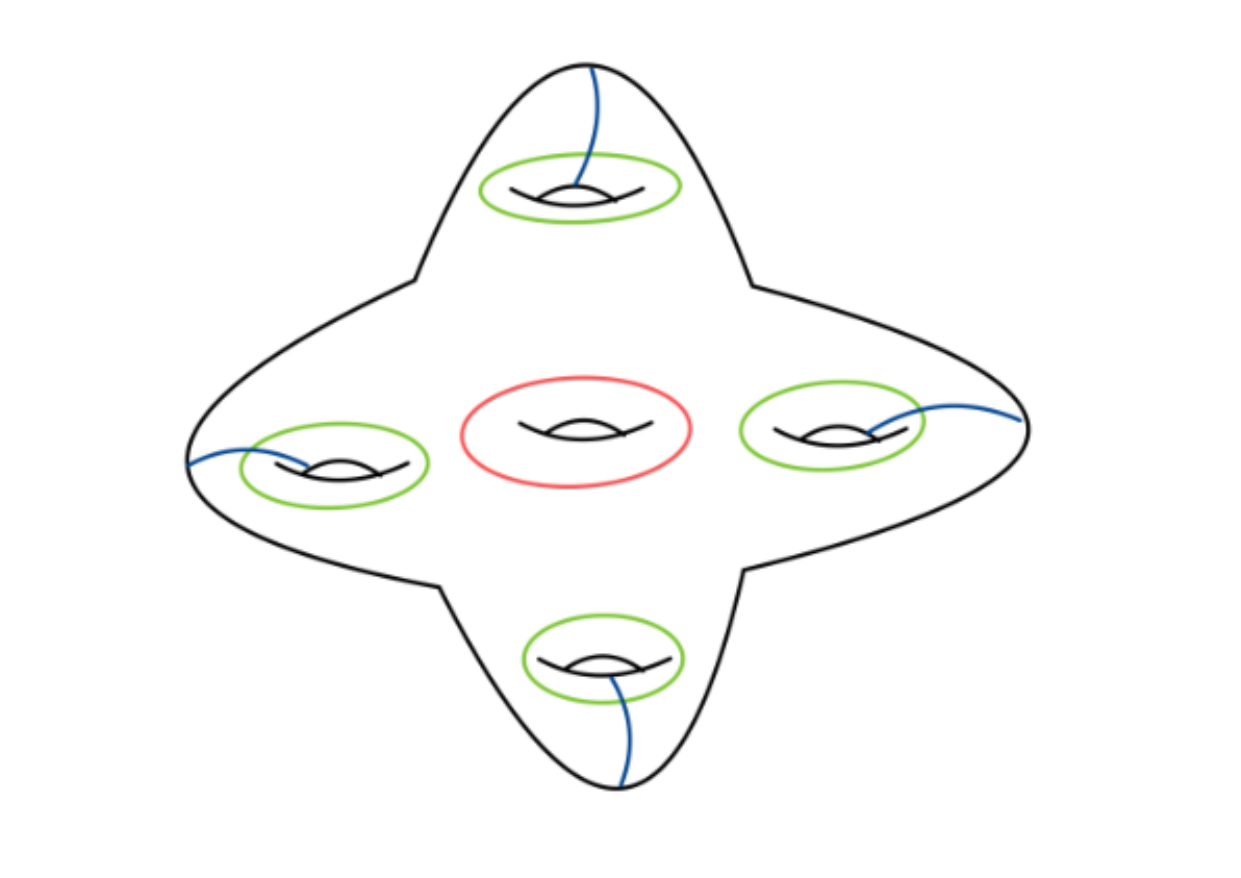}
		\caption{Genus five surface.}
		\label{fig:img1}
\end{figure}

The goal of this article is to prove that braided multitwists are essentially as in Example \ref{ex:crucial}.

\begin{theorem}\label{thm1}
	Let $S$ be an orientable surface. If $\tau_A, \tau_B \in \mcg(S)$ are two braided multitwists, then
	\begin{align*}
		& \tau_A=\delta_{a_1}^{n_1} \dots \delta_{a_k}^{n_k} \tau_C\\ 
		& \tau_B=\delta_{b_1}^{n_1} \dots \delta_{b_k}^{n_k} \tau_C,
	\end{align*}
	 where $\tau_C$ is a common multitwist,  $n_i \in \{-1,1\}$ and the curves $a_1,\dots,a_k,\,b_1,\dots,b_k$ are pairwise disjoint except for $i(a_i,b_i)=1$.
\end{theorem}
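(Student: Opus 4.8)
The plan is to turn the braid relation into a conjugacy statement, split off the common part, and then pin down the remaining configuration with an intersection-number argument. Throughout, for a multicurve $X=\{x_1,\dots,x_r\}$ write $\tau_X=\prod_i\delta_{x_i}^{m_i}$ with all $m_i\neq 0$; recall that $\tau_X$ determines $X$ together with the powers $m_i$, and that for an orientation-preserving $f$ one has $f\tau_X f^{-1}=\prod_i\delta_{f(x_i)}^{m_i}$ (all maps conjugated by below are products of multitwists, hence orientation-preserving). Let $A,B$ be the supporting multicurves of $\tau_A,\tau_B$. Setting $w:=\tau_A\tau_B\tau_A=\tau_B\tau_A\tau_B$, the braid relation gives $w\tau_A w^{-1}=\tau_B$ and $w\tau_B w^{-1}=\tau_A$, hence $w(A)=B$, $w(B)=A$, $|A|=|B|$, and the power of $\tau_A$ along any $a$ equals the power of $\tau_B$ along $w(a)$. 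I would first isolate the common part: if $a\in A$ is disjoint from all of $B$ then $\tau_A$ and $\tau_B$ fix $a$, so $w(a)=a$ and thus $a=w(a)\in B$; and any $c\in A\cap B$, lying in two multicurves, is disjoint from all of $A\cup B$, hence $w$-fixed, hence enters $\tau_A$ and $\tau_B$ with the same power. So the common multitwist $\tau_C$ along $A\cap B$ is well defined, it commutes with $\tau_A\tau_C^{-1}$ and $\tau_B\tau_C^{-1}$, and the braid relation for $(\tau_A,\tau_B)$ is equivalent to the braid relation for $(\tau_A\tau_C^{-1},\tau_B\tau_C^{-1})$. Replacing the pair accordingly, I may assume $A\cap B=\emptyset$, $|A|=|B|$, and each curve of $A$ (resp. $B$) meets a curve of $B$ (resp. $A$).

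The central computation rests on the identity
\[
  i(A,B)=i\bigl(\tau_A(B),B\bigr)=i\bigl(\tau_B(A),A\bigr),
\]
which follows from the braid relation via $\tau_B\tau_A(B)=w(B)=A$, i.e. $\tau_A(B)=\tau_B^{-1}(A)$. Expanding $i(\tau_A(B),B)=\sum_{b\in B}i(\tau_A(b),b)+\sum_{b\neq b'}i(\tau_A(b),b')$ and using the lower bound $i(\tau_A(b),b)\ge\sum_{a\in A}|m_a|\,i(a,b)^2\ge\sum_{a\in A}i(a,b)$, the diagonal sum alone is already $\ge i(A,B)$; by the identity all inequalities are equalities and all off-diagonal terms vanish. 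This yields $|m_a|=1$ for every $a\in A$, $i(a,b)\le 1$ for all $a\in A$, $b\in B$, and $i(\tau_A(b),b')=0$ whenever $b\neq b'$. Since the twisting of $b$ around the disjoint curves of $A$ produces, in the disjoint annular neighbourhoods $\nu(a)$, intersections with $b'$ that cannot cancel one another, $i(\tau_A(b),b')=0$ forces that no curve of $A$ meets two distinct curves of $B$. Together with the symmetric statement for $B$ and the ``no isolated curve'' conclusion above, this gives $A\cup B=\{a_1,b_1\}\sqcup\cdots\sqcup\{a_k,b_k\}$ with $i(a_i,b_i)=1$, all other pairs disjoint, and $|m_{a_i}|=|n_{b_i}|=1$, where $m_{a_i},n_{b_i}$ denote the powers of $\tau_A,\tau_B$ along $a_i,b_i$.

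It remains to see that $m_{a_i}=n_{b_i}$. A regular neighbourhood $N_i$ of $a_i\cup b_i$ is a one-holed torus, the $N_i$ are pairwise disjoint, so the subgroup generated by $\delta_{a_1},\delta_{b_1},\dots,\delta_{a_k},\delta_{b_k}$ is the direct product of the $\langle\delta_{a_i},\delta_{b_i}\rangle\le\mcg(N_i)$; hence the braid relation for $\tau_A=\prod_i\delta_{a_i}^{m_{a_i}}$ and $\tau_B=\prod_i\delta_{b_i}^{n_{b_i}}$ is equivalent to the braid relation between $\delta_{a_i}^{m_{a_i}}$ and $\delta_{b_i}^{n_{b_i}}$ in $\mcg(N_i)$ for each $i$. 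Passing to the symplectic representation $\mcg(N_i)\to SL_2(\integers)$, where $\delta_{a_i},\delta_{b_i}$ become the standard generators, a direct check shows that, with exponents in $\{-1,1\}$ and $i(a_i,b_i)=1$, the braid relation holds only when the two exponents agree; write $n_i:=m_{a_i}=n_{b_i}$. Finally, multiplying back the common factor $\tau_C$ — whose support is disjoint from all the $a_i,b_i$ — produces exactly the asserted normal form.

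The step I expect to be the real obstacle is the intersection-number analysis: converting the soft identity $i(A,B)=i(\tau_A(B),B)$ into rigid combinatorics requires the lower bounds $i(\tau_A(b),b)\ge\sum_a|m_a|\,i(a,b)^2$ and $i(\tau_A(b),b')\ge\sum_a|m_a|\,i(a,b)\,i(a,b')$ for multitwists with \emph{arbitrary-sign} powers $m_a$ — equivalently, a careful verification that $b$ and $\tau_A(b)$ are in minimal position globally, so that the contributions localized in the disjoint annuli $\nu(a)$ genuinely add rather than cancel. The other potential nuisances — orientation-reversing classes, small or disconnected surfaces, and injectivity subtleties of $\mcg(N_i)\to\mcg(S)$ — are either vacuous here or handled by standard reductions.
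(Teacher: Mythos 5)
Your reduction to the case $A\cap B=\emptyset$ and the starting identity $i(A,B)=i(\tau_A(B),B)$ are both correct and match the paper's set-up, but the pivotal step of your second paragraph rests on an inequality that is false. You invoke $i(\tau_A(b),b)\ge\sum_{a\in A}|m_a|\,i(a,b)^2$ for a multitwist with \emph{arbitrary-sign} exponents. The correct statement (Proposition \ref{prop:formulaEscondidaIvanov}) is the exact formula
\[
i(b,\tau_A\cdot b)=\sum_{a\in A}\bigl(|m_a|\,i(a,b)-1\bigr)\,i(a,b)+X(b,\tau_A)
=\sum_{a\in A}|m_a|\,i(a,b)^2-\sum_{a\in A}i(a,b)+X(b,\tau_A),
\]
and since $0\le X(b,\tau_A)\le\sum_a i(a,b)$, your bound holds only when $X$ is maximal, i.e.\ essentially only in the positive (or negative) case. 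A concrete counterexample: take disjoint curves $a_1,a_2$ each meeting $b$ once and set $\tau=\delta_{a_1}\delta_{a_2}^{-1}$; then each of the two arcs of $b$ joins an $a_1$-point to an $a_2$-point, so $X(b,\tau)=0$ and $i(\tau(b),b)=0$, whereas your bound would assert $i(\tau(b),b)\ge2$ --- the twisting in the two annuli cancels, which is exactly the phenomenon you dismiss in your last paragraph for the off-diagonal terms as well. With the corrected formula the diagonal sum only satisfies $\sum_b i(\tau_A(b),b)=\sum_{a,b}\bigl(|m_a|\,i(a,b)^2-i(a,b)\bigr)+\sum_b X(b,\tau_A)$, which is in general \emph{not} $\ge i(A,B)=\sum_{a,b}i(a,b)$, so you cannot conclude that the inequalities are equalities, and none of the rigid consequences ($|m_a|=1$, $i(a,b)\le1$, each $a$ meeting a single $b$) follow.

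This is not a repairable detail but the heart of the problem: the configurations your argument would instantly exclude (a curve $a$ with $i(a,\tau_A\tau_B\cdot a)=0$ or $2$, or meeting a curve of exponent $\pm2$, i.e.\ types 1, 4 and 5 of Table \ref{table}) are precisely the ones compatible with Ivanov's formula, and ruling them out is what occupies most of the paper (the orbit-size-two analysis of Section \ref{sec:OrbitaTam2} and the final case analysis, which use homological intersection identities and explicit planar-torus pictures rather than a single counting identity). You correctly flagged this step as the likely obstacle; the resolution is that the needed inequality simply fails for mixed signs, and a genuinely different mechanism is required. Your first paragraph (splitting off $\tau_C$) and your final observation that the powers along $a$ and $w(a)$ agree are fine and coincide with Lemma \ref{lemma:ReduceToNoCommonCurve} and the discussion opening Section \ref{secIntoTheProof}; the $SL_2(\integers)$ check at the end is then unnecessary.
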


As an application of Theorem \ref{thm1}, we can describe homomorphisms from braid groups to mapping class groups that send standard generators to multitwists. Recall  the braid group on $n$-strands is given by the presentation  \[ B_n = \langle \sigma_1, \,\dots, \, \sigma_{n-1}| \: \sigma_i \sigma_{i+1} \sigma_i  = \sigma_{i+1}  \sigma_i  \sigma_{i+1} \;\forall i, \;\; [\sigma_i, \sigma_j]=1 \;\forall |i-j|>1 \rangle, \]  and the $\sigma_i$'s  are said to be the \emph{standard braid generators}. 

A natural way to produce an embedding $\varphi: B_n \hookrightarrow \mcg(S)$ is to send  each standard generator $\sigma_i$ to a Dehn twist $\delta_{c_i}$, where  the curves $c_i,c_j$ intersect once if $|i-j|=1$ and are disjoint otherwise.  Any such $\varphi$ (or $\varphi^{-1}$) is known as a \emph{geometric} embedding. If we replace $\delta_{c_i}$ by a multitwist, then Theorem \ref{thm1} yields a description of $\varphi$: 

\begin{corollary}\label{cor:braids}
	Let $S$ be an orientable surface and let $B_n$ be the braid group on $n$-strands. If $\varphi: B_n\to \mcg(S)$ sends a standard generator to a multitwist, then $\varphi$ factors as $ \varphi = \left(\prod_{i=1}^k \varphi_i\right) \circ d,$ where $d: B_n \to \prod_{i=1}^k B_n$ is the diagonal homomorphism, $\varphi_i$ is a geometric embedding for $1\leq i<k$, and  $\varphi_k$ has cyclic image.
\end{corollary}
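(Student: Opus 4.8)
The idea is to apply Theorem \ref{thm1} repeatedly to consecutive pairs of generators and peel off one geometric embedding at a time. Set $\tau_i := \varphi(\sigma_i)$. Since $\sigma_i$ and $\sigma_{i+1}$ braid, the multitwists $\tau_i,\tau_{i+1}$ braid, so by Theorem \ref{thm1} we may write $\tau_i = \delta_{a^{(1)}_i}^{n_i}\,\eta_i$ for a uniform exponent pattern; the first task is to check that the same ``geometric part'' can be chosen coherently across the whole chain $\tau_1,\dots,\tau_{n-1}$. Concretely, I would first argue that the common multitwist $\tau_C$ produced by Theorem \ref{thm1} for the pair $(\tau_i,\tau_{i+1})$ is forced to be compatible with the one for $(\tau_{i+1},\tau_{i+2})$: the curve $a^{(1)}_{i+1}$ (the ``$b$-curve'' from the first pair, but the ``$a$-curve'' from the second) intersects $a^{(1)}_i$ once and $a^{(1)}_{i+2}$ once, and both are disjoint from the remaining common multitwist. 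One must also rule out degeneracies; e.g.\ using that two non-commuting multitwists cannot both braid and commute, and that the $\sigma_i$ with $|i-j|>1$ commute forces $a^{(1)}_i$ disjoint from $a^{(1)}_j$. This produces a geometric configuration of curves $c^{(1)}_1,\dots,c^{(1)}_{n-1}$ with the right intersection pattern, hence a geometric embedding $\varphi_1:B_n\to\mcg(S)$ sending $\sigma_i\mapsto\delta_{c^{(1)}_i}^{\varepsilon}$ (the uniform sign $\varepsilon$ being absorbed into the meaning of ``geometric'', since $\varphi$ and $\varphi^{-1}$ are both called geometric).

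Next I would define $\psi := \varphi_1^{-1}\cdot\varphi$ in the sense that $\psi(\sigma_i)$ is the residual multitwist $\tau_i(\delta_{c^{(1)}_i})^{-\varepsilon}$, and verify $\psi$ is again a homomorphism $B_n\to\mcg(S)$ sending standard generators to multitwists, now supported on curves disjoint from all $c^{(1)}_j$ (because common components were split off with matching powers, these residual twists are ``simpler''). Here one needs a quantitative measure of complexity --- total number of curves counted with multiplicity, or the sum $\sum|n_j|$ over the supporting twists --- that strictly drops unless the residual multitwists already pairwise commute. Then I would induct: either $\psi$ has abelian (indeed cyclic, once we observe the image is generated by a single multitwist fixed under the braiding, because all $\psi(\sigma_i)$ are forced equal) image, in which case we stop and set $\varphi_k = \psi$; or we extract another geometric factor $\varphi_2$ and repeat. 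The induction terminates because the complexity is a nonnegative integer that strictly decreases. Assembling, $\varphi(\sigma_i) = \delta_{c^{(1)}_i}^{\varepsilon_1}\cdots\delta_{c^{(k-1)}_i}^{\varepsilon_{k-1}}\cdot\tau_k$ with $\tau_k$ a fixed common multitwist independent of $i$, which is exactly the statement that $\varphi = \bigl(\prod_{i=1}^k\varphi_i\bigr)\circ d$.

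The main obstacle I anticipate is the coherence step in the first paragraph: Theorem \ref{thm1} is stated for a \emph{single pair}, and its conclusion involves an unordered decomposition, so a priori the curve that plays the role of $b_j$ for the pair $(\tau_i,\tau_{i+1})$ need not literally coincide with the curve playing the role of $a_j$ for $(\tau_{i+1},\tau_{i+2})$. One must show the decomposition of $\tau_{i+1}$ is essentially unique --- or at least that the geometric part relative to its left neighbour agrees with the geometric part relative to its right neighbour --- exploiting that distinct $\sigma$'s at distance $\geq 2$ commute to pin down which Dehn-twist factors of $\tau_{i+1}$ are ``shared with the left'' versus ``shared with the right''. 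A clean way to handle this is to pass to the canonical reduction / the decomposition of each $\tau_i$ into its primitive Dehn-twist factors (a multitwist has a well-defined support and well-defined integer multiplicities), so that ``sharing a component'' becomes an honest set-theoretic statement about curves, and then Theorem \ref{thm1} applied to adjacent pairs becomes a statement about how these finite curve-systems overlap. The rest is bookkeeping, plus the small lemma that a braid representation in which all generators have equal image automatically has cyclic image.
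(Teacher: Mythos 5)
Your overall strategy --- apply Theorem \ref{thm1} to each adjacent pair $(\varphi(\sigma_i),\varphi(\sigma_{i+1}))$, match the resulting decompositions along the chain, and split off geometric layers plus one common central factor --- is the natural one, and you have correctly located the one genuine difficulty: the coherence of the decomposition of $\tau_{i+1}=\varphi(\sigma_{i+1})$ with respect to its left and right neighbours. But your proposal stops at ``one must show'' precisely there, and the fact that closes the gap is never named, so as written this is a plan with its decisive step unresolved. Here is how to close it. Write $A_i$ for the set of curves of $\tau_i$. First, the decomposition of Theorem \ref{thm1} is canonical (see Lemma \ref{lemma:ReduceToNoCommonCurve}): the common multitwist for the pair $(\tau_i,\tau_{i+1})$ is supported exactly on $A_i\cap A_{i+1}$, with matching exponents, and every curve of $\tau_{i+1}$ outside $A_i$ intersects its partner in $A_i$ exactly once. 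Second --- this is the input you only allude to --- $\tau_{i-1}$ and $\tau_{i+1}$ commute, and two multitwists with nonzero exponents commute only if every curve of one is disjoint from every curve of the other (their canonical reduction systems are their supports, and the union of the canonical reduction systems of the elements of an abelian subgroup is a curve system, by Birman--Lubotzky--McCarthy). Hence a curve $c\in A_i\cap A_{i+1}$ meets no curve of $A_{i-1}$, so it cannot be a non-common curve for the pair $(\tau_{i-1},\tau_i)$; therefore $A_i\cap A_{i+1}\subset A_i\cap A_{i-1}$, and by symmetry these sets coincide. Chaining over $i$ yields a single multitwist $\tau_0$ common to all the $\tau_i$ with the same exponents, and the bijections $A_i\setminus A_0\to A_{i+1}\setminus A_0$ assemble the remaining curves into $k-1$ chains $c^{(l)}_1,\dots,c^{(l)}_{n-1}$ of constant exponent $\varepsilon_l=\pm1$, each a geometric embedding (the sign is absorbed as you say); $\varphi_k(\sigma_i)=\tau_0$ is constant, hence has cyclic image.

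Two smaller points. You should record at the outset that \emph{all} $\varphi(\sigma_i)$ are multitwists, not only the one in the hypothesis: the standard generators are pairwise conjugate in $B_n$ and a conjugate of a multitwist is a multitwist. And your peel-one-layer-and-induct scheme, while workable, obliges you to re-verify at every stage that the residual map is a homomorphism sending generators to multitwists, which requires the converse direction of Theorem \ref{thm1} (the computation of Examples \ref{ex1} and \ref{ex:crucial}); once the coherence above is established the whole factorization can be written in one step, the only remaining check being that the images of distinct factors $\varphi_l,\varphi_m$ commute elementwise, which follows from the disjointness statements already listed.
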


One could hope that every embedding $B_n \hookrightarrow \mcg(S)$ comes from a diagonal decomposition of geometric embeddings; however this is not the case. For instance, Szepietowski constructed in \cite{szepietowski_embedding_2010}  an embedding $B_n \hookrightarrow \mcg(S)$ sending a standard generator to a non-trivial root of a Dehn twist.

Theorem \ref{thm1} fits into the bigger picture of results studying relations between multitwists. A closely related result is \highlight{\cite[Theorem 3.4]{hamidi-tehrani_groups_2002} }  of Hamidi-Tehrani, which  provides sufficient conditions  for two \emph{positive} multitwists to generate a free group of rank two. Leininger improved this result \highlight{in \cite[Theorem 6.1]{leininger_groups_2004} }  by classifying pairs of positive multitwists generating a free group. We highlight that Theorem \ref{thm1} for positive multitwists can be proved using Leininger's classification. However, Leininger's techniques do require the multitwists to be positive and it is not clear how to generalize them to the case of arbitrary multitwists. Thus, our proof of Theorem \ref{thm1} uses a different approach.

The original motivation for Theorem \ref{thm1} is to further understand homomorphisms between pure mapping class groups. We will study this application in a forthcoming paper. 

\subsection*{Acknowledgments}
The author would like to thank his supervisor Javier Aramayona for helpful conversations and the guidance provided.

\section{Preliminaries}\label{sec:preliminares}
Let $S$ be a connected orientable surface. By a curve on $S$ we will mean the homotopy class of an unoriented simple closed curve that does not bound a disk or a punctured disk. Brackets around an oriented curve will be used to denote the homology class of the curve.

For two curves $a$ and $b$  the \emph{(geometric) intersection number} $i(a,b)$ is minimum number of intersection points between representatives of $a$ and $b$. When the curves  $a$ and $b$ are oriented,  the \emph{algebraic intersection number} $\ihat([a],\,[b])$  is the sum of the signs at each intersection point. Importantly, this sum remains invariant regardless of the choice of representatives for  $[a]$ and $[b]$.

Throughout the article, we will blur the difference between curves and their representatives. Additionally, we will often consider representatives that intersect pairwise minimally. It is a theorem that such representatives always exist (see \cite[Chapter 1.2]{farb_primer_2012}).

If $a$ is a curve on $S$ and $A$ is a regular neighbourhood of $a$ given in (oriented) coordinates by $\{(h, \theta):\: h\in[0,1], \;\theta \in \mathbb{R}/(x\sim x+2\pi)\}$, we can define the twisting map $\delta_a:S\to S$ that sends $(h,\, \theta) \mapsto (h, \, \theta + h\theta)$  on $A$ and is the identity elsewhere. The resulting mapping class $\delta_a\in \mcg(S)$ is the \emph{Dehn twist} along the curve $a$.  Note that as a matter of convention, we take our Dehn twists to be twists to the \emph{right}.

A \emph{multitwist} $\tau_A\in \mcg(S)$ is a finite product of Dehn twists 
\[\tau_A = \delta_{a_1}^{n_1} \dots \delta_{a_k}^{n_k},\] where the curves $a_1,\dots, a_k$ are distinct, pairwise disjoint and $n_i \in \mathbb{Z}$. The multitwist is \emph{positive} if $n_i\geq 0$ for every $i$ and it is \emph{negative} if $n_i \leq 0$ for every $i$. The subindex $A$ will be used to denote the set  $A=\{a_1,\dots,a_k\}$, and we call $A$ the set of \emph{curves} of $\tau_A$. Abusing notation, we will refer to $n_i$ as the power of $a_i$ in $\tau_A$.

The mapping class group $\mcg(S)$ acts on the set of curves of $S$ and the action preserves the intersection number. We will write $h\cdot a$ to denote the action of the mapping class $h$ on the curve $a$. Similarly, $\mcg(S)$ acts linearly on the homology of $S$ and preserves the algebraic intersection number. We will write $h\cdot [a]$ for the action of the mapping class $h$ on the homology class $[a]$.

The rest of this section is devoted to studying formulas for the geometric intersection number and algebraic intersection number. For other fundamental results and definitions on mapping class groups, we refer the reader to \cite{farb_primer_2012}.

\subsection{Geometric intersection formulas}
Let $a$, $b$ and $c_j$ be curves on $S$ and consider a positive multitwist $\tau_C  = \delta_{c_1}^{n_1} \dots \delta_{c_k}^{n_k}$. The following is a well-known bound for the intersection number 
\begin{equation}\label{formulaPositiveTwists}
	i(a,b) \geq \left| i(a,\,\tau_C\cdot b) - \sum_{i=1}^k |n_i|\cdot i(a, c_i)\cdot i(b, c_i) \right|.
\end{equation}
Naturally, the same bound applies for negative multitwists. For a proof of this inequality see \cite[Proposition 3.4]{farb_primer_2012}.

The  formula above requires the multitwist to be either positive or negative. However, Theorem \ref{thm1} deals with multitwists that (possibly) have mixed signs. The next bound on intersection number was proved by Ivanov for general multitwists  (see  \cite[Proposition 4.2]{ivanov_subgroups_1992}).

\begin{proposition}\label{prop:formulaIvanov}
Consider two curves $a,b$ and the multitwist $\tau_C = \delta_{c_1}^{n_1} \dots \delta_{c_k}^{n_k}$ with $n_i\in\mathbb{Z}$. Then, 
\[
i(a, b)\geq -i(a, \tau_C(b)) +\sum_{j=1}^k \widetilde{n}_j \cdot i(a,c_j)\cdot i(b, c_j),
\]
where $\widetilde{n}_j = \max{\{|n_j|-2, \,0\}}$.
\end{proposition}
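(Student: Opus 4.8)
The plan is to prove the equivalent inequality
\[
i(a,\tau_C(b))\ \geq\ \sum_{j=1}^{k}\widetilde n_j\,i(a,c_j)\,i(b,c_j)\ -\ i(a,b),
\]
noting that when the right-hand side is negative there is nothing to prove, so one may as well drop the truncation and treat the coefficients as $|n_j|-2$. I would prove this by a direct count of intersection points inside annular neighbourhoods of the curves of $C$.

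First I would fix a taut configuration: choose representatives of $a$, $b$ and of the multicurve $C=c_1\cup\cdots\cup c_k$ that are pairwise in minimal position, together with pairwise disjoint annular neighbourhoods $N_1,\dots,N_k$ of $c_1,\dots,c_k$, arranged so that $a\cap N_j$ consists of exactly $i(a,c_j)$ arcs each crossing the core once, $b\cap N_j$ of exactly $i(b,c_j)$ such arcs, and---using the freedom in the choice of minimal position of $a\cup b$---so that $a$ and $b$ are disjoint inside every $N_j$. Since the twist maps $\delta_{c_j}^{n_j}$ have disjoint supports, $\tau_C(b)$ is then represented by the curve $b'$ which agrees with $b$ outside $\bigcup_j N_j$ and, inside each $N_j$, replaces every spanning arc of $b$ by an arc spiralling $n_j$ times around $c_j$; moreover these spirals may be taken tight, so that no two arcs of $a$ and $b'$ cobound a bigon contained in a single $N_j$.

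Now I count $\#(a\cap b')$. Outside $\bigcup_j N_j$ we have $b'=b$ and all of $a\cap b$ lies there, contributing exactly $i(a,b)$. Inside a fixed $N_j$, an arc of $a$ (winding $0$) and an arc of $b'$ (winding $n_j$) must meet at least $|n_j|-1$ times, because their windings differ by $n_j$; hence $a$ meets $b'$ at least $(|n_j|-1)\,i(a,c_j)\,i(b,c_j)$ times inside $N_j$, and summing gives
\[
\#(a\cap b')\ \geq\ i(a,b)\ +\ \sum_{j=1}^{k}(|n_j|-1)\,i(a,c_j)\,i(b,c_j).
\]
It then remains to pass from this particular representative $b'$ to one in minimal position with $a$, i.e.\ to bound the number of intersection points destroyed when innermost bigons are removed. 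By construction there are no bigons inside a single $N_j$ and none outside $\bigcup_j N_j$ (where $a$ and $b$ are already taut), so every bigon between $a$ and $b'$ must straddle some component of $\partial(\bigcup_j N_j)$; a bookkeeping argument on a maximal disjoint family of such bigons---using that each of them enters a collar along an arc of a spiral and so consumes some of the $2i(a,c_j)$, resp.\ $2i(b,c_j)$, points in which $a$, resp.\ $b'$, crosses $\partial N_j$---should bound the number of destroyed intersection points by $\sum_j i(a,c_j)\,i(b,c_j)$, which turns each coefficient $|n_j|-1$ into $|n_j|-2$ and yields the claim.

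The arc count is routine; the genuinely delicate point is the last step. The subtlety is that a bigon between $a$ and the spiralled curve $b'$ need not stay inside a single collar---it can weave in and out of several of the $N_j$---and one must both exclude uncontrolled families of such ``long'' bigons and extract the precise constant $2$ (rather than $1$) in $\widetilde n_j=\max\{|n_j|-2,0\}$. This is exactly where Ivanov's argument does its real work, and where I would expect to spend most of the effort; note also that for \emph{positive} or \emph{negative} $\tau_C$ the naive count already produces the stronger bound with $|n_j|$ in place of $\widetilde n_j$ (compare \eqref{formulaPositiveTwists}), so the entire difficulty is concentrated in the mixed-sign interaction between the collars.
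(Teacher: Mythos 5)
The paper does not prove this proposition at all: it is quoted verbatim from Ivanov (\cite[Proposition 4.2]{ivanov_subgroups_1992}), so your attempt has to stand on its own, and as written it does not. The set-up and the arc count inside the annuli are fine and are indeed how such bounds are proved, but the entire content of the proposition is concentrated in the step you explicitly defer: bounding the number of intersection points destroyed when $a$ and the spiralled representative $b'$ are pushed into minimal position. You correctly identify that the dangerous bigons are those straddling $\partial(\bigcup_j N_j)$ and possibly weaving through several collars, but you only assert that a ``bookkeeping argument \dots should bound'' the loss by $\sum_j i(a,c_j)\,i(b,c_j)$, and you say yourself that this is where you would expect to spend most of the effort. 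A proof that states its hardest step as a hope is a sketch, not a proof; in particular you give no argument ruling out a single long bigon whose removal cancels intersection points in many annuli at once, nor any accounting that attributes each destroyed point to a specific crossing of $\partial N_j$ without double counting. This is precisely the part of Ivanov's argument that produces the constant $2$.

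There is also a logical slip at the outset. You propose to ``drop the truncation and treat the coefficients as $|n_j|-2$,'' but since $\widetilde{n}_j=\max\{|n_j|-2,0\}\geq |n_j|-2$, the inequality with coefficients $|n_j|-2$ is \emph{weaker} than the stated one whenever some $|n_j|\leq 1$, so proving it does not yield the proposition. (For instance, one curve $c_j$ with $|n_j|=1$ and large $i(a,c_j)\,i(b,c_j)$ makes the untruncated right-hand side very negative while the truncated one stays positive.) The truncation has to emerge from the argument itself, e.g.\ by showing that the intersection points lost inside $N_j$ never exceed what your count placed there, which is exactly the per-annulus bookkeeping you have not carried out. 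To repair the write-up, either supply the bigon-control argument in full or simply cite Ivanov, as the paper does.
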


The proof given by Ivanov hides another formula, which will be extensively used in this article. Before introducing this formula, we need one more piece of notation.

Let $a$ be a curve and $\tau_C= \delta_{c_1}^{n_1} \dots \delta_{c_k}^{n_k}$ a multitwist. The intersection points of $a$ with curves in $C$ define a partition $a=a_0 \cup a_1 \cup \dots \cup a_m$, where each $a_i$ is an arc between consecutive intersection points. Since we have an arc for each intersection point, there is a total of $m+1=\sum_{j=1}^k i(a, \,c_j)$ arcs. Now, for each  $a_i$  set $x_i=1$  if the subarc $a_i$ intersects  $c_k, c_l \in C$ and $n_k \cdot n_l > 0$. If  $n_k \cdot n_l < 0$,  set $x_i=0$. We define the function
\[ 
X(a, \,\tau_C) = \sum_{i=1}^m x_i.
\]
The function  $X(a, \, \tau_C)$  \highlight{is non-negative and it } does not depend on the choice of (minimally intersecting) representatives. This is a consequence of the next proposition.

Although Ivanov does not explicitly state the following result, it follows immediately from the first part of his proof of \cite[Proposition 4.2]{ivanov_subgroups_1992}. 

\begin{proposition}[Ivanov's  hidden formula]\label{prop:formulaEscondidaIvanov}
    Consider a curve $a$ and the multitwist $\tau_C = \delta_{c_1}^{n_1} \dots \delta_{c_k}^{n_k}$ with $n_i\in\mathbb{Z}$. Then,
\[
i(a, \tau_C\cdot a) = \sum_{j=1}^k (|n_j|i(a, c_j)-1)i(a, c_j)+X(a, \tau_C).
\]
\end{proposition}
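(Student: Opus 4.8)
The plan is to count the self-intersections of $a$ with $\tau_C \cdot a$ by looking at a minimally-intersecting representative of $a$ and tracking what the multitwist does locally near each curve $c_j \in C$. Fix representatives of $a$ and of all $c_j$ that pairwise intersect minimally, and let $A_j$ be a thin annular neighbourhood of $c_j$; since the $c_j$ are pairwise disjoint we may take the $A_j$ disjoint. Outside $\bigcup_j A_j$ the map $\tau_C$ is the identity, so the arcs of $a$ lying outside the annuli are unchanged. Inside $A_j$, the curve $a$ crosses as $i(a,c_j)$ parallel arcs, and applying $\delta_{c_j}^{n_j}$ replaces each such crossing arc by an arc that wraps $|n_j|$ times around the core $c_j$. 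The claim is then a bookkeeping identity: the total number of intersection points between $a$ and $\tau_C\cdot a$, once we argue the resulting picture is already minimal, equals the contributions from pairs of wrapped strands inside each annulus plus a correction term coming from arcs of $a$ that run between two different annuli.

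First I would handle the single-annulus contribution. Restricting attention to one $A_j$ with $p := i(a,c_j)$ strands of $a$ passing through, the image strands each wind $|n_j|$ times; two such families of $|n_j|$-wound strands, one from $a$ and one from $\tau_C\cdot a$, meet in roughly $|n_j|\,p^2$ points, but one must subtract the crossings that can be removed by an isotopy pushing the endpoints apart — this is exactly the classical computation behind inequality \eqref{formulaPositiveTwists}, and it yields the summand $(|n_j|\,i(a,c_j) - 1)\,i(a,c_j)$ for each $j$. (The $-1$ reflects that $p$ strands entering and leaving an annulus in the same cyclic order contribute $p^2 - p = p(p-1)$ "internal" crossings among themselves, plus $|n_j|$ extra full twists' worth for each ordered pair.) Summing over $j$ produces the main term $\sum_j (|n_j| i(a,c_j) - 1) i(a,c_j)$.

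The remaining term $X(a,\tau_C)$ accounts for the extra intersections created along an arc $a_i$ of $a$ that connects annulus $A_k$ to annulus $A_l$: if $n_k$ and $n_l$ have the same sign, the twisting at the two ends "cooperates" and forces one additional, unremovable crossing of $a_i$ with the corresponding image arc; if they have opposite signs, the wrappings cancel and no extra crossing is forced. This is precisely the definition of $x_i$, so the total correction is $X(a,\tau_C) = \sum_i x_i$. Assembling the three ingredients gives the stated formula, and the fact that $X(a,\tau_C)$ is independent of the chosen minimal representatives follows a posteriori because every other quantity in the identity is.

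The main obstacle, and the step deserving the most care, is proving that the configuration obtained after applying $\tau_C$ is already in minimal position — i.e. that no bigon exists between $a$ and $\tau_C\cdot a$ — so that the point count genuinely computes $i(a,\tau_C\cdot a)$ rather than merely bounding it. The honest way to do this is to quote Ivanov's argument: he establishes exactly this minimality in the first part of the proof of \cite[Proposition 4.2]{ivanov_subgroups_1992} via a careful bigon-chasing argument on the arcs crossing the annuli, and the displayed identity is what his counting produces before he discards the $X$-term to get the inequality of Proposition \ref{prop:formulaIvanov}. I would therefore organize the write-up so that the geometric set-up (annuli, strands, the sign-dependent behaviour of connecting arcs) is made explicit, and then invoke Ivanov's no-bigon lemma to justify that the resulting count is the geometric intersection number.
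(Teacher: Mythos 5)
Your proposal is correct and matches what the paper does: the paper offers no independent proof of Proposition \ref{prop:formulaEscondidaIvanov}, but simply observes that the identity is what the first part of Ivanov's proof of \cite[Proposition 4.2]{ivanov_subgroups_1992} computes before the $X$-term is discarded, and your annulus-by-annulus count together with the appeal to Ivanov's no-bigon argument is exactly that reduction. The geometric bookkeeping you describe (the $(|n_j|\,i(a,c_j)-1)\,i(a,c_j)$ term per annulus and the sign-dependent contribution of connecting arcs giving $X(a,\tau_C)$) is consistent with the paper's Example \ref{ex2}, so nothing further is needed.
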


The next example illustrates how to compute $X(a,\,\, \tau_C)$ and $i(a, \tau_C\cdot a) $ using Proposition \ref{prop:formulaEscondidaIvanov}.

\begin{example}\label{ex2}
	Consider the curves  in Figure \ref{fig:ejemplox} and let $\tau_C=\delta_{c_1}^2\delta_{c_2}^{-1}\delta_{c_3}$. We  compute $X(a,\,\, \tau_C)$ and $i(a, \tau_C\cdot a) $.
    
    The intersection points induce the partition  $a=a_0\cup a_1 \cup a_2 \cup a_3$. Since $a_0$ intersects $c_1, \,c_2$ and they have powers $2\cdot -1<0$,  then $x_0=0$. Since $a_1$ intersects only $c_1$ and $2\cdot 2>0$, then $x_1=1$. Similarly, we compute $x_2=1$ and $x_3=0$. It follows that $X(a,\,\,\tau_C) = \sum_{i=0}^3 x_i = 2$ and we obtain $i(a, \tau_C \cdot a) = 8$ using Proposition  \ref{prop:formulaEscondidaIvanov}.

\begin{figure}[h]
\labellist
\small\hair 2pt
\pinlabel {$c_1$} [ ] at 300 340
\pinlabel {$a_2$} [ ] at 155 266
\pinlabel {$a_1$} [ ] at 421 268
\pinlabel {$c_3$} [ ] at 41 148
\pinlabel {$a_3$} [ ] at 156 170
\pinlabel {$c_2$} [ ] at 305 78
\pinlabel {$a_0$} [ ] at 408 155
\pinlabel {$c_1$} [ ] at 575 131
\endlabellist
\centering       
\includegraphics[width=0.4\linewidth]{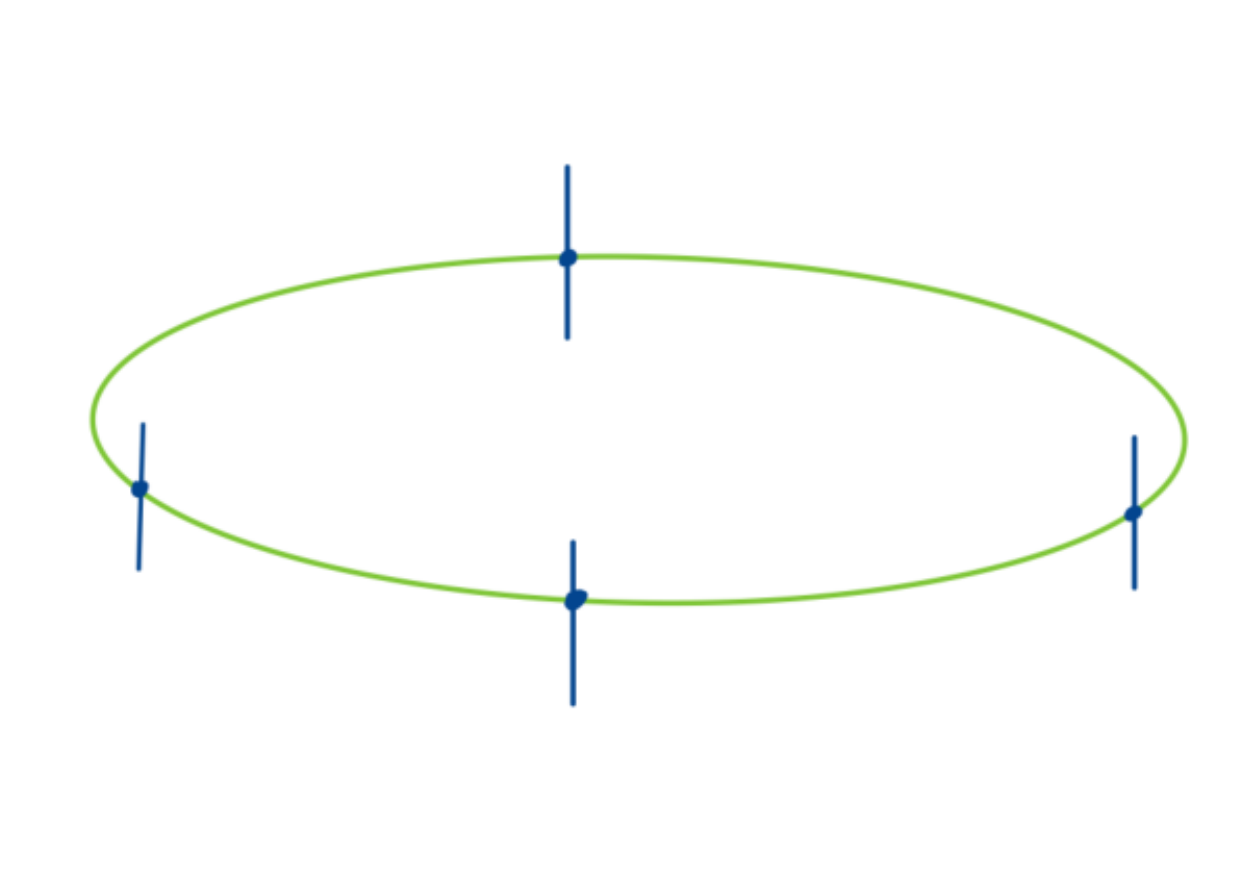}
\caption{Curve $a=\bigcup_{i=0}^3 a_i$ and the intersecting curves $c_1,\,c_2,\,c_3$.}
\label{fig:ejemplox}
\end{figure}

\end{example}

\subsection{Algebraic intersection formulas}

The action of a Dehn twist on the homology of a curve is described in \cite[Proposition 6.3]{farb_primer_2012} by the  formula
\[ \delta_c^n\cdot [a]  = [a]+ n\cdot \ihat([a], \,[c])\cdot [c].\]Recall that $\mcg(S)$ acts linearly on the homology of $S$, thus we may iteratively use this equality  to obtain a formula for the action of a multitwist. Let $\tau_C = \delta_{c_1}^{n_1} \dots \delta_{c_k}^{n_k}$ be a multitwist and $[a]$ the homology class of an oriented curve, it follows that
\begin{equation}\label{homologicalActionMultitwist}
	\tau_C \cdot [a] = [a]+\sum_{i=1}^kn_k\cdot \ihat([a], \,[c_i])\cdot[c_i].   
\end{equation}
From Equation \ref{homologicalActionMultitwist} and the bilinearity of  $\ihat(\cdot, \cdot)$, we derive the following result:

\begin{lemma}\label{lemaInterseccionAlg}
	Consider $a$ and $b$ two oriented curves on $S$.  Let $\tau_C = \delta_{c_1}^{n_1} \dots \delta_{c_k}^{n_k}$ be a multitwist in $\mcg(S)$. Then,  
\[
\ihat(\tau_C\cdot [a], \,[b]) = \ihat([a], \,[b]) + \sum_{i=1}^k n_k \cdot \ihat([a], \,[c_i])\cdot \ihat([c_i],\,[b]).    
\]
\end{lemma}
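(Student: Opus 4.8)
The plan is to obtain the identity by a direct substitution followed by an expansion using bilinearity, with no auxiliary constructions needed. First I would recall Equation \ref{homologicalActionMultitwist}, which already records
\[
\tau_C\cdot[a] = [a] + \sum_{i=1}^k n_i\cdot\ihat([a],\,[c_i])\cdot[c_i].
\]
The key structural fact to invoke is that $\mcg(S)$ acts on $H_1(S)$ by automorphisms preserving $\ihat(\cdot,\cdot)$, so that $\ihat$ is a well-defined bilinear form on homology; in particular it is additive and pulls out scalars in its first argument.

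Next I would substitute this expression for $\tau_C\cdot[a]$ into $\ihat(\tau_C\cdot[a],\,[b])$ and distribute over the sum, extracting the scalars $n_i\cdot\ihat([a],\,[c_i])$ from the first slot. This yields
\[
\ihat(\tau_C\cdot[a],\,[b]) = \ihat([a],\,[b]) + \sum_{i=1}^k n_i\cdot\ihat([a],\,[c_i])\cdot\ihat([c_i],\,[b]),
\]
which is precisely the claimed formula. The computation is a single line once Equation \ref{homologicalActionMultitwist} is in hand.

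There is essentially no obstacle in this argument; the only point worth a remark is that Equation \ref{homologicalActionMultitwist} itself uses that the curves $c_1,\dots,c_k$ are pairwise disjoint, so $\ihat([c_i],\,[c_j])=0$ for $i\neq j$ and the iteration of the single-twist formula $\delta_c^n\cdot[a]=[a]+n\cdot\ihat([a],\,[c])\cdot[c]$ collapses to the displayed single sum without cross terms. Since the excerpt already states Equation \ref{homologicalActionMultitwist}, I would simply cite it and perform the one-line expansion above.
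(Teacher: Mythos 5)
Your proposal is correct and follows exactly the route the paper takes: the lemma is derived by substituting Equation \ref{homologicalActionMultitwist} into the first slot of $\ihat(\cdot,\cdot)$ and expanding by bilinearity. Your remark about the disjointness of the $c_i$ collapsing the iteration is a sensible justification of Equation \ref{homologicalActionMultitwist} itself, and note that the index $n_k$ in the displayed sum of the lemma should read $n_i$, as in your version.
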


\section{Proof of Main Result}\label{secIntoTheProof}
Consider two braided multitwists
\begin{align*}
	&\tau_A = \delta_{a_1}^{n_1} \dots \delta_{a_k}^{n_k}, \\ 
	&\tau_B=\delta_{b_1}^{m_1}\dots\delta_{b_l}^{m_l}.
\end{align*}
Recall the subindices $A,B$ denote the sets $A=\{a_1,\dots, a_k\}$ and $B=\{b_1,\dots,b_l\}$. The first observation is that the braid relation implies
\begin{equation*}
	(\tau_A\tau_B) \tau_A (\tau_A\tau_B)^{-1}=\tau_B.
\end{equation*} In particular, for each curve $a_i$ there exists a curve  $b_j$ such that $\tau_A\tau_B \cdot a_i=b_j$ and $n_i=m_j$. Up to re-indexing $B$, we may assume  $\tau_A\tau_B\cdot a_i=b_i$ and $n_i=m_i$. Taking this into account,  we  write 
\begin{align*}
	&\tau_A = \delta_{a_1}^{n_1} \dots \delta_{a_k}^{n_k}, \\ 
	&\tau_B=\delta_{b_1}^{n_1}\dots\delta_{b_k}^{n_k}.
\end{align*}
\begin{remark}\label{rmk:accionenai}
	We emphasize that re-indexing  ensures  $\tau_A\tau_B \cdot a_i=b_i$. Also, we know $\tau_B\tau_A\cdot b_i \in A$, but \emph{a priori} $\tau_B\tau_A\cdot b_i$ could be different from $a_i$.
\end{remark}

As a  first step towards Theorem \ref{thm1}, the next lemma shows that braided multitwists decompose as a common part and two braided multitwists  sharing no curve.
\begin{lemma}\label{lemma:ReduceToNoCommonCurve}
	Let $S$ be an orientable surface. If $\tau_{A},\,\tau_{B}\in \mcg(S)$ are  two braided multitwists, then $ \tau_A = \tau_{A'}\tau_C$  and $ \tau_B = \tau_{B'}\tau_C$, where $\tau_C$ is a common multitwist and $\tau_{A'},\tau_{B'}$ are two braided multitwists sharing no curves. 
\end{lemma}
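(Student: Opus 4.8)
The plan is to identify the common multitwist $\tau_C$ as the product of the shared Dehn twists, but to do so we first need to control the \emph{powers} of the shared curves. So the first step is: suppose $a_i = b_j$ for some $i, j$; I want to show that their powers $n_i$ and $m_j$ must agree. To see this, recall from the re-indexing discussion that the set $A$ and $B$ are related by $\tau_A\tau_B\cdot a_i = b_i$ with matching powers $n_i = m_i$; since $\tau_A\tau_B$ preserves intersection numbers and the conjugation relation $(\tau_A\tau_B)\tau_A(\tau_A\tau_B)^{-1} = \tau_B$ holds, a curve common to $A$ and $B$ with two different powers on the two sides would contradict this structure once we track what $\tau_A\tau_B$ does to it. The cleanest way to argue this is: if $c \in A \cap B$, then by Proposition~\ref{prop:formulaEscondidaIvanov} or more elementarily by the commutation relations, a common curve is fixed by both $\tau_A$ and $\tau_B$ (Dehn twists along disjoint curves commute and fix each other's curves), hence is fixed by $\tau_A\tau_B$, which by the re-indexing forces $c = a_i$ and $c = b_i$ to correspond, giving $n_i = m_i$.

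The second step is to define $C := A \cap B$ and $\tau_C := \prod_{c_i \in C}\delta_{c_i}^{n_i}$ using the common power established above, and to set $\tau_{A'} := \tau_A \tau_C^{-1}$, $\tau_{B'} := \tau_B \tau_C^{-1}$. These are multitwists along $A \setminus C$ and $B \setminus C$ respectively, which by construction share no curves. Here one must check that the Dehn twists in $\tau_C$ commute with everything in $\tau_{A'}$ and $\tau_{B'}$: this is immediate since all curves involved are pairwise disjoint (they all belong to the disjoint family $A$, respectively $B$), so $\tau_A = \tau_{A'}\tau_C = \tau_C\tau_{A'}$ and similarly for $B$.

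The third step is to verify that $\tau_{A'}$ and $\tau_{B'}$ are still braided. Since $\tau_C$ commutes with $\tau_{A'}$ and we must also argue $\tau_C$ commutes with $\tau_{B'}$ — this needs a moment of care, because a curve of $C$ could a priori intersect a curve of $B \setminus C$. But $C \subseteq B$ and $B$ is a disjoint family, so every curve of $C$ is disjoint from every curve of $B\setminus C$, hence $\tau_C$ commutes with $\tau_{B'}$ as well. Then substituting $\tau_A = \tau_{A'}\tau_C$ and $\tau_B = \tau_{B'}\tau_C$ into $\tau_A\tau_B\tau_A = \tau_B\tau_A\tau_B$ and pushing all the central factors $\tau_C$ to one side, we get $\tau_{A'}\tau_{B'}\tau_{A'}\,\tau_C^3 = \tau_{B'}\tau_{A'}\tau_{B'}\,\tau_C^3$, and cancelling $\tau_C^3$ yields $\tau_{A'}\tau_{B'}\tau_{A'} = \tau_{B'}\tau_{A'}\tau_{B'}$, as desired.

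The main obstacle I anticipate is Step 1 — pinning down that a shared curve carries the same power in $\tau_A$ and in $\tau_B$. The re-indexing fixed up the correspondence coming from conjugation by $\tau_A\tau_B$, but it does not \emph{a priori} tell us that a curve literally lying in $A \cap B$ is one of the re-indexed pairs $a_i = b_i$; one has to rule out, e.g., $a_1 = b_2$ with $n_1 \neq m_2$. The resolution should come from the observation in Remark~\ref{rmk:accionenai} together with the fact that a curve in $A\cap B$ is fixed by both $\tau_A$ and $\tau_B$ (disjointness), hence fixed by $\tau_A\tau_B$; comparing with $\tau_A\tau_B\cdot a_i = b_i$ then forces the shared curve to match itself under the indexing and its two powers to coincide. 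Everything after that is the routine centrality-and-cancellation argument sketched above.
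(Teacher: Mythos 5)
Your proposal is correct and follows essentially the same route as the paper: you identify that a curve in $A\cap B$ is fixed by $\tau_A\tau_B$ (being disjoint from everything in both families), so under the re-indexed correspondence $\tau_A\tau_B\cdot a_i=b_i$ it must satisfy $a_i=b_i$ with matching powers, and then the centrality-and-cancellation step recovers the braid relation for $\tau_{A'},\tau_{B'}$ exactly as in the paper's proof. No gaps.
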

\begin{proof}
	 Let $C= A\cap B$ be the set of common curves of $A$ and $B$.  If $C$ is empty, then we  take $\tau_C=\text{id}$ and we are done.  If $C$ is non empty: 
	 
	 We may assume $\tau_A = \delta_{a_1}^{n_1} \dots \delta_{a_k}^{n_k}$, $\tau_B=\delta_{b_1}^{n_1}\dots\delta_{b_k}^{n_k}$ such that $\tau_A\tau_B\cdot a_i  = b_i$ and $C=\{a_l,\dots, a_k\}$. Note that every $ a_i\in C$ is a curve in $B$, therefore $a_i$ is disjoint from every  curve in $B$ and  so $b_i = \tau_A\tau_B\cdot a_i = \tau_A\cdot a_i = a_i$. That is, we may also write $C= \{b_l,\,\dots, \,b_{k}\}$. Now, we define 
	\begin{align*}
		&\tau_{A'}=\delta_{a_1}^{n_1} \dots \delta_{a_{l-1}}^{n_{l-1}},\\
		&\tau_{B'}=\delta_{b_{1}}^{n_{1}} \dots \delta_{b_{l-1}}^{n_{l-1}}
	\end{align*} 
	and \[ \tau_C = \delta_{a_l}^{n_l}\dots \delta_{a_k}^{n_k}. \]
	
	By definition $\tau_A = \tau_{A'}\tau_C$ and $\tau_B = \tau_{B'}\tau_C$, where $\tau_C$ is a common multitwist and $A', B'$ share no curves. To finish observe that  $\tau_{A'},\tau_{B'}$ are braided multitwists.  Indeed, the relation  $\tau_A\tau_B\tau_A = \tau_B\tau_A \tau_B$ implies 
	\[  \tau_{A'}\tau_C\tau_{B'}\tau_C\tau_{A'}\tau_C = \tau_{B'}\tau_C\tau_{A'}\tau_C \tau_{B'}\tau_C.\]
	Since $\tau_C$ commutes with both $\tau_{A'}$ and $\tau_{B'}$, the equation $\tau_{A'}\tau_{B'}\tau_{A'}= \tau_{B'}\tau_{A'}\tau_{B'}$ follows immediately. 
\end{proof}

It is worth noting that the previous lemma simplifies the proof of  Theorem \ref{thm1} to the case where  the multitwists share no curves. A preliminary observation on braided multitwists sharing no curves is the following:

\begin{lemma}\label{lem:IntersectAtLeastOne}
		Let $S$ be an orientable surface and consider two braided multitwists $\tau_{A},\,\tau_{B}\in \mcg(S)$. If $A$ and $B$ share no curves, then every  $a_i\in A$ intersects at least one curve in $B$.
\end{lemma}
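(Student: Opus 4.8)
The plan is to argue by contradiction, exploiting the normalization of indices established at the start of Section~\ref{secIntoTheProof}. Recall that the braid relation yields $(\tau_A\tau_B)\tau_A(\tau_A\tau_B)^{-1} = \tau_B$, and that after re-indexing $B$ we may write $\tau_A = \delta_{a_1}^{n_1}\cdots\delta_{a_k}^{n_k}$, $\tau_B = \delta_{b_1}^{n_1}\cdots\delta_{b_k}^{n_k}$ with $\tau_A\tau_B\cdot a_i = b_i$ for every $i$. So suppose, for contradiction, that some $a_i\in A$ is disjoint from every curve of $B$.

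First I would note that $\tau_B$ fixes $a_i$: it is a product of Dehn twists along curves each disjoint from $a_i$, and a Dehn twist along a curve disjoint from $a_i$ acts trivially on $a_i$. Similarly $\tau_A$ fixes $a_i$, since $a_i$ is one of the pairwise disjoint curves $a_1,\dots,a_k$ and hence every $\delta_{a_j}$ fixes it. Therefore $b_i = \tau_A\tau_B\cdot a_i = \tau_A\cdot a_i = a_i$, so that $a_i = b_i\in A\cap B$, contradicting the assumption that $A$ and $B$ share no curves. Hence every $a_i$ must intersect at least one curve of $B$.

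I do not expect any genuine obstacle here; the single point requiring care is that the argument depends on having already normalized the indexing so that $\tau_A\tau_B\cdot a_i = b_i$ (equivalently, on the conjugacy relation extracted from the braid relation) — without this, $\tau_A\tau_B\cdot a_i$ is only known to be \emph{some} curve of $B$, not specifically $b_i$, and the contradiction would not be available. Running the same argument with the roles of $A$ and $B$ interchanged shows, symmetrically, that every $b_i\in B$ intersects at least one curve of $A$.
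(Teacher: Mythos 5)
Your argument is correct and is essentially identical to the paper's proof: both normalize the indexing so that $\tau_A\tau_B\cdot a_i = b_i$, observe that disjointness forces $b_i = \tau_A\tau_B\cdot a_i = \tau_A\cdot a_i = a_i$, and derive the contradiction $a_i\in A\cap B$. Your remark about the necessity of the prior re-indexing is a fair point of care, but it introduces no deviation from the paper's route.
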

\begin{proof}
	Take $\tau_A = \delta_{a_1}^{n_1} \dots \delta_{a_k}^{n_k}$ and $\tau_B=\delta_{b_1}^{n_1}\dots\delta_{b_k}^{n_k}$, so that $\tau_A\tau_B\cdot a_i  = b_i$. 
	
	Seeking a contradiction, suppose there is a curve $a_i\in A$  disjoint from every curve in $B$. Then, $b_i =\tau_A\tau_B\cdot a_i  = \tau_A\cdot a_i = a_i $ and $a_i\in B$. But this is not possible since $A$ and $B$ share no curves. 
\end{proof}

The goal of the next section is to understand how the curves in $\tau_A$ and $\tau_B$ may intersect each other. 

\subsection{Curves in braided multitwists}\label{subsec:clasificacion}

Let $\tau_A, \tau_{B}\in \mcg(S)$ be two braided multitwists. In this section we describe how a curve  $a\in A$ may intersect the curves in $B$.  This description will come as a consequence of Proposition \ref{prop:formulaEscondidaIvanov}.

As in Section \ref{secIntoTheProof}, we write $\tau_A = \delta_{a_1}^{n_1} \dots \delta_{a_k}^{n_k}$ and $\tau_B=\delta_{b_1}^{n_1}\dots\delta_{b_k}^{n_k}$, so that $\tau_A\tau_B\cdot a_i  = b_i$. Using Ivanov's formula, we deduce

\begin{align}
		\label{ecIvanov}i(a_i,b_i) & = i(a_i, \tau_A\tau_B\cdot a_i) 	 \\ 
	& = i(a_i, \tau_B\cdot a_i)\nonumber \\ 
	& = \sum_{j=1}^k (|n_j|i(b_j,a_i)-1)i(b_j,a_i)+X(a_i, \tau_B)\nonumber
\end{align}
In Table \ref{table} we list the possible \highlight{(non-negative) } values of $i(a_i,b_i)$, $X(a, \,\tau_B)$ and $|n_i|$ for which Equation (\ref{ecIvanov}) is satisfied. Notice each curve $a_i\in A$ is of a single type in Table \ref{table}.  We collect these facts in the following proposition:
\newpage

\begin{proposition}\label{propClasificacion}
	Let $S$ be an orientable surface and consider two braided multitwists $\tau_A,\tau_B\in \mcg(S)$. If
	\begin{align*}
		\tau_A = \delta_{a_1}^{n_1} \dots \delta_{a_k}^{n_k}, \;\;\;
		\tau_B=\delta_{b_1}^{n_1}\dots\delta_{b_k}^{n_k}
	\end{align*}
	and  $\tau_A\tau_B\cdot a_i  = b_i$. Then, every curve $a_i \in A$ is of Type 1, 2, 3, 4 or 5 (see Table \ref{table}). 
\end{proposition}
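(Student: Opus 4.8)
The plan is to analyze Equation (\ref{ecIvanov}) as a Diophantine constraint on the intersection data of a fixed curve $a_i \in A$. Write $x = i(a_i, b_i)$, and for each $j$ set $p_j = i(a_i, b_j) \geq 0$. Since $\tau_A \tau_B \cdot a_i = b_i$, the left side of (\ref{ecIvanov}) equals $x = p_i$. The right side is $\sum_{j=1}^k (|n_j| p_j - 1) p_j + X(a_i, \tau_B)$, which I will split as the $j=i$ term plus the contribution of the remaining indices. So (\ref{ecIvanov}) reads
\[
p_i = (|n_i| p_i - 1) p_i + \sum_{j \neq i} (|n_j| p_j - 1) p_j + X(a_i, \tau_B),
\]
i.e.
\[
p_i (2 + p_i - |n_i| p_i) = \sum_{j \neq i} (|n_j| p_j - 1) p_j + X(a_i, \tau_B).
\]
The key observation is that the right-hand side is \emph{non-negative}: each term $(|n_j| p_j - 1) p_j$ is non-negative because $p_j$ is a non-negative integer and $|n_j| \geq 1$, so $|n_j| p_j - 1 \geq p_j - 1 \geq -1$, and when $p_j \geq 1$ this gives $(|n_j| p_j - 1) p_j \geq 0$ (the product vanishes if $p_j = 0$), while $X(a_i, \tau_B) \geq 0$ by the remark following its definition. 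Hence the left-hand side $p_i(2 + p_i - |n_i| p_i) \geq 0$ as well.

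Next I would perform the elementary case analysis forcing the left side to be non-negative. If $p_i = 0$, then both sides are zero, which forces $p_j = 0$ or $|n_j| p_j = 1$ for all $j \neq i$, and $X(a_i, \tau_B) = 0$; this gives one family of types. If $p_i \geq 1$, non-negativity of the left side requires $2 + p_i - |n_i| p_i \geq 0$, i.e. $p_i(|n_i| - 1) \leq 2$. The integer solutions of this are: $|n_i| = 1$ (any $p_i$, left factor equals $2$); $|n_i| = 2$ with $p_i \in \{1, 2\}$ (left factor $1$ or $0$); and $|n_i| = 3$ with $p_i = 1$ (left factor $0$). In each sub-case I read off the forced value of the left-hand side and then use the equation to constrain $\sum_{j \neq i}(|n_j| p_j - 1)p_j + X(a_i, \tau_B)$ — for instance when the left factor is $0$ every term on the right must vanish, pinning down all $p_j$ and $X$. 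Matching each sub-case against Table \ref{table} shows it coincides with one of Types 1--5, and since the cases are exhaustive and mutually exclusive in the value of $(p_i, |n_i|)$, every $a_i$ has a unique type.

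The main obstacle — really the only non-trivial point — is verifying that the list in Table \ref{table} is genuinely exhaustive: one must be careful that the case split on $(|n_i|, p_i)$ above captures every way (\ref{ecIvanov}) can hold, and that within the "$|n_i| = 1$, $p_i$ arbitrary" branch the further constraint from $\sum_{j\neq i}(|n_j|p_j - 1)p_j + X(a_i,\tau_B) = 2p_i$ still yields only finitely many intersection patterns (so that it fits into a bounded table). I would handle this by noting that each $p_j \geq 2$ already contributes at least $p_j(|n_j| p_j - 1) \geq 2 \cdot 1 = 2$... actually at least $2(2-1)=2$, and more if $|n_j|\geq 2$, so the number of indices $j$ with $p_j \geq 1$ and the sizes of the $p_j$ are tightly bounded in terms of $p_i$; combined with $0 \le X(a_i,\tau_B)$ this leaves only the finitely many configurations recorded in the table. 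Once exhaustiveness is confirmed, the proposition follows immediately from the definition of the types.
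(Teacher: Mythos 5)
Your overall strategy is exactly the paper's: the paper derives Equation (\ref{ecIvanov}) from Ivanov's hidden formula and then simply asserts that Table \ref{table} lists all non-negative solutions, so making the Diophantine case analysis explicit is the right way to justify the proposition. However, there is an algebra slip at the decisive step. Isolating the $j=i$ term gives $p_i - (|n_i|p_i-1)p_i = p_i\,(2-|n_i|\,p_i)$, \emph{not} $p_i\,(2+p_i-|n_i|\,p_i)$. The correct identity
\[
p_i\,(2-|n_i|\,p_i) \;=\; \sum_{j\ne i}(|n_j|\,p_j-1)\,p_j + X(a_i,\tau_B) \;\ge\; 0
\]
forces, when $p_i\ge 1$, that $|n_i|\,p_i\le 2$, i.e. $(p_i,|n_i|)\in\{(1,1),(1,2),(2,1)\}$; together with $p_i=0$ these are precisely the five rows of Table \ref{table}, the pair $(1,1)$ splitting into Types $3$ and $4$ according to whether the right-hand side (which then equals $1$) is realized by $X(a_i,\tau_B)=1$ or by a single $b_l$ with $p_l=1$ and $|n_l|=2$, and the left-hand side being $0$ in the cases $(1,2)$ and $(2,1)$ forcing every term on the right to vanish, as you describe.

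Your version of the non-negativity condition, $p_i(|n_i|-1)\le 2$, instead admits $(p_i,|n_i|)=(1,3)$, $(2,2)$, and $|n_i|=1$ with $p_i\ge 3$ --- configurations that do not appear in Table \ref{table} --- so the case analysis as written does not close and would wrongly suggest the table is incomplete. The ``main obstacle'' you flag at the end (taming the $|n_i|=1$, $p_i$ arbitrary branch) is a symptom of this slip rather than a genuine difficulty: with the corrected left-hand side, $|n_i|=1$ and $p_i\ge 3$ gives $p_i(2-p_i)<0$, a contradiction, and no further bounding argument is needed. Once the algebra is repaired, your proof is complete and coincides with the paper's (implicit) argument.
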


Notice Proposition \ref{propClasificacion} already provides some restrictions on the curves $A\cup B$. For instance, we immediately know that $i(a,b) \in \{0,1,2\}$ for any two curves $a,b\in A\cup B$.

\renewcommand{\arraystretch}{1.5}
\begin{table}
	\begin{center}
		\rowcolors{2}{lightgray}{}
		\begin{tabular}{c|c|c|c|p{7cm}} 
			\textbf{Type} & $i(a_i, \,b_i)$ & $|n_i|$ & $X(a_i,\,\tau_B)$ & \textbf{Intersecting curves}\\
			\hline
			$1$ & $0$ & - & $0$ & if $i(a_i,b_j) \neq 0$ \highlight{ and $i\ne j$}, then $i(a_i,b_j)=1$ and $|n_j|=1$. \\

			$2$ & $1$ & $2$ & $0$ & if $i(a_i,b_j) \neq 0$ \highlight{ and $i\ne j$}, then $i(a_i,b_j)=1$ and $|n_j|=1$. \\
			
			$3$ & $1$ & $1$ & $1$ & if $i(a_i,b_j) \neq 0$, then $i(a_i,b_j)=1$ and $|n_j|=1$. \\
			
			$4$ & $1$ & $1$ & $0$ & There exists $b_l$ with $i(a_i, \,b_l)=1$ and $|n_l|=2$. Any other $b_j\in B\setminus \{b_l\}$ with $i(a_i,b_j) \ne 0$, satisfies $i(a_i,b_j)=1$ and $|n_j|=1$.\\

			$5$ & $2$ & $1$ & $0$ & if $i(a_i,b_j) \neq 0$ and $i\ne j$, then $i(a_i,b_j)=1$ and$|n_j|=1$. \\
		\end{tabular}
	\end{center}
	\caption{Description of curves $a_i\in A$ in braided multitwists $\tau_A, \tau_B$.}
	\label{table}
\end{table}

\subsection{Reducing braided multitwists}\label{subsec:reduccion}

Lemma \ref{lemma:ReduceToNoCommonCurve} reduces the proof of Theorem \ref{thm1} to the case where braided multitwists $\tau_A,\tau_B$ share no curves. In this section we further simplify $\tau_A,\tau_B$ to the case where every curve in $A$  intersects at least two curves in $B$. To start, the following lemma describes curves $a\in A$ that intersect a single curve in $B$. 

\begin{lemma}\label{lemaDisjuntodetodoSalvoUna}
	Let  $S$ be an orientable surface and $\tau_A,\tau_B\in \mcg(S)$  two braided multitwists. If $a\in A$ intersects only one curve $b\in B$. Then, $b=\tau_A\tau_B\cdot a$, $i(a, b)=1$ and $|n|=1$, where $n$ is the power of $a$ in $\tau_A$. Moreover, $b$ is disjoint from every curve in  $A\setminus \{a\}$.
\end{lemma}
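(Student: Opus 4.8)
The plan is to exploit the classification in Table \ref{table} together with the reindexing $\tau_A\tau_B\cdot a_i = b_i$ from Section \ref{secIntoTheProof}. Suppose $a = a_i$ intersects exactly one curve $b \in B$. First I would argue that $b$ must be the curve $b_i$ paired with $a_i$. Indeed, by Remark \ref{rmk:accionenai} we know $b_i = \tau_A\tau_B \cdot a_i$, and by Lemma \ref{lem:IntersectAtLeastOne} (applied after the reduction of Lemma \ref{lemma:ReduceToNoCommonCurve}, or by a direct argument) $a_i$ is not disjoint from all of $B$; since $a_i$ meets only $b$, we need $i(a_i, b_i) \ne 0$, which forces $b = b_i$. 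In particular $i(a,b) = i(a_i, b_i) \ne 0$, so $a_i$ is of Type $2$, $3$, $4$, or $5$ in the table. Scanning those rows: Type $4$ requires a \emph{second} curve $b_l \ne b_i$ with $i(a_i, b_l) = 1$, contradicting that $a_i$ meets only one curve; Type $5$ and Type $2$ give $i(a_i, b_i) = 2$ and $1$ respectively but with $|n_i| \in \{1,2\}$ — so I still need to rule these out — and Type $3$ gives exactly the desired conclusion $i(a,b) = 1$, $|n_i| = 1$.

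To eliminate Types $2$ and $5$ I would use Ivanov's hidden formula (Proposition \ref{prop:formulaEscondidaIvanov}) directly, as in Equation (\ref{ecIvanov}): since $a_i$ meets only $b_i$,
\[
i(a_i, b_i) = i(a_i, \tau_B \cdot a_i) = (|n_i|\, i(a_i,b_i) - 1)\, i(a_i, b_i) + X(a_i, \tau_B).
\]
For Type $5$ ($i(a_i,b_i) = 2$, $|n_i| = 1$) the right side is $(2-1)\cdot 2 + X = 2 + X \ge 2 = i(a_i,b_i)$ with equality only if $X = 0$; but with $i(a_i,b_i)=2$ there are $2$ subarcs of $a_i$ and both are cut out by the single curve $b_i$ (with $n_i n_i > 0$), so $X(a_i,\tau_B) = 2 \ne 0$, a contradiction. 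For Type $2$ ($i = 1$, $|n_i| = 2$), the formula gives $1 = (2 - 1)\cdot 1 + X = 1 + X$, so $X = 0$, which is consistent; here I instead recall that the table already records $i(a_i, b_i) = 1$ but the \textbf{Intersecting curves} column for Type $2$ only constrains $b_j$ with $j \ne i$, so in principle Type $2$ with a single intersecting curve is not yet excluded by the table alone. The cleanest fix is to observe that if $a_i$ is of Type $2$ and meets only $b_i$, then I can also compute $i(b_i, \tau_A \cdot b_i)$ via the same hidden formula applied to $b_i$ and the multitwist $\tau_A$, and compare with $i(b_i, \tau_B\tau_A \cdot b_i) = i(b_i, a_j)$ for the unique $a_j$ with $\tau_B\tau_A\cdot b_i = a_j$; tracking signs of powers and using $n_i = \pm 2$ produces a contradiction with the braid relation (the asymmetry between $a_i$ being of Type $2$ while its partner cannot be). I would carry this out by a short case analysis on whether $b_i$ meets only $a_i$ or meets other curves of $A$ as well.

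Once $i(a,b) = 1$ and $|n| = 1$ are established (so $a_i$ is genuinely of Type $3$), the final assertion — that $b = b_i$ is disjoint from every curve of $A \setminus \{a_i\}$ — would follow by a symmetric application of the classification to $\tau_B$ and $\tau_A$: the curve $b_i$ is a curve of $B$, and by the analogue of Proposition \ref{propClasificacion} with the roles of $A$ and $B$ swapped, $b_i$ has a type; but $b_i$ meets $a_i$ with $i(b_i, a_i) = 1$ while $a_i$ meets no curve of $B$ other than $b_i$ — I would use this to pin down that $b_i$ is of Type $3$ as well (its "partner" in $A$ under $\tau_B\tau_A$ must be $a_i$ itself, using that $i(a_i, b_i) = 1$ and $|n_i| = 1$ makes the pairing symmetric), and Type $3$ forbids $b_i$ from meeting two distinct curves with the constraints that would be forced if $b_i$ met some $a_j$, $j \ne i$. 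Alternatively, and perhaps more transparently, I would use the change-of-coordinates / braid-relation identity $(\tau_A\tau_B)\tau_A(\tau_A\tau_B)^{-1} = \tau_B$ to transport the statement "$a_i$ meets only $b_i$" to "$b_i$ meets only $\tau_A\tau_B \cdot b_i$", and check the latter curve is $a_i$.

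\textbf{Main obstacle.} The delicate point is ruling out Type $2$ (and to a lesser extent Type $5$) for a curve meeting a single curve of $B$: Ivanov's hidden formula alone is consistent with Type $2$, so the exclusion must come from playing the braid relation off against the classification applied to \emph{both} $\tau_A$ and $\tau_B$ — essentially from the fact that "intersecting a single curve" is too rigid to be compatible with the $|n_i| = 2$, $i(a_i,b_i)=1$ configuration once one also tracks what happens on the $B$-side. Getting the bookkeeping of powers and signs right in that comparison is where the real work lies.
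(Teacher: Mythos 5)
Your overall strategy (Ivanov's hidden formula plus the classification of Table \ref{table}) is the right one, but you miss the single observation that makes the lemma immediate, and as a result the Type 2 case --- which you yourself flag as the main obstacle --- is left genuinely unresolved. The observation is exactly the one you already use to kill Type 5: if $a_i$ meets only the curve $b$, then \emph{every} subarc of $a_i$ cut out by the intersection points has both endpoints on $b$, so every $x_i=1$ and $X(a_i,\tau_B)=i(a_i,b)>0$. Since every row of Table \ref{table} except Type 3 has $X(a_i,\tau_B)=0$, this forces Type 3 in one stroke, giving $i(a_i,b_i)=1$, $|n_i|=1$, $X(a_i,\tau_B)=i(a_i,b)=1$, and hence $b=b_i$. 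In particular, for Type 2 you derive $X=0$ from the hidden formula and call it ``consistent'', but the geometric value of $X$ for a curve meeting a single curve once is $1$, not $0$ --- the contradiction is identical to the one you found for Type 5. Your proposed substitute (comparing $i(b_i,\tau_A\cdot b_i)$ with $i(b_i,\tau_B\tau_A\cdot b_i)$ and ``tracking signs'' through an unspecified case analysis) is never carried out, and is not needed.

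A smaller issue: your opening claim that $b=b_i$ ``follows from Lemma \ref{lem:IntersectAtLeastOne}'' is a non sequitur --- that lemma only says $a_i$ meets \emph{some} curve of $B$, which is already your hypothesis, and does not identify that curve as $b_i$. The identification should come \emph{after} Type 3 is established (so that $i(a_i,b_i)=1\neq 0$), or directly from Proposition \ref{prop:formulaEscondidaIvanov}, which with a single intersecting curve $b$ of power $m$ gives $i(a_i,b_i)=i(a_i,\tau_B\cdot a_i)=|m|\,i(a_i,b)^2>0$. Your treatment of the final assertion is fine: the second alternative you sketch --- transporting ``$a_i$ meets only $b_i$'' through $f=\tau_A\tau_B\tau_A$, which swaps $A$ and $B$ and preserves intersection numbers, so that the unique curve of $A$ met by $b_i=f\cdot a_i$ must be $a_i$ itself --- is exactly the intended argument.
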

\begin{proof}
	We may assume $\tau_A = \delta_{a_1}^{n_1} \dots \delta_{a_k}^{n_k}$ and $\tau_B=\delta_{b_1}^{n_1}\dots\delta_{b_k}^{n_k}$, so that $\tau_A\tau_B\cdot a_i  = b_i$. Further assume $a=a_1$ and $n=n_1$.

    If $a_1$ intersects a single curve, then $X(a_1,\tau_B)>0$. By Proposition \ref{propClasificacion}  the curve $a_1$ is of type 3. It follows that $X(a_1, \tau_B)=1$, $|n_1|=1$ and $i(a_1,b_1)=1$. Since $a_1$ intersects a single curve $b\in B$, then $b=b_1$. This establishes the first part of the lemma.

    Recall that $\tau_A\tau_B\tau_A\cdot a_i = b_i$ and $\tau_A\tau_B\tau_A\cdot B = A$. Therefore, if $a_1$ intersects a single curve in $B$, then $b_1$ intersects a single curve in $A$.
\end{proof}

If the curve $a_i$ is under the conditions of the previous lemma, the pair $a_i,b_i$ can be `deleted' from the multitwists $\tau_A,\tau_B$. Indeed, consider 
\begin{align*}
	&\tau_{A'} = \delta_{a_1}^{n_1} \dots \delta_{a_{i-1}}^{n_{i-1}} \cdot \delta_{a_{i+1}}^{n_{i+1}}\dots\delta_{a_k}^{n_k},\\ 
	&\tau_{B'}=\delta_{b_1}^{n_1}\dots \delta_{b_{i-1}}^{n_{i-1}} \cdot \delta_{b_{i+1}}^{n_{i+1}} \dots\delta_{b_k}^{n_k}.
\end{align*}
Then, it is easy to verify that $\tau_{A'}\tau_{B'}\tau_{A'} = \tau_{B'}\tau_{A'}\tau_{B'}$.

The previous process simplifies the braided multitwists $\tau_A,\tau_B$ by eliminating the components that are clearly braided and do not interact with any other curves in $\tau_A,\tau_B$. By simplifying until there are no such pairs, we obtain \emph{reduced} multitwists:

\begin{definition}[Reduced multitwists]\label{def:sencillisimo}
    Let $\tau_A,\tau_B\in \mcg(S)$ be two braided multitwists. We will say $\tau_A,\tau_B$ are \emph{reduced} if every curve  $a\in A$ intersects at least two distinct curves in $B$. 
\end{definition}
\begin{remark}
	If $\tau_A,\tau_B$ are reduced multitwists, then $A, B$ have no common curves. 
\end{remark}

A key ingredient in the proof of Theorem \ref{thm1} is that reduced multitwists are trivial. We record this fact in the next proposition, and dedicate most of the next sections to proving it. 

\begin{proposition}\label{propSencillisimo}
	Let $S$ be an orientable surface. If $\tau_A,\tau_B$ are reduced multitwists, then $\tau_A=\tau_B=1\in\mcg(S)$.  
\end{proposition}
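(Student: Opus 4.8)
The plan is to show that if $\tau_A,\tau_B$ are reduced and nontrivial, then $A$ (hence $B$) must be empty, contradicting reducedness. Throughout I write $\tau_A=\delta_{a_1}^{n_1}\cdots\delta_{a_k}^{n_k}$, $\tau_B=\delta_{b_1}^{n_1}\cdots\delta_{b_k}^{n_k}$ with $\tau_A\tau_B\cdot a_i=b_i$, and I use Proposition~\ref{propClasificacion}: every $a_i\in A$ is of one of the five types in Table~\ref{table}. By reducedness, each $a_i$ intersects at least two distinct curves of $B$; combined with Lemma~\ref{lemaDisjuntodetodoSalvoUna} (which rules out Type~3, as a Type~3 curve can only intersect one curve of $B$) and with the entry $X(a_i,\tau_B)=0$, I already know every $a_i$ is of Type~1, 2, 4, or 5. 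The strategy is to extract a numerical invariant from the reduced configuration, play it against both $\tau_A$ and $\tau_B$ (the situation is symmetric in $A$ and $B$ via the relation $\tau_A\tau_B\tau_A\cdot B=A$ noted in the proof of Lemma~\ref{lemaDisjuntodetodoSalvoUna}), and derive a contradiction — presumably by counting intersection points two ways, or via the homological formula of Lemma~\ref{lemaInterseccionAlg}.

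Concretely, first I would build the bipartite ``intersection graph'' $G$ with vertex set $A\sqcup B$ and an edge $a_i\!-\!b_j$ whenever $i(a_i,b_j)\neq 0$. Reducedness says every $A$-vertex has degree $\geq 2$; applying the same argument to the pair $(\tau_B,\tau_A)$ — which is also reduced, since $\tau_A\tau_B\tau_A$ carries $A$ to $B$ and conjugation preserves the braid relation — every $B$-vertex also has degree $\geq 2$. Next I would exploit Table~\ref{table} to control edge labels: off the ``diagonal'' (the pairs $a_i,b_i$), every intersection is exactly $1$ and forces the partner's power to have $|n_j|=1$; a power with $|n_j|=2$ can only occur on a diagonal pair (and only in Types~2 and~4, where it appears as $i(a_i,b_i)=1$ with $|n_i|=2$, or in Type~4 where $a_i$ meets some $b_l$, $l\neq i$, with $|n_l|=2$ — wait, that $b_l$-power-2 must then itself be diagonal, i.e. $l$ is such that $b_l$'s partner, not $a_l$ in general; I would need to chase this carefully). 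The upshot I want is that the subgraph of ``heavy'' edges (those incident to a power-$2$ curve) is extremely constrained, forcing most of the configuration to consist of Type~1 and Type~5 curves with all powers $\pm1$.

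With powers essentially $\pm1$, the clean tool is Ivanov's hidden formula, Proposition~\ref{prop:formulaEscondidaIvanov}: for a curve $a_i$ with all relevant powers of absolute value $1$, $i(a_i,\tau_B\cdot a_i)=\sum_j(i(a_i,b_j)-1)i(a_i,b_j)+X(a_i,\tau_B)$, and since each $i(a_i,b_j)\in\{0,1\}$ there the sum vanishes, giving $i(a_i,\tau_B\cdot a_i)=X(a_i,\tau_B)$; but we also need $i(a_i,b_i)=i(a_i,\tau_A\tau_B\cdot a_i)$. Combining this with Equation~(\ref{ecIvanov}) and the type constraints, together with the symmetric statements for the $b_j$'s and the fact that $\tau_A\tau_B\tau_A$ realizes the graph automorphism $a_i\mapsto b_i$, I expect to force a contradiction with ``$G$ has all degrees $\geq 2$'' — for instance by showing a sum of nonnegative quantities over $A$ equals a strictly smaller sum, or by producing a curve $a_i$ of Type~1 (so $i(a_i,b_i)=0$) that is nonetheless forced to intersect $b_i$. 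A cleaner alternative worth trying: feed the configuration into the homological Lemma~\ref{lemaInterseccionAlg} applied to $\tau_A\tau_B\tau_A=\tau_B\tau_A\tau_B$ acting on $[a_i]$, extracting linear relations among the classes $[a_i],[b_j]$ that are incompatible with the geometric picture once degrees are $\geq 2$.

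The main obstacle I anticipate is the bookkeeping around the power-$2$ curves (Types~2 and~4): these break the ``all powers $\pm1$'' simplification that makes Ivanov's formula collapse, and Type~4 in particular involves an asymmetry (a curve $a_i$ of power $1$ forced to meet a power-$2$ curve $b_l$), so I will need a separate sub-argument showing such configurations cannot close up into a reduced pair — likely by tracking where that power-$2$ curve $b_l$ goes under $\tau_A\tau_B\tau_A$ and noting its image in $A$ must also have power $2$, which back-propagates constraints until the local picture is forced to be exactly the ``$i(a,b)=1$, single intersection'' building block of Lemma~\ref{lemaDisjuntodetodoSalvoUna}, contradicting reducedness. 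If that direct route is too delicate, I would fall back on first proving the proposition under the extra hypothesis that all $|n_i|=1$ (where the collapse of Ivanov's formula makes everything transparent) and then showing separately that a reduced pair cannot contain any power-$2$ curve at all.
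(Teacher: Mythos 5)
Your proposal contains a concrete error at the outset: you claim that Lemma \ref{lemaDisjuntodetodoSalvoUna} together with reducedness rules out Type~3 curves, ``as a Type~3 curve can only intersect one curve of $B$.'' That reverses the implication. The lemma says that a curve of $A$ meeting only \emph{one} curve of $B$ must be of Type~3; a Type~3 curve can meet any odd number of curves of $B$, and reducedness merely forces that number to be $\geq 3$. Type~3 is not eliminated — it is in fact the most delicate case, and no subsequent part of your sketch accounts for it.

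More fundamentally, the proposal never actually produces a contradiction. The tools you list — degree bounds in the bipartite intersection graph, the collapse of Ivanov's hidden formula when all powers are $\pm 1$, and Lemma \ref{lemaInterseccionAlg} — each only reproduce the constraints already encoded in Table \ref{table} (your computation $i(a_i,\tau_B\cdot a_i)=X(a_i,\tau_B)$ is exactly Equation (\ref{ecIvanov}) again), and the phrases ``I expect to force a contradiction'' and ``a cleaner alternative worth trying'' stand in for the argument itself. The missing idea is dynamical: the braid relation makes $f=\tau_A\tau_B\tau_A$ conjugate $\tau_A$ to $\tau_B$ and back, so $f$ permutes the curves $A\cup B$ preserving powers; combining the braid relation with Lemma \ref{lemaInterseccionAlg} yields the sign-reversal $\ihat([a],[b])=-\ihat([a],f^2\cdot[b])$, while a separate, type-by-type topological analysis of regular neighbourhoods (planar tori) shows that every $f$-orbit has size two. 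Once $f^2$ fixes each curve and — because each $a_i$ meets at least three curves of $B$ (at least four in the Type~5 case), which is what the auxiliary lemmas must establish — also fixes the orientation class $[a_i]$, one gets $\ihat([a],[b])=+\ihat([a],f^2\cdot[b])\neq 0$, the desired contradiction. Without the orbit analysis, or some substitute for it, the counting and homological identities you propose are not strong enough to conclude, so the proof as sketched does not go through.
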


To prove Proposition \ref{propSencillisimo} we will need to understand the action of $\tau_A,\tau_B$  on the curves $A\cup B$.

\subsection{Action on curves and their homology classes}
Let $\tau_A = \delta_{a_1}^{n_1} \dots \delta_{a_k}^{n_k}$ and $\tau_B=\delta_{b_1}^{n_1}\dots\delta_{b_k}^{n_k}$ be two braided multitwists. We will consider the action of $f=\tau_B \tau_A \tau_B $ on the curves $A\cup B$ and the action on their homology classes. 

Recall that up to re-indexing $B$, we may assume $f\cdot a_i =  b_i$.  We will write $a_{i_1}$ for the image  $f\cdot b_i = a_{i_1}$ and, in general, we write $a_{i_k}$ for the image $f \cdot b_{i_{k-1}} = a_{i_k}$. The orbits of curves will be denoted by  \[\orb_f(a_i) = \{f^k\cdot a_i| \;k\in \integers\}.\]
 \highlight{
 The action of $f$ by conjugation on $\tau_A$ and $\tau_B$ permutes the curves $A\cup B$ and preserves the powers of the associated Dehn twists. In other words, any two curves $a,b\in A\cup B$ in the same $f$-orbit have associated Dehn twists with the same powers in $\tau_A, \tau_B$. As a consequence, two curves $a, a' \in A$ in the same $f$-orbit have the same type in Table \ref{table}.}

We will also consider the action of $f$ on the homology  classes of curves in   $A\cup B$. To achieve this, we need to assign orientations to the curves in $A\cup B$:  for each orbit $\orb_f(-)$ take a representative of the orbit $a\in \orb_f(-)$  and fix an arbitrary orientation on $a$. Then, any other curve $c\in \orb_f(a)$  in the orbit inherits an induced orientation. This orientation comes from considering the smallest $k\in \mathbb{N}$ such that $f^k\cdot a = c$. Since $a$ is oriented it induces an orientation on $f^k\cdot a$, which we consider as the orientation of $c$. 

The selected orientations allows us to talk about homology classes $[c]$ for curves  $c\in A\cup B$. The following diagram schematizes our choice of orientations:
\[
[a_i] \xrightarrow{f} [b_i]  \xrightarrow{f} [a_{i_1}] \xrightarrow{f} [b_{i_1}]  \xrightarrow{f}[a_{i_2}]  \xrightarrow{f}\dots \xrightarrow{f}[b_{i_m}],
\] where the orientation $[a_i]$ is arbitrary.
 
 \begin{remark}
 Notice there could be an $m\in \mathbb{N}$ such that  $f^m\cdot [a_i]=-[a_i].$
 \end{remark}

The next step is to show that the orbit of every curve $c\in A\cup B$ has  size two, i.e, $|\orb_f(c)|=2$.

\subsection{Orbits have size two}\label{sec:OrbitaTam2}

\begin{proposition}\label{proposicionOrbitasDos}
    Consider two reduced multitwists $\tau_A, \tau_B$ and let $f=\tau_A\tau_B\tau_A$. Then, $|\orb_f(c)|=2$ for every curve  $c\in A\cup B$.
\end{proposition}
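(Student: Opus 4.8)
The plan is to show that $f = \tau_A\tau_B\tau_A$ fixes every curve of $A\cup B$ \emph{as a set}, by exploiting the braid relation together with the classification in Table~\ref{table}. Recall that the braid relation gives $f\cdot a_i = b_i$ and $f\cdot B = A$, so the $f$-orbit of each curve alternates between $A$ and $B$; hence $|\orb_f(c)|=2$ for $c=a_i$ is equivalent to $f\cdot b_i = a_i$ (cf. Remark~\ref{rmk:accionenai}, where this was flagged as not automatic). So the whole proposition reduces to proving $f\cdot b_i = a_i$ for all $i$, or equivalently $\tau_B\tau_A\cdot b_i = a_i$.

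First I would set up the combinatorics of the orbit. Fix a curve $a_i\in A$ and follow its orbit $a_i \xrightarrow{f} b_i \xrightarrow{f} a_{i_1} \xrightarrow{f} b_{i_1}\xrightarrow{f}\cdots$ until it closes up; since $f$ permutes the finite set $A\cup B$ and conjugation by $f$ preserves the powers of the Dehn twists, all the $a$'s in this orbit share the same power $n$ (up to sign $|n|$) and the same Type, and likewise all the $b$'s. The key geometric input is that $i(-,f\cdot -) = i(-, \tau_B\tau_A\tau_B\cdot -)$ must be computed in two ways: using that reduced multitwists require each $a\in A$ to meet at least two curves of $B$, Table~\ref{table} forbids Types 3 and~4 (those force $a_i$ to meet a curve of $B$ with $i(a_i,b_i)$ contributing, but more sharply Type~3 requires $a_i$ to meet a \emph{single} curve and Type~4 requires a curve $b_l$ with $|n_l|=2$ which, by the symmetry $n_i=m_i$ and the type-constancy along orbits, cannot be reconciled). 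So in a reduced pair every $a_i\in A$ is of Type~1, 2, or~5, meaning $i(a_i,b_i)\in\{0,1,2\}$ with $X(a_i,\tau_B)=0$, and every off-diagonal intersection $i(a_i,b_j)$ ($i\ne j$) is either $0$ or $1$ with $|n_j|=1$.

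Next, the crucial estimate: suppose for contradiction that $|\orb_f(a_i)|>2$, so $a_{i_1}\ne a_i$ but $a_{i_1}\in A$. I would compare $i(a_i, f\cdot a_i) = i(a_i,b_i)$ with $i(b_i, f\cdot b_i) = i(b_i, a_{i_1})$; since $f$ is a homeomorphism these intersection numbers are related through the orbit. The strategy is to apply Proposition~\ref{prop:formulaEscondidaIvanov} (Ivanov's hidden formula) not to $a_i$ and $\tau_B$ alone but to the long word $f = \tau_A\tau_B\tau_A$ acting on $a_i$, and separately to $f$ acting on $b_i$, and derive that an orbit of size $>2$ produces strictly positive "extra" intersection ($X$-type) terms that contradict $X(a_i,\tau_B)=0$ forced by the Type classification; intuitively, a genuinely longer orbit forces the arcs of $a_i$ cut out by $B$ to wind around curves of matching sign, bumping $X$ above zero. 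Combined with Lemma~\ref{lemaInterseccionAlg} on the homological side — tracking $\ihat(f^k\cdot[a_i],[b_j])$ around the orbit and using that the algebraic intersection numbers are constrained to $\{-1,0,1\}$ by the Type table — one pins down that the orbit must close after exactly two steps.

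The main obstacle I expect is the bookkeeping in that last step: controlling $X(a_i,\tau_B)$ and the iterated intersection numbers along an orbit of \emph{a priori} unbounded length, and in particular ruling out configurations where cancellations conspire to keep $X=0$ despite a long orbit. I anticipate handling this by an inductive or "first return" argument — look at the first index where the orbit would have to revisit $A\cup B$, apply the inequality~(\ref{formulaPositiveTwists}) or Proposition~\ref{prop:formulaIvanov} to a subword, and show the only consistent possibility with the reducedness hypothesis and the Type~1/2/5 constraints is $f^2\cdot a_i = a_i$. The homological formula (Lemma~\ref{lemaInterseccionAlg}) should serve as the rigidifying constraint that prevents the geometric intersection numbers from oscillating freely.
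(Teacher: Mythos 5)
Your reduction of the proposition to showing $f\cdot b_i = a_i$ is correct, and you rightly point to the homological machinery (Lemma \ref{lemaInterseccionAlg}) as the rigidifying constraint. However, there is a genuine error at the heart of the plan: you claim that in a reduced pair Table \ref{table} forbids Types 3 and 4. This is false. Type 3 does not require $a_i$ to meet a single curve of $B$; it only requires $i(a_i,b_i)=1$, $|n_i|=1$ and $X(a_i,\tau_B)=1$, which forces $a_i$ to meet an \emph{odd} number of curves of $B$ --- possibly three or more, perfectly compatible with reducedness. (The implication you have in mind goes the other way: meeting a single curve forces Type 3, as in Lemma \ref{lemaDisjuntodetodoSalvoUna}.) Likewise Type 4 is not ruled out by power-symmetry along orbits: the curve $b_l$ with $|n_l|=2$ simply corresponds to a Type 2 curve $a_l\in A$, which is entirely consistent. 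In the paper's proof the Type 3 case is precisely the hard part and occupies three separate lemmas (Lemmas \ref{lemaCansao}, \ref{lemaCansao2} and \ref{lemaOrbitasInt1X1}), culminating in a bigon-criterion contradiction inside a carefully built regular neighbourhood; Type 4 gets its own lemma as well. By discarding these types you skip most of the content of the proposition.

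Beyond that, the mechanism you propose for the surviving types --- that an orbit of size greater than two ``bumps $X(a_i,\tau_B)$ above zero'' via Ivanov's formula applied to the word $f$ --- is asserted rather than proved, and it is not how the constraint actually operates. The paper's argument is: (i) establish $\ihat([a_i],[b]) = -\ihat([a_i],f^2\cdot[b])$ (Lemma \ref{lemaInterseccionOrbitas}), which in the relevant cases forces $i(f^2\cdot a_i,b)=i(a_i,b)$ for all $b\in B$; (ii) in the harder cases, take a regular neighbourhood torus $T_i$ of $a_i\cup\bigcup_{b\in B_i}b$ and show the intersection data pins $f^2\cdot a_i$ down to lie in $T_i$ and coincide with $a_i$ (or, for Type 3, forces orbits of size 4 and then a contradiction). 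The function $X$ enters only through the sign pattern of the punctures of $T_i$, not through an iterated application of Proposition \ref{prop:formulaEscondidaIvanov} to the word $f$. As written, your ``first return'' step restates the goal rather than proving it, so the argument is incomplete even for Types 1, 2 and 5.
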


To \highlight{prove } Proposition \ref{proposicionOrbitasDos} we treat each type of curve $a_i\in A$ separately (see Table \ref{table}). Thus, this proof is split among various lemmas and propositions below. 

\begin{lemma}\label{lemaOrbitasInt2}
    Consider two reduced multitwists $\tau_A, \tau_B$ and let $f=\tau_A\tau_B\tau_A$. If $a_i\in A$ is a curve of type 5, then  $|\orb_f(a_i)|=2$.
\end{lemma}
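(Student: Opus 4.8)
The plan is to show directly that $f^2\cdot a_i = a_i$, which (since $\tau_A,\tau_B$ are reduced) will force $\orb_f(a_i)=\{a_i,b_i\}$.

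First I would record the action of $f$ on the pair $a_i,b_i$. As $a_i\in A$ and $\tau_A$ is a product of Dehn twists along curves in $A$, each of which is disjoint from $a_i$ or equal to it, we have $\tau_A\cdot a_i = a_i$; hence $f\cdot a_i = \tau_A\tau_B\cdot a_i = b_i$. Using the braid relation in the form $f=\tau_B\tau_A\tau_B$ together with $\tau_B\cdot b_i = b_i$, we get $f\cdot b_i = \tau_B\tau_A\cdot b_i$, which lies in $A$ by Remark~\ref{rmk:accionenai}. Write $a' := f\cdot b_i\in A$, so that $f^2\cdot a_i = a'$ and $a'$ lies in the same $f$-orbit as $a_i$.

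Next I would bound an intersection number. Since $f$ preserves the geometric intersection number and $a' = f\cdot b_i$, $b_i = f\cdot a_i$, we obtain
\[
i(a', b_i) = i(f\cdot b_i,\, f\cdot a_i) = i(b_i, a_i) = 2,
\]
the last equality because $a_i$ is of type $5$. Now $a'\in A$ lies in the same $f$-orbit as $a_i$, so by the orbit–type invariance noted just before Proposition~\ref{proposicionOrbitasDos}, the curve $a'$ is also of type $5$. Suppose, for contradiction, that $a'\neq a_i$; then $a'$ carries an index $j\neq i$ in our labelling of $A$, and since $i(a', b_i) = 2\neq 0$ with $j\neq i$, the type $5$ row of Table~\ref{table} forces $i(a', b_i) = 1$, a contradiction. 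Hence $a' = a_i$, i.e. $f^2\cdot a_i = a_i$.

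Finally, since $\tau_A,\tau_B$ are reduced we have $A\cap B = \emptyset$, so $b_i = f\cdot a_i \neq a_i$ and thus $|\orb_f(a_i)|\geq 2$; together with $f^2\cdot a_i = a_i$ this gives $|\orb_f(a_i)| = 2$. The only point requiring a little care is the step that $a'$ inherits type $5$ from $a_i$; this rests on the orbit–type invariance already established, and I do not expect a genuine obstacle here — the lemma is really a short bookkeeping argument built on Table~\ref{table} and the fact that $f$ preserves intersection numbers.
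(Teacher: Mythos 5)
Your proof is correct and is essentially the paper's argument run in the opposite direction: the paper applies $i(f\cdot x, f\cdot y)=i(x,y)$ to the $f$-preimage of $a_i$ and reads the contradiction off the type 5 row of Table~\ref{table} applied to $a_i$ itself, whereas you apply it to $a'=f^2\cdot a_i$ and invoke the same row for $a'$ (via the orbit--type invariance). Both hinge on exactly the same two facts, so there is nothing further to add.
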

\begin{proof}
    Looking for a contradiction, assume $|\orb_f(a_i)|>2$. In that case, there exists $b\in \orb_f(a_i)$ such that $f\cdot b=a_i$ and $b\neq b_i$. Notice  \[i(b,\, a_i) = i(f\cdot b, \, f\cdot a_i)= i(a_i,\,b_i)=2.\]

    Thus, $a_i$ has intersection two with two distinct curves  $b, b_i\in B$. But this is not possible (see Table \ref{table} for type 5 curves). 
\end{proof}

Before continuing with other cases, we require the following lemma.

\begin{lemma}\label{lemaInterseccionOrbitas}
    Consider two reduced multitwists $\tau_A, \tau_B$ and let $f=\tau_A\tau_B\tau_A$. For any curves $a_i\in A$ and  $b\in B$, we have that \[\ihat([a_i],[b]) = - \ihat([a_i], \, f^2\cdot[b])\]. 

    In particular, if $\ihat([a_i],[b])=\pm1$ then $i(a_i, \,f^{2k}\cdot b)=1$ for every $k\in \mathbb{Z}$. 
\end{lemma}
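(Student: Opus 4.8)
The identity $\hat\iota([a_i],[b]) = -\hat\iota([a_i], f^2\cdot[b])$ is equivalent to showing $\hat\iota([a_i],[b]) + \hat\iota([a_i], f^2\cdot[b]) = 0$, i.e.\ that $[b] + f^2\cdot[b]$ pairs trivially with $[a_i]$. Since $f = \tau_A\tau_B\tau_A$ is a product of (powers of) Dehn twists, I would compute $f^2\cdot[b]$ explicitly using the homological action formula \eqref{homologicalActionMultitwist} and Lemma \ref{lemaInterseccionAlg}, and then pair with $[a_i]$. The key structural input is that, by the classification in Table \ref{table} (Proposition \ref{propClasificacion}), every curve of $A$ has geometric intersection $0$, $1$, or $2$ with every curve of $B$, and — crucially — the only way a curve $a_j\in A$ meets $b_j$ (its own image) with multiplicity $2$ is when $a_j$ has type $5$ and $|n_j|=1$; in all other cases $i(a_j,b_j)\le 1$. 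On homology, $\hat\iota([a_j],[b_j]) \in \{-1,0,1\}$ in every case except possibly the type-2 case where $i(a_j,b_j)=1$ but $|n_j|=2$, so $\hat\iota([a_j],[b_j])=\pm1$ there too. So the algebraic self-pairings behave uniformly.

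The computation itself should proceed as follows. Since the braid relation gives $f\cdot a_i = b_i$ and $f\cdot b_i = a_{i_1}$, and since $f$ acts on homology, the vector $f^2\cdot[b]$ is obtained by applying $f$ twice via \eqref{homologicalActionMultitwist}. I would first observe that it suffices to prove the statement for $b = b_i$ being the $f$-image of $a_i$ (the general $b\in B$ in the orbit is handled by applying $f^{-2}$ to both sides, using that $f$ preserves $\hat\iota$ up to sign consistently with the chosen orientations; a $b\in B$ not in $\orb_f(a_i)$ lies in some other orbit and the same argument applies after reindexing, or one shows $\hat\iota([a_i],[b])=0$ forces both sides to vanish). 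Then, writing $f = \tau_A\tau_B\tau_A$, I expand $f\cdot[b_i]$, $f^2\cdot[b_i]$ as sums of the form $[b_i] + \sum (\text{integers})\cdot[c]$ over $c\in A\cup B$, pair with $[a_i]$, and use the constraint that $a_i$ meets at most two curves of $B$ (and those with controlled powers). The algebra reduces to a finite case check over the five types in Table \ref{table}; in each case the cross-terms cancel in pairs because of the sign pattern built into the twist formula $\hat\iota([a],[b]) + n\,\hat\iota([a],[c])\hat\iota([c],[b])$ and the fact that, along the orbit, consecutive curves intersect exactly once with a consistent algebraic sign.

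For the ``in particular'' clause: if $\hat\iota([a_i],[b])=\pm1$, then $\hat\iota([a_i], f^2\cdot[b]) = \mp1 \ne 0$, and since $|\hat\iota([a_i], f^{2k}\cdot[b])| \le i(a_i, f^{2k}\cdot b)$ always, we get $i(a_i, f^{2k}\cdot b)\ge 1$; the upper bound $i(a_i, f^{2k}\cdot b)\le 1$ comes from Table \ref{table} — $f^{2k}\cdot b \in B$ (as $f^2$ preserves $B$), and any curve of $B$ meeting $a_i$ nontrivially meets it exactly once unless $a_i$ has type $4$ or $5$; one rules those out using that $\hat\iota([a_i],[b])=\pm1$ forces $i(a_i,b)=1$ at the base point and the orbit-constancy of type, or by a direct contradiction with the type constraints. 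Iterating $\hat\iota([a_i], f^2\cdot[b]) = -\hat\iota([a_i],[b])$ gives $\hat\iota([a_i], f^{2k}\cdot[b]) = (-1)^k\hat\iota([a_i],[b]) = \pm1$ for all $k$, hence $i(a_i, f^{2k}\cdot b)=1$.

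\textbf{Main obstacle.} The delicate point is the cancellation of cross-terms in the expansion of $\hat\iota([a_i], f^2\cdot[b])$: a priori there are many curves $c\in A\cup B$ contributing, and one must argue that $a_i$ only interacts with a tightly controlled subset (essentially $b_i$, $a_i$ itself, and at most one more curve from each of $A$, $B$), and that the algebraic intersection signs along the $f$-orbit are coherent enough for the terms to telescope to $-\hat\iota([a_i],[b])$ rather than some other multiple. Getting the orientation bookkeeping right — so that the sign in $\hat\iota([a_i],[b]) = -\hat\iota([a_i],f^2\cdot[b])$ is genuinely $-1$ and not $+1$ — is where the care is needed, and I expect this is exactly why the chosen orientations on orbit representatives were set up so explicitly in the preceding subsection.
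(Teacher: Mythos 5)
There is a genuine gap: your plan correctly identifies the tools (the homological action formula and bilinearity of $\ihat$), but it leaves the decisive step --- why the pairing comes out to exactly $-\ihat([a_i],[b])$ --- as an unexecuted ``finite case check over the five types'' resting on an asserted pairwise cancellation of cross-terms, and you yourself flag this cancellation and the sign as the unresolved obstacle. That is precisely the content of the lemma, so as written the proposal does not constitute a proof. Moreover, the case-by-type route is not how the identity actually closes: no appeal to Table \ref{table} is needed for the main equality, and the $-1$ does not arise from terms cancelling in pairs.

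The two ideas you are missing are the following. First, since $\tau_B\tau_A\cdot b=f\cdot b$ is a curve of $A$ and curves of $A$ are pairwise disjoint, you get $\ihat([a_i],\tau_B\tau_A\cdot[b])=0$; expanding $\tau_B\tau_A\cdot[b]$ with Equation \ref{homologicalActionMultitwist} and using $\ihat([a_i],\tau_A\cdot[b])=\ihat(\tau_A^{-1}\cdot[a_i],[b])=\ihat([a_i],[b])$ turns this vanishing into the closed identity $\ihat([a_i],[b])=-\sum_k n_k\,\ihat(\tau_A\cdot[b],[b_k])\,\ihat([a_i],[b_k])$. Second, the braid relation together with $\tau_A\cdot[a_i]=[a_i]$ and $\tau_B\cdot[b]=[b]$ collapses the pairing with $f^2\cdot[b]$: $\ihat([a_i],f^2\cdot[b])=\ihat([a_i],\tau_B^2\tau_A\cdot[b])=\ihat([a_i],[b])+2\sum_k n_k\,\ihat(\tau_A\cdot[b],[b_k])\,\ihat([a_i],[b_k])$. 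Substituting the first identity gives $\ihat([a_i],[b])-2\,\ihat([a_i],[b])=-\ihat([a_i],[b])$ with no orientation bookkeeping beyond $\tau_B\tau_A\cdot[b]=\pm[a]$ and no case analysis; your proposed reduction to $b=b_i$ is likewise unnecessary, since this argument is uniform in $b\in B$. Your treatment of the ``in particular'' clause is essentially fine, though it is cleaner to note that Proposition \ref{propClasificacion} gives $i(a_i,c)\in\{0,1,2\}$ for $c\in A\cup B$, so $\ihat=\pm1$ forces $i=1$ directly for every $f^{2k}\cdot b$.
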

In words, a curve $a_i\in A$ has the same algebraic intersection number (up to sign) with every curve in $\orb_f(b)\cap B$. 
\begin{proof}
	Let $a\in A$ be the curve $a=\tau_B\tau_A\cdot b$. Note  that  $\tau_B\tau_A \cdot[b] = \pm[a]$ and so it follows
    \[
        0  = \ihat([a_i], [a])  = \ihat([a_i], \pm \tau_B\tau_A \cdot [b])  = \ihat([a_i],  \tau_B\tau_A \cdot [b]).
    \]
    Now, by Equation \ref{homologicalActionMultitwist} we have  
    \begin{align*}
        0 & = \ihat([a_i],  \tau_B\tau_A \cdot [b]) \\ 
        &= \ihat\left([a_i], \tau_A \cdot [b]  + \sum_{b_k\in B} n_k \, \ihat(\tau_A\cdot [b], \,[b_k])\,[b_k] \right). \\ 
    \end{align*}
    From here, linearity yields
    \begin{equation}\label{ec1}
        \ihat([a_i], [b]) = -\sum_{b_k\in B} n_k \, \ihat(\tau_A\cdot [b], \,[b_k])\, \ihat([a_i], \, [b_k]).
    \end{equation}

    On the other hand, using the braid relation  
    \begin{align*}
        \ihat([a_i],\, f^2\cdot [b] ) & = \ihat([a_i], \, (\tau_A\tau_B\tau_A) (\tau_B\tau_A\tau_B)\cdot [b] ) \\ 
        &= \ihat ([a_i], \, \tau_B\tau_A\tau_B\tau_A \cdot [b]) \\ 
        &= \ihat([a_i], \, \tau_B^2 \tau_A\cdot [b]) \\ 
        &= \ihat\left([a_i],\, \tau_A\cdot[b] + 2\sum_{b_k\in B}n_k\,\ihat(\tau_A\cdot [b],[b_k])\,[b_k]\right) \\ 
        &= \ihat([a_i], \, [b]) + 2\sum_{b_k\in B} n_k \,\ihat(\tau_A\cdot [b],[b_k]) \,\ihat([a_i],[b_k]).
    \end{align*}

    Substituting Equation \ref{ec1} in the previous equality, we obtain 
    \[
        \ihat([a_i],\, f^2\cdot [b] ) = - \ihat([a_i], [b]).
    \]

    For the second part of the lemma, it is enough to observe that Proposition \ref{propClasificacion} implies $i(a, b)\in \{0,1,2\}$ for any two curves $a,b\in A \cup B$. Thus,  $\ihat(a_i,b)=\pm 1$ implies $i(a_i, b)=1$ and, since the algebraic intersection number only changes sign, we conclude $i(a_i, \,f^{2k}\cdot b)=1$.
\end{proof}

The next lemma proves Proposition \ref{proposicionOrbitasDos} for type 2 curves $a_i\in A$. 

\begin{lemma}\label{lemaOrbitasNi2}
    Consider two reduced multitwists $\tau_A, \tau_B$ and let $f=\tau_A\tau_B\tau_A$. If $a_i\in A$ is a type 2 curve, then  $|\orb_f(a_i)|=2$.
\end{lemma}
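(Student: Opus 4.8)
The plan is to argue by contradiction, mimicking the structure of Lemma~\ref{lemaOrbitasInt2} but using the finer information available for type~2 curves together with Lemma~\ref{lemaInterseccionOrbitas}. Suppose $a_i\in A$ is of type~2 and $|\orb_f(a_i)|>2$. Then there is a curve $b\in B\cap\orb_f(a_i)$ with $f\cdot b=a_i$ and $b\neq b_i$. As in the type~5 case, $i(b,a_i)=i(f\cdot b,f\cdot a_i)=i(a_i,b_i)=1$. So now $a_i$ intersects (at least) the two distinct curves $b$ and $b_i$ in $B$, each with intersection number~$1$; from Table~\ref{table}, a type~2 curve is allowed to do this, so I cannot get an immediate contradiction from intersection numbers alone. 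The extra leverage I expect to need is the power data: for a type~2 curve we have $|n_i|=2$, and since $b=f^{-1}\cdot a_i$ lies in $\orb_f(a_i)$, the highlighted remark about $f$ preserving powers tells us that $b$ has power $\pm2$ in $\tau_B$ as well; but Table~\ref{table}'s row for type~2 says that if $i(a_i,b_j)\neq 0$ with $i\neq j$ then $|n_j|=1$. Applying this with $b_j=b$ (which is some curve $b_j$ with $j\neq i$, since $b\neq b_i$) forces $|n_j|=1$, contradicting $|n_j|=2$. That contradiction should close the argument.

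Concretely, the steps I would carry out are: (1) set up the re-indexed multitwists $\tau_A=\delta_{a_1}^{n_1}\cdots\delta_{a_k}^{n_k}$, $\tau_B=\delta_{b_1}^{n_1}\cdots\delta_{b_k}^{n_k}$ with $f\cdot a_i=b_i$, assume $a_i$ of type~2 and $|\orb_f(a_i)|>2$; (2) produce $b\in B\cap\orb_f(a_i)$ with $f\cdot b=a_i$, $b\neq b_i$, and note $i(a_i,b)=1$ using $\mcg(S)$-invariance of intersection numbers; (3) identify $b$ as $b_j$ for some index $j$; since $b\ne b_i$, and since two distinct curves in $B$ are distinct, $j\ne i$; (4) invoke the highlighted fact that $f$-conjugation permutes $A\cup B$ preserving the powers of the Dehn twists, so that $b_j=f^{-1}\cdot a_i$ has power $\pm2$ in $\tau_B$, i.e.\ $|n_j|=2$; (5) invoke the type~2 row of Table~\ref{table}: since $i(a_i,b_j)=1\neq 0$ and $i\ne j$, we must have $|n_j|=1$; (6) conclude $2=1$, a contradiction, hence $|\orb_f(a_i)|=2$.

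The main obstacle, and the point I'd want to double-check carefully, is step~(4): I need to be sure that "the power of $a_i$ in $\tau_A$ equals the power of $b=f^{-1}\cdot a_i$ in $\tau_B$" is genuinely a consequence of what has been established. The highlighted paragraph in \S\ref{subsec:reduccion}—``The action of $f$ by conjugation on $\tau_A$ and $\tau_B$ permutes the curves $A\cup B$ and preserves the powers of the associated Dehn twists''—is exactly this statement, so it should be available for free; the only care needed is to track that $f$ sends the $B$-curve $b$ to the $A$-curve $a_i$ with matching power, rather than, say, matching it to a curve in $A$. Since by construction $f\cdot a_i=b_i$ with $n_i=n_i$ (the original re-indexing preserves powers) and more generally $f$ carries the orbit around preserving powers, applying $f^{-1}$ to $a_i$ lands on a $B$-curve with the same power $|n_i|=2$. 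If for some reason one wanted to avoid relying on the highlighted remark, an alternative is to observe directly that conjugation $f(\cdot)f^{-1}$ carries $\delta_{a_i}^{n_i}$ to $\delta_{f\cdot a_i}^{n_i}=\delta_{b_i}^{n_i}$ and, iterating, matches powers along the whole orbit; but invoking the already-recorded remark is cleanest. Everything else in the argument is a short bookkeeping check against Table~\ref{table}, so I expect the proof to be only a few lines.
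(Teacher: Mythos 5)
Your proof is correct and takes essentially the same approach as the paper: the paper's witness for the contradiction is $b_{i_1}=f^{2}\cdot b_i$, with $i(a_i,b_{i_1})=1$ obtained from Lemma \ref{lemaInterseccionOrbitas}, whereas you use $b=f^{-1}\cdot a_i$ and the $f$-invariance of geometric intersection numbers, but both arguments conclude identically by noting that this second curve of $B$ meeting $a_i$ inherits exponent $2$ from the orbit, contradicting the type 2 row of Table \ref{table}.
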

\begin{proof}
    Recall we denoted $b_i = f\cdot a_i$, $a_{i_1} = f\cdot b_i$ and  $b_{i_1} = f\cdot a_{i_1}$. To proceed by contradiction, we assume $|\orb_f(a_i)|>2$. In this case, $b_{i_1} \neq b_i$. 
    
    Since $a_i$ is a type 2 curve, then $i(a_i,\, b_i)=1$ and it follows from Lemma \ref{lemaInterseccionOrbitas} that $i(a_i, \, b_{i_1})=1$.  Moreover, $a_i, b_i$ and $b_{i_1}$ have exponent $|n_i|=2$ in $\tau_A,\tau_B$, since  $a_i$ is of type 2 and the three curves are in the same orbit. Summarizing, $a_i\in A$ is a curve that intersects two distinct curves $b_i, \,b_{i_1}\in B$ both having exponent $|n_i|=2$ in $\tau_B$. However, such  $a_i$ cannot be a curve in a braided multitwist (see Table \ref{table}).
\end{proof}

From here, the cases of Proposition \ref{proposicionOrbitasDos} become more subtle. The next result deals with type 1 curves.

\begin{lemma}\label{lemaOrbitasInt0}
    Consider two reduced multitwists $\tau_A, \tau_B$ and let $f=\tau_A\tau_B\tau_A$. If $a_i\in A$ is a type 1 curve, then $|\orb_f(a_i)|=2$.  
\end{lemma}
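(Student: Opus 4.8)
The plan is to argue by contradiction: suppose $|\orb_f(a_i)|>2$. Since $f$ interchanges $A$ and $B$, the orbit of $a_i$ has even size $2m$ with $m\ge 2$, and I would write it as
\[
a_i=a_{i_0},\quad b_i=b_{i_0},\quad a_{i_1},\quad b_{i_1},\quad\dots,\quad a_{i_{m-1}},\quad b_{i_{m-1}},
\]
with $f\cdot b_{i_{m-1}}=a_i$; every $a_{i_k}$ is again of type 1, since curves in the same $f$-orbit have the same type.

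The first step is to show that $a_i$ is disjoint from \emph{every} curve of $\orb_f(a_i)$. The curves $a_{i_k}$ lie in $A$, so they are automatically disjoint from $a_i$. For the $B$-part, note $\ihat([a_i],[b_i])=0$ because $i(a_i,b_i)=0$ (type 1); since $b_{i_k}=f^{2k}\cdot b_i$, Lemma~\ref{lemaInterseccionOrbitas} gives $\ihat([a_i],[b_{i_k}])=0$ for all $k$. For $1\le k\le m-1$ we have $b_{i_k}\ne b_i$, so the type 1 row of Table~\ref{table} forces $i(a_i,b_{i_k})\le1$, and combined with the vanishing algebraic intersection this gives $i(a_i,b_{i_k})=0$. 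Because $\tau_A,\tau_B$ are reduced, $a_i$ meets at least two distinct curves of $B$, and by the previous point none of them lies in $\orb_f(a_i)$. Fixing one such curve $b_j$ --- so that $i(a_i,b_j)=1$ and $|n_j|=1$ --- the second part of Lemma~\ref{lemaInterseccionOrbitas} shows $i(a_i,f^{2k}\cdot b_j)=1$ for every $k$, and pushing forward by the homeomorphisms $f^{\ell}$ transports this intersection pattern around the whole orbit: each $a_{i_k}$ meets the curve $f^{2k}\cdot b_j\in B$ in one point, and each $b_{i_k}$ meets $f^{2k+1}\cdot b_j\in A$ in one point. I would also record, applying Proposition~\ref{prop:formulaEscondidaIvanov} to the pair $(b_i,\tau_A)$ --- using $\tau_B\cdot b_i=b_i$ and the braid relation to rewrite $i(b_i,\tau_A\cdot b_i)=i(b_i,f\cdot b_i)=i(b_i,a_{i_1})=0$ --- that $b_i$, exactly like $a_i$, meets only curves of exponent $\pm1$ and satisfies $X(b_i,\tau_A)=0$.

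The technical heart, and the step I expect to be the main obstacle, is turning this into a contradiction. The structural input I would exploit is that $X(a_i,\tau_B)=0$, together with $|n_j|=1$ for every curve of $B$ met by $a_i$, forces these curves to \emph{alternate in the sign of their exponent} as one travels once around $a_i$ --- hence they are even in number and their exponents sum to zero --- and the same holds for $b_i$ and, by $f$-equivariance, for every curve of $\orb_f(a_i)$. I would then play this alternation off against the homological identity $\ihat([a_i],f^2\cdot[b])=-\ihat([a_i],[b])$ of Lemma~\ref{lemaInterseccionOrbitas}: applying it to the curves met by $a_i$, keeping track of how $f^2$ permutes them and of the induced orientations, and following the sign pattern around the orbit until it must close up, one should be driven into an inconsistency --- for instance a curve of the configuration meeting two distinct curves of exponent $2$, or a sign pattern that cannot close --- contradicting the classification in Table~\ref{table}. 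An alternative I would try first is to show directly that an orbit of size greater than $2$ produces, somewhere in the configuration, a curve of type $2$, $4$ or $5$ whose own $f$-orbit has size greater than $2$, contradicting Lemma~\ref{lemaOrbitasInt2} or Lemma~\ref{lemaOrbitasNi2}. The delicate part in either approach is the bookkeeping of signs and intersection numbers around the orbit and the exclusion of the degenerate ways the orbit could close.
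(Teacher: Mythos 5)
Your setup is sound and matches the first half of the paper's argument: every curve in $\orb_f(a_i)$ is again of type 1, and Lemma~\ref{lemaInterseccionOrbitas} combined with the type 1 constraints ($i(a_i,b)\in\{0,1\}$ for all $b\in B$) shows that $a_{i_1}=f^2\cdot a_i$ has the same geometric intersection with every curve of $B$ as $a_i$ does; in particular $a_{i_1}$ is disjoint from $b_i$ and from $b_{i_1}$, i.e.\ from $\tau_B^{\pm1}\cdot a_i$. But the proof stops exactly where it needs to start: you never actually derive a contradiction, and you say so yourself. The two routes you sketch for the ``technical heart'' do not close. The sign-alternation of exponents around $a_i$ plus the identity $\ihat([a_i],f^2\cdot[b])=-\ihat([a_i],[b])$ only yields consistency conditions (e.g.\ that a $b_j$ met by $a_i$ with $f$-orbit of size $2\ell$ must satisfy $f^{2\ell}\cdot[b_j]=(-1)^\ell[b_j]$), not an inconsistency; and the hope of producing a curve of type 2, 4 or 5 with large orbit has no mechanism behind it --- all curves in sight can perfectly well be of type 1.

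The missing idea is topological, not homological. Let $T_i$ be a regular neighbourhood of $a_i\cup\bigcup_{b\in B_i}b$, a punctured torus. The conditions $i(a_{i_1},\tau_B\cdot a_i)=i(a_{i_1},\tau_B^{-1}\cdot a_i)=0$ force $a_{i_1}$ to be either contained in $T_i$ or disjoint from it, because $\tau_B\cdot a_i$ and $\tau_B^{-1}\cdot a_i$ (together with $a_i$) cut $T_i$ into pieces that any arc entering and leaving $T_i$ must cross. Since $i(a_{i_1},\overline b_1)=i(a_i,\overline b_1)=1$, the curve $a_{i_1}$ meets $T_i$ and hence lies inside it; and the only curve inside $T_i$ disjoint from both $\tau_B\cdot a_i$ and $\tau_B^{-1}\cdot a_i$ is $a_i$ itself. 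This gives $a_{i_1}=a_i$ and $|\orb_f(a_i)|=2$. Without this containment step (or a substitute for it), your proposal is a plan rather than a proof.
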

\begin{proof}
    Consider the curve $a_{i_1}=f^2 \cdot a_i$ and the set of curves $B_i = \{b \in B| \: i(a_i,b)\neq 0\}$. Given that $a_i$ is a type 1 curve, we know every  $b\in B_i$ has exponent $\pm 1$ in $\tau_B$ and satisfies $i(a_i, b)=1$ (see Table \ref{table}). Furthermore, $X(a_i, \tau_B)=0$ and it follows that  $|B_i|$ is even.
    
    Let $T_i$ be an open regular neighborhood of  $a_i \cup \bigcup_{b\in B_i}b$. Note the subsurface $T_i$ is homeomorphic to a torus with punctures. Since $X(a_i, \tau_B)=0$,  if two curves  $\overline{b}_j, \overline{b}_{j+1}\in B_i$ bound a once punctured annulus in $T_i$, then their powers in $\tau_B$ have opposite signs.   We represent $T_i$ as a planar torus in Figure \ref{figToroPlano0}, where we denote $\overline{b}_j$ the curves in $B_i$ and the exponents correspond to those in $\tau_B$.

    	\begin{figure}[h]
    		\labellist
    		\small\hair 2pt
    		 \pinlabel {$a_i$} [ ] at 530 272
    		\pinlabel {$\tau_B\cdot a_i$} [ ] at 0 172
    		\pinlabel {$\tau_B^{-1}\cdot a_i $} [ ] at 0 90
    		\pinlabel {$\overline{b}_1^{+1}$} [ ] at 95 17
    		\pinlabel {$\overline{b}_2^{-1}$} [ ] at 206 17
    		\pinlabel {$\overline{b}_3^{+1}$} [ ] at 306 17
    		\pinlabel {$\overline{b}_4^{-1}$} [ ] at 417 17
    		\pinlabel {$\overline{b}_5^{+1}$} [ ] at 525 17
    		\pinlabel {$\overline{b}_6^{-1}$} [ ] at 632 17
    	\endlabellist
  		\centering
        \includegraphics[ width=0.7\linewidth, keepaspectratio]{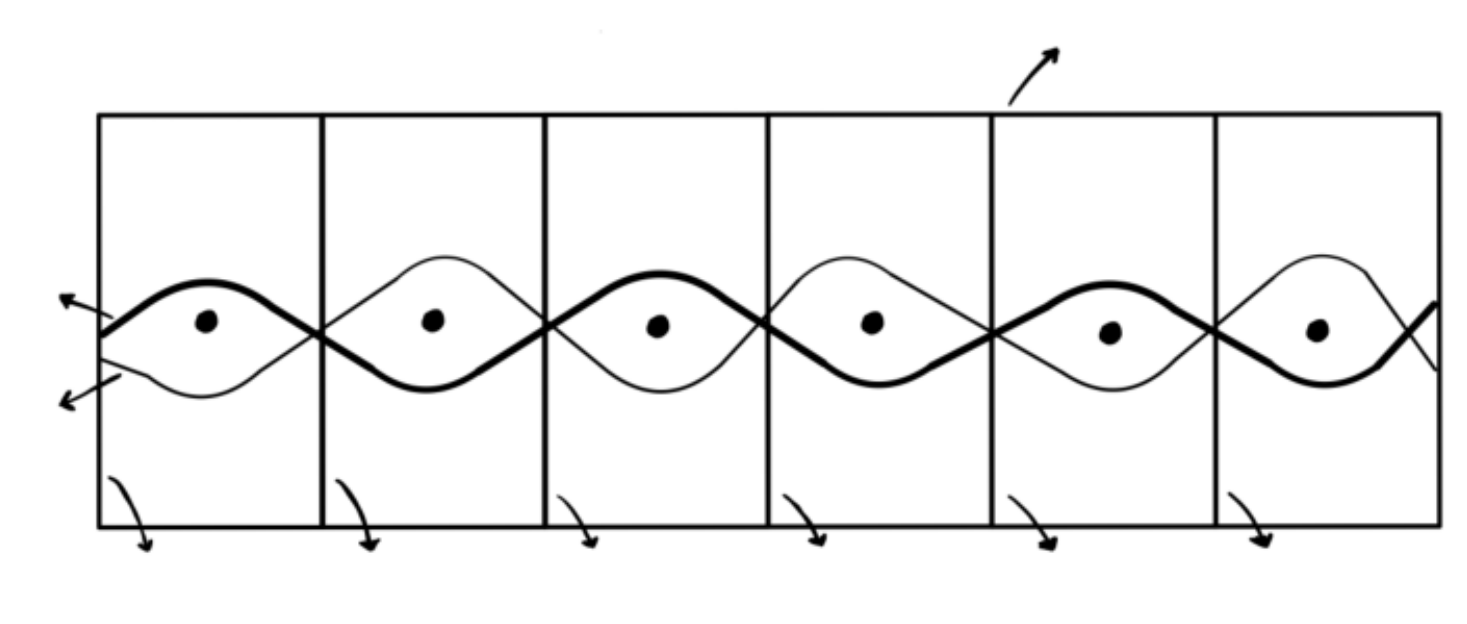}
        \caption{Planar torus $T_i$ for $i(a_i,b_i)=0$.}
        \label{figToroPlano0}
    \end{figure}

    Given that $a_i$ is a type 1 curve, we have that  $i(a_i, b)\in \{0,1\}$ for every $b\in B$. Since $a_{i_1}$ is in the orbit of $a_i$, then $a_{i_1}$ is also of type 1 and  $i(a_{i_1}, b)\in \{0,1\}$ for every $b\in B$.  It is now a consequence of Lemma \ref{lemaInterseccionOrbitas}  that $i(a_i, b)=i(a_{i_1}, b)$ for every $b\in B$. In particular, we know that  $i(a_{i_1}, b)= i(a_i,b)=i(a_i, b_i)=0$ for every  $b\in \orb_f(a_i)$. This implies 
    \begin{equation*}
        \begin{cases}
            i(a_{i_1},\, \tau_B\cdot a_i) = i(a_{i_1},\,b_i) = 0, \\ 
            i(a_{i_1}, \,\tau_B^{-1}\cdot a_i ) = i(a_{i}, \, \tau_B\cdot a_{i_1}) = i( a_i, b_{i_1})=0.
        \end{cases}
    \end{equation*}

    Notice the conditions ensure that  $a_{i_1}$ is either contained in $T_i$ or disjoint from $T_i$.  Indeed, any curve $c$ that intersects $T_i$ and is not contained in $T_i$ either satisfies  $i(c,\, \tau_B\cdot a_i)\ne 0$ or $i(c, \,\tau_B^{-1}\cdot a_i )\ne 0$ (see Figure \ref{figToroPlano0}). Now,  $i(a_{i_1}, \, \overline{b}_1)=i(a_i, \overline{b}_1)=1$ implies $a_{i_1}$ intersects $T_i$ and therefore $a_{i_1}$ is  contained in $T_i$.  

    To finish, we note that the only curve $a_{i_1}$ in $T_i$ satisfying $i(a_{i_1},\, \tau_B\cdot a_i) = i(a_{i_1}, \, \tau_B^{-1}\cdot a_i)=0$ is the curve $a_{i_1}=a_i$. So we conclude $|\orb_f(a_i)|=2$.
\end{proof}

Next lemma proves Proposition \ref{proposicionOrbitasDos} for type 4 curves.

\begin{lemma}\label{lemaOrbitasInt1X0}
    Consider two reduced multitwists $\tau_A, \tau_B$ and let $f=\tau_A\tau_B\tau_A$. If $a_i\in A$ is a type 4 curve, then $|\orb_f(a_i)|=2$.  
\end{lemma}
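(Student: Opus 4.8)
The plan is to mimic the structure of the type 1 proof in Lemma \ref{lemaOrbitasInt0}, adapting it to the extra curve $b_l$ with exponent $|n_l|=2$ that a type 4 curve carries. Suppose for contradiction that $|\orb_f(a_i)|>2$, and set $a_{i_1}=f^2\cdot a_i \ne a_i$. By Proposition \ref{propClasificacion} and the remark that curves in the same $f$-orbit have the same type, $a_{i_1}$ is again of type 4, so it has its own distinguished curve $b'\in B$ with $i(a_{i_1},b')=1$ and $|n'|=2$, while intersecting all other curves of $B$ in at most one point. First I would record, exactly as in the type 1 case, that Lemma \ref{lemaInterseccionOrbitas} forces $i(a_i,b)=i(a_{i_1},b)$ for every $b\in B$: indeed both sides lie in $\{0,1,2\}$ by the classification, and whenever one is $1$ the other is at least $1$ by the lemma, while the intersection-$2$ value is pinned down by type 4 allowing exactly one curve of intersection $2$. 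In particular $b'$ must equal $b_l$, the distinguished curve of $a_i$ itself, since $i(a_{i_1},b_l)=i(a_i,b_l)=2$ and a type 4 curve has only one such partner.

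Next I would build the model subsurface. Let $B_i=\{b\in B\mid i(a_i,b)\ne 0\}$ and let $T_i$ be an open regular neighborhood of $a_i\cup\bigcup_{b\in B_i}b$. Because $X(a_i,\tau_B)=0$, every subarc of $a_i$ between consecutive intersection points hits two curves of opposite sign — except near $b_l$, which is crossed twice — so $T_i$ is again a punctured torus, and I would draw the analogue of Figure \ref{figToroPlano0} with the curve $b_l^{\pm 2}$ sitting in the distinguished slot where $a_i$ meets it twice, and the remaining $\overline b_j^{\pm 1}$ alternating in sign around the torus. The key geometric observation, identical in spirit to the type 1 argument, is that any curve $c$ which meets $T_i$ but is not contained in $T_i$ must have positive intersection with $\tau_B\cdot a_i$ or with $\tau_B^{-1}\cdot a_i$. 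Then I would compute, using $i(a_{i_1},b)=i(a_i,b)=0$ for $b\in\orb_f(a_i)$,
\[
\begin{cases}
i(a_{i_1},\,\tau_B\cdot a_i)=i(a_{i_1},\,b_i)=0,\\
i(a_{i_1},\,\tau_B^{-1}\cdot a_i)=i(a_i,\,\tau_B\cdot a_{i_1})=i(a_i,\,b_{i_1})=0,
\end{cases}
\]
so $a_{i_1}$ is either contained in $T_i$ or disjoint from it; since $i(a_{i_1},b)=i(a_i,b)>0$ for some $b\in B_i$, it is contained in $T_i$. Finally, inside the punctured torus $T_i$ the only curve $c$ with $i(c,\tau_B\cdot a_i)=i(c,\tau_B^{-1}\cdot a_i)=0$ is $c=a_i$ itself, contradicting $a_{i_1}\ne a_i$; hence $|\orb_f(a_i)|=2$.

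The step I expect to be the main obstacle is verifying that $T_i$ is really a punctured torus and that the "no escape from $T_i$ without hitting $\tau_B^{\pm1}\cdot a_i$" property still holds once one of the slots is occupied by a doubly-intersecting curve $b_l$ of exponent $2$: the combinatorics of the arcs of $a_i$ and the signs of the adjacent $\overline b_j$ is slightly different near $b_l$ than in the type 1 picture, and I would need to check carefully that the relevant annuli between consecutive $b$-curves are still once-punctured (forcing alternating signs away from $b_l$) and that $a_i$ traverses the $b_l$-slot in a way that does not disconnect the torus. A secondary point requiring care is the identification $b'=b_l$: I must make sure that the equality $i(a_i,b)=i(a_{i_1},b)$ for all $b$ genuinely follows from Lemma \ref{lemaInterseccionOrbitas} together with type 4 of both $a_i$ and $a_{i_1}$, in particular that exactly one curve realizes intersection number $2$ with each, so the distinguished partners coincide and the model subsurface $T_i$ is simultaneously adapted to $a_i$ and $a_{i_1}$.
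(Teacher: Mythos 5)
Your proposal transplants the type~1 computation into a setting where its key numerical inputs are false, so as written the argument does not go through. Two points. First, you have misread what distinguishes the curve $b_l$ for a type~4 curve: Table \ref{table} says $i(a_i,b_l)=1$ with \emph{exponent} $|n_l|=2$, not $i(a_i,b_l)=2$ (intersection number $2$ with $b_i$ is the type~5 case). So your identification of the two distinguished partners via ``$i(a_{i_1},b_l)=i(a_i,b_l)=2$'' rests on a false premise; the correct route is that $i(a_i,b)=i(a_{i_1},b)$ for all $b\in B$ (which does follow from Lemma \ref{lemaInterseccionOrbitas} since all intersection numbers here are $0$ or $1$) together with the uniqueness of the exponent-$2$ curve among those meeting $a_i$.

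Second, and more seriously, the displayed system $i(a_{i_1},\tau_B\cdot a_i)=i(a_{i_1},b_i)=0$ and $i(a_{i_1},\tau_B^{-1}\cdot a_i)=0$ is wrong for type~4: since $i(a_i,b_i)=1$ and $b_i, b_{i_1}$ lie in $\orb_f(a_i)$, one gets $i(a_{i_1},b_i)=1$ and $i(a_i,b_{i_1})=1$. Consequently the dichotomy ``$a_{i_1}$ is contained in $T_i$ or disjoint from $T_i$'' — which in the type~1 proof is exactly what the vanishing of those two intersection numbers buys — is not available: $a_{i_1}$ genuinely crosses each of $\tau_B\cdot a_i$ and $\tau_B^{-1}\cdot a_i$ once. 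The paper's proof records the correct system (with $1$'s in those two lines, plus $i(a_{i_1},\overline b_j)=1$ and $i(a_{i_1},a_i)=0$) and then pins down $a_{i_1}=a_i$ by a direct inspection in the planar torus $T_i$, anchoring the argument at the intersection point of $a_{i_1}$ with the unique exponent-$2$ curve $\overline b_k$ to force $a_{i_1}\subset T_i$ and then $a_{i_1}\simeq a_i$. To repair your write-up you would need to replace the containment dichotomy by this kind of arc-by-arc tracing of $a_{i_1}$ through $T_i$ subject to the correct constraints.
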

\begin{proof}
    The strategy of the proof is analogous to that of Lemma \ref{lemaOrbitasInt0}. Consider $a_{i_1}=f^2\cdot a_i$ and the set of curves  $B_i = \{b\in B | \; i(b,a_i) \ne 0\}$. Since $a_i$ is of type 4, it follows  that $i(b, a_i)=1$ for every $b\in B_i$, that $|B_i|$ is even and every curve in $B_i$ has power $\pm 1$  in $\tau_B$, except for a single curve which has power $\pm 2$ (see Table \ref{table}).  
    
    Let $T_i$  be an open regular neighbourhood of $a_i \cup\bigcup_{b\in B_i}b$. The above paragraph implies  $T_i$ is a  torus with punctures. Since $X(a_i, \, \tau_B)=0$, then  two curves  $\overline{b}_j, \overline{b}_k \in B_i$ bounding a single punctured annulus in $T_i$ have powers of opposite sign in $\tau_B$. In Figure \ref{figToroPlanoInt1X0} we represent $T_i$ as a planar torus, where we label $\overline{b}_j$ the curves in $B_i$ and the exponents correspond to those in $\tau_B$.
    
    \begin{figure}[h]
    	\labellist
    	\small\hair 2pt
    	\pinlabel {$a_i$} [ ] at 492 268
    	\pinlabel {$\tau_B\cdot a_i$} [ ] at 712 184
    	\pinlabel {$\tau_B^{-1}\cdot a_i$} [ ] at 712 86
    	\pinlabel {$\overline{b}_1^{+2}$} [ ] at 48 11
    	\pinlabel {$\overline{b}_2^{-1}$} [ ] at 160 11
    	\pinlabel {$\overline{b}_3^{+1}$} [ ] at 266 11
    	\pinlabel {$\overline{b}_4^{-1}$} [ ] at 380 11
    	\pinlabel {$\overline{b}_5^{+1}$} [ ] at 495 11
    	\pinlabel {$\overline{b}_6^{-1}$} [ ] at 600 11
    	\endlabellist
    	\centering
        \includegraphics[width=0.7\linewidth]{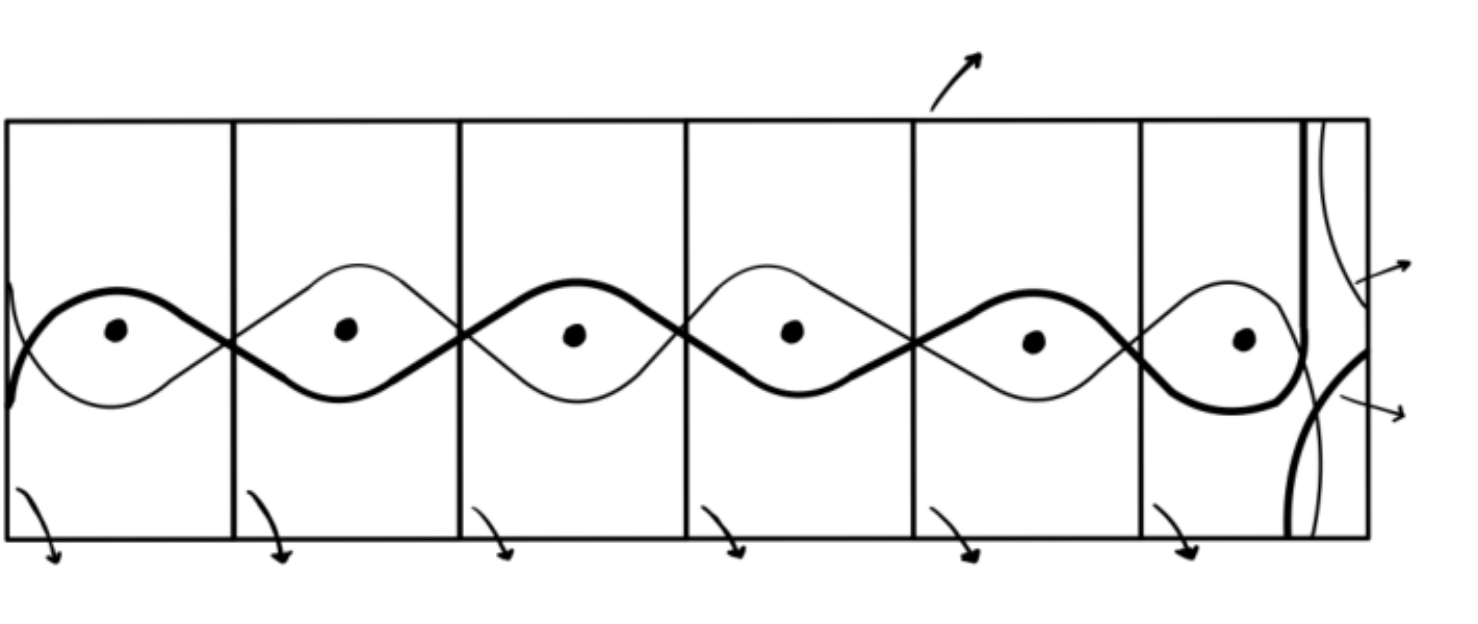}
        \caption{Planar torus $T_i$ for $i(a_i,b_i)=1$ and $X(a_i,\tau_B)=0$.}
        \label{figToroPlanoInt1X0}
      
    \end{figure}
    
    Given that $a_i$ and $a_{i_1}$ are type 4 curves, then $i(a_i, b), i(a_{i_1, b})\in \{0,1\}$  for every $b\in B$. Thus,  Lemma \ref{lemaInterseccionOrbitas} implies $i(a_i, b)=i(a_{i_1}, b)$ for every $b\in B$. It follows $a_{i_1}$ is a curve satisfying 

    \begin{equation*}
        \begin{cases}
            i(a_{i_1}, \, \overline{b}_j)=1, \\
            i(a_{i_1}, \, a_i)=0,\\
            i(a_{i_1},\, \tau_B\cdot a_i) = i(a_{i_1},\,b_i) = 1, \\ 
            i(a_{i_1}, \,\tau_B^{-1}\cdot a_i ) = i(a_{i}, \, \tau_B\cdot a_{i_1}) = i( a_i, b_{i_1})=1.
        \end{cases}
    \end{equation*}
    The only curve satisfying above equations is $a_{i_1}=a_i$ (see Figure \ref{figToroPlanoInt1X0}). One may check this fact by considering the point of intersection between $a_{i_1}$ and $\overline{b}_k$, where $\overline{b}_k\in B_i$ is the only curve in $B_i$ with exponent two. Then, by inspection, it is clear $a_{i_1}$ is contained in $T_i$. Readily, we obtain that $a_{i_1}$ and $a_i$ are homotopic, that is, $a_{i_1}=a_i$. Thus,  $|\orb_f(a_i)|=2$.
\end{proof}

Lastly, we consider the longer case of type  3 curves. We work through this case in the next three lemmas.

\begin{lemma}\label{lemaCansao}
    Consider two reduced multitwists $\tau_A, \tau_B$ and let $f=\tau_A\tau_B\tau_A$. If $a_i\in A$ is a type 3 curve and $|\orb_f(a_i)|>2$, then there exist $b_k\in B$ that intersects  $a_i$ and satisfies $|\orb_f(b_k)|=2$. Also, any other  $b\in B\setminus\{b_k\}$ intersecting $a_i$ satisfies $|\orb_f(b)| = 4$. 
\end{lemma}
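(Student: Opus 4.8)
The plan is to exploit the $f$-orbit structure together with the classification in Table \ref{table}, working consistently with the homological intersection invariant from Lemma \ref{lemaInterseccionOrbitas}. Since $a_i$ is a type 3 curve, we know $i(a_i,b)\in\{0,1\}$ for every $b\in B$, that $X(a_i,\tau_B)=1$, and (crucially for type 3) that $i(a_i,b_i)=1$ with $|n_i|=1$, where $b_i=\tau_A\tau_B\cdot a_i$. Because all curves in an $f$-orbit have the same type and the same Dehn-twist power, every curve in $\orb_f(a_i)$ is type 3 with power $\pm1$. The hypothesis $|\orb_f(a_i)|>2$ means the orbit contains $a_i,b_i,a_{i_1},b_{i_1},\dots$ with $a_{i_1}=f^2\cdot a_i\neq a_i$. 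First I would record, via Lemma \ref{lemaInterseccionOrbitas} and the bound $i(a,b)\le 1$ for curves intersecting $a_i$, that $i(a_i,f^{2k}\cdot b)=i(a_i,b)$ for every $b\in B$ and every $k$; so $a_i$ meets $b$ iff it meets every curve in $\orb_f(b)\cap B$ reached by even powers of $f$.

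Next I would isolate the curve $b_k$. The defining feature of $X(a_i,\tau_B)=1$ for a type 3 curve is that exactly one of the consecutive-arc sign products along $a_i$ is positive; equivalently, among the (even number of) curves of $B$ that $a_i$ crosses, there is exactly one ``sign coincidence'', and this singles out a distinguished curve — I would argue this distinguished curve is precisely $b_k=f\cdot a_i=b_i$ itself, or more precisely the curve that is forced to lie in the same $f$-orbit as $a_i$. Concretely: $a_i$ and $a_{i_1}=f^2a_i$ both intersect $b_i$, because $i(a_{i_1},b_i)=i(a_i,b_i)=1$ by the even-power invariance; similarly $a_i$ intersects $b_{i_1}=f^2 b_i$ since $i(a_i,b_{i_1})=i(f^{-2}a_i, b_i)$... — here I need to be careful about which direction of $f$ preserves the intersection with $a_i$, and this is where Lemma \ref{lemaInterseccionOrbitas} does the real work, guaranteeing $i(a_i,f^{2k}b)=i(a_i,b)$. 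So all the curves $\{f^{2k}\cdot b_i\}$ intersect $a_i$; since $a_i$ can meet only finitely many curves of $B$ and since the $f$-orbit of $b_i$ is finite, the set $\orb_f(b_i)\cap B$ consists of the curves $f^{2k}b_i$, all of which meet $a_i$. I would then show this $B$-part of $\orb_f(b_i)$ has size $1$ (so $|\orb_f(b_i)|=2$): if it had size $\ge 2$, then $a_i$ would be a type 3 curve meeting at least two distinct curves $b_i, f^2b_i\in B$, which is allowed by Table \ref{table} only if... — no: re-examining, a type 3 curve with $X=1$ can meet several curves of $B$, so the contradiction must instead come from counting with the arc/$X$ structure. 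The cleaner route: the single positive sign-product arc of $a_i$ sits between the two ``ends'' along the partition, and $f$ acting on this configuration together with $|\orb_f(a_i)|>2$ forces the orbit of $b_i$ restricted to $B$ to be a single curve, because otherwise one produces a second point of $X(a_i,\tau_B)$, contradicting $X(a_i,\tau_B)=1$. I would make this precise by the planar-torus/neighborhood picture as in Lemmas \ref{lemaOrbitasInt0} and \ref{lemaOrbitasInt1X0}, restricted to the once-punctured-annulus combinatorics forced by $X=1$.

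Finally, for the remaining curves: let $b\in B\setminus\{b_k\}$ with $i(a_i,b)=1$. We have $|\orb_f(b)|$ is even (orbits alternate between $A$- and $B$-curves, and here $f$ has even-length orbits on $A\cup B$ since $f\cdot a_i=b_i$, $f\cdot b_i=a_{i_1}$, etc., returning to $a_i$ after an even number of steps). It is $\ge 4$ because $b\ne b_k$ and $b$ is not fixed-up-to-the-pair with $a_i$ the way $b_k$ is: if $|\orb_f(b)|=2$ then $f^2 b=b$ and $f\cdot(f\cdot a_i)$-type reasoning forces $b$ into the orbit of $b_i$, hence $b=b_k$, a contradiction. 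To rule out $|\orb_f(b)|\ge 6$ I would again invoke the $X(a_i,\tau_B)=1$ bookkeeping: each ``new'' even-power translate of $b$ that still meets $a_i$ contributes its own arc to the partition of $a_i$, and the sign pattern (governed by the common power $\pm1$ and the alternation of signs around the punctured annuli in $T_i$, exactly as in Figure \ref{figToroPlano0}) would force $X(a_i,\tau_B)\ge 2$ unless the orbit closes up after four steps. Hence $|\orb_f(b)|=4$.

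\medskip

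\noindent\textbf{Main obstacle.} The delicate point is the middle step: converting the numerical identity $X(a_i,\tau_B)=1$ into the structural statement that $\orb_f(b_i)\cap B$ is a single curve while every other intersecting $b$ has a size-$4$ orbit. This requires tracking not just intersection numbers but the cyclic order of intersection points of $a_i$ with $B$ and the sign pattern of the powers — i.e., genuinely using Ivanov's hidden formula (Proposition \ref{prop:formulaEscondidaIvanov}) in the refined form where the arcs $a_0,\dots,a_m$ of $a_i$ and their $x_j$-values are permuted by $f$. I expect to need a short lemma saying that $f$ (restricted to the relevant punctured torus $T_i$ built from $a_i$ and the $b$'s meeting it) acts on the arc-partition of $a_i$ compatibly with the $x_j$, so that the orbit of the unique arc with $x_j=1$ is forced to have the special small size while the arcs with $x_j=0$ come in $f^2$-orbits of size dividing the length of $\orb_f(b)$. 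Everything else is the same torus-with-punctures picture already used for types 1 and 4.
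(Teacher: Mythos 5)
Your proposal contains a concrete error that derails the middle step, and the part you flag as the ``main obstacle'' is exactly the content of the lemma, which you do not supply. First, the error: you propose that the distinguished curve with $|\orb_f(b_k)|=2$ is $b_i=f\cdot a_i$ itself, ``the curve that is forced to lie in the same $f$-orbit as $a_i$.'' This is impossible: $b_i\in\orb_f(a_i)$, orbits partition $A\cup B$, and the hypothesis is $|\orb_f(a_i)|>2$, so $|\orb_f(b_i)|=|\orb_f(a_i)|>2$. In fact the lemma forces $|\orb_f(b_i)|=4$, and $b_k$ turns out to be a different curve of $B_i$ (the ``middle'' one in the cyclic order along $a_i$). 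Relatedly, your parity claim is backwards: for a type 3 curve, $X(a_i,\tau_B)=1$ forces $|B_i|$ to be \emph{odd} (in a cyclic sign sequence the number of same-sign adjacencies has the parity of the length), not even; this oddness is precisely what later produces exactly one fixed curve.

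The missing idea is the following. Build the punctured torus $T_i$ around $a_i\cup\bigcup_{b\in B_i}b$ and order $B_i$ cyclically as $\overline{b}_1,\dots,\overline{b}_{|B_i|}$ with alternating signs, so that $\overline{b}_1$ and $\overline{b}_{|B_i|}$ are the unique adjacent pair with equal sign. The intersection constraints (from Proposition \ref{propClasificacion} and Lemma \ref{lemaInterseccionOrbitas}) pin down $a_{i_1}=f^2\cdot a_i$ inside $T_i$ up to the two arcs of Figure \ref{figai1}. Then one tracks the unique subarc $\gamma_i$ of $a_i$ joining two same-sign curves: since $f^2$ preserves powers, $f^2\cdot\gamma_i$ must be the unique such subarc $\gamma_{i_1}$ of $a_{i_1}$, so $f^2$ either fixes $\overline{b}_1,\overline{b}_{|B_i|}$ or swaps them. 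Propagating along adjacent subarcs, the first case gives $f^2\cdot\overline{b}_j=\overline{b}_j$ for all $j$, hence $|\orb_f(a_i)|=2$, a contradiction; the second gives $f^2\cdot\overline{b}_j=\overline{b}_{|B_i|+1-j}$, a reflection of an odd-length cyclic order, which has exactly one fixed curve ($b_k=\overline{b}_{(|B_i|+1)/2}$, orbit size $2$) and pairs up all the others (orbit size $4$). Your closing heuristic — that a sixth orbit element would force $X(a_i,\tau_B)\ge 2$ — is not a proof and is not how the size-$4$ bound arises; it comes from $f^2$ acting as an involution on $B_i$. Without the reflection argument the lemma is not established.
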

\begin{proof}
    Let $B_i = \{b\in B|\:i(a_i,b)\ne 0\}$ and $T_i$ an open regular neighbourhood of the curves $a_i\cup \bigcup_{b\in B_i} b$. Given that $a_i$ is of type 3, we deduce $|B_i|$ is odd (see Table \ref{table}) and $T_i$ is a torus with $|B_i|$ punctures. 

    The intersection points of $a_i$ with curves in $B_i$ induce a cyclic order in $B_i$. Considering this order, we may write $\overline{b}_1, \overline{b}_2, \dots, \overline{b}_{|B_i|}$ to denote the curves in $B_i$. Since $X(a_i, \,\tau_B)=1$, we can arrange the subindices in $\overline{b}_j$ so that the powers in $\tau_B$ alternate sign. Even though the signs alternate, the curves $\overline{b}_1$ and $\overline{b}_{|B_i|}$ have the same sign since $|B_i|$ is odd.  In Figure  \ref{figEntRegX1} we represent $T_i$ as a planar torus. 
    \begin{figure}[h]
    	\labellist
    	\small\hair 2pt
    	\pinlabel {$a_i$} [ ] at 365 295
    	\pinlabel {$\tau_B\cdot a_i$} [ ] at 693 182
    	\pinlabel {$\tau_B^{-1}\cdot a_i$} [ ] at 693 112
    	\pinlabel {$\overline{b}_1^{+1}$} [ ] at 62 7
    	\pinlabel {$\overline{b}_2^{-1}$} [ ] at 184 7
    	\pinlabel {$\overline{b}_3^{+1}$} [ ] at 305 7
    	\pinlabel {$\overline{b}_4^{-1}$} [ ] at 432 7
    	\pinlabel {$\overline{b}_5^{+1}$} [ ] at 552 7
    	\endlabellist
    	\centering
        \includegraphics[width=0.7\linewidth]{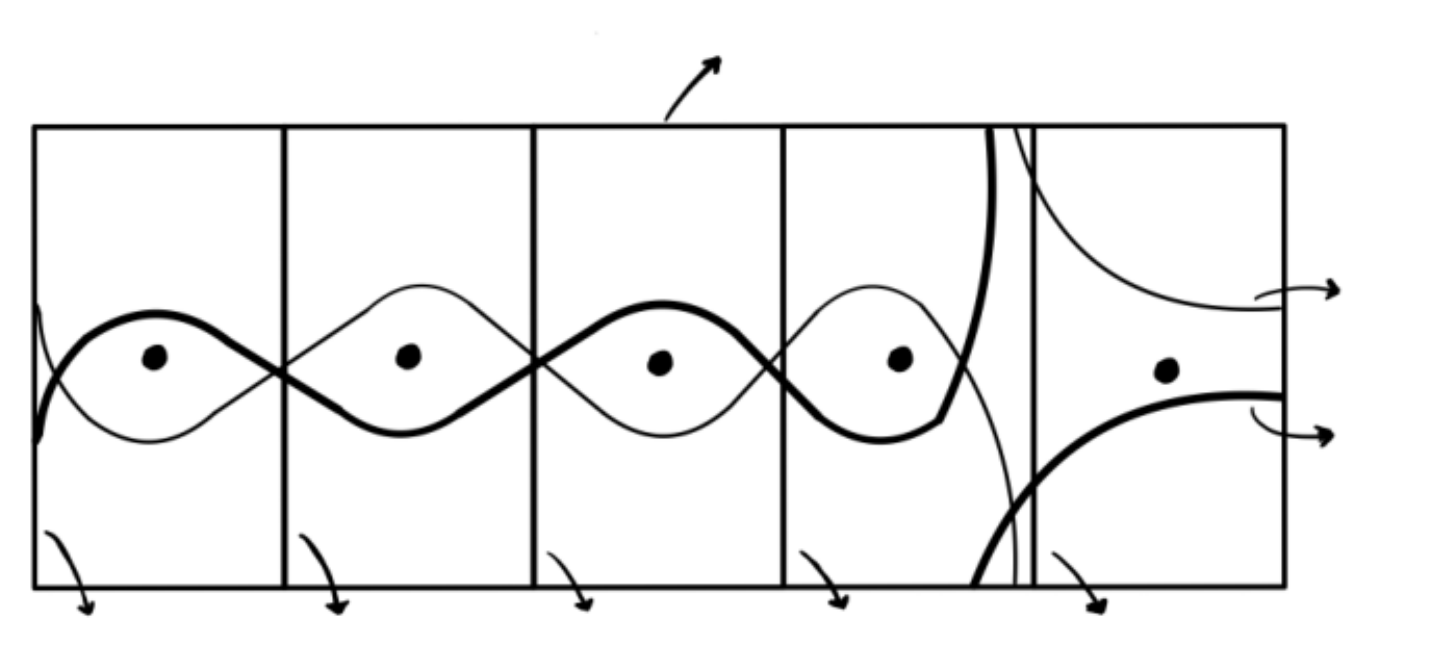}
        \caption{The planar torus $T_i$ for $i(a_i,b_i)=1$ and $X(a_i,\tau_B)=1.$}
        \label{figEntRegX1}
    \end{figure}

    Let $a_{i_1}=f^2\cdot a_i \in A$. By hypothesis $a_{i_1}\ne a_i$. As in previous lemmas, we may use Proposition \ref{propClasificacion}  and Lemma \ref{lemaInterseccionOrbitas} to check that  $a_{i_1}$ satisfies the  equations
    \[
    \begin{cases}
        i(a_{i_1}, a_i)=0, \\
        i(a_{i_1}, b)=i(a_i,b) &\forall b\in B, \\
        i(a_{i_1}, \tau_B\cdot a_i)=1, \\ 
        i( a_{i_1},\, \tau_B^{-1}\cdot a_i)=1. \\ 
    \end{cases}    
    \]
    From these conditions it follows that either $a_{i_1}=a_i$ or $a_{i_1}\cap T_i$ has a representative as in Figure \ref{figai1}. Namely,  $a_{i_1}$ has a representative in minimal position with curves in $A\cup B$, therefore disjoint from $a_i$ and intersecting once every $b\in B_i$; also,  $a_{i_1}\cap T_i$ is an arc with endpoints in a single puncture of $T_i$ and   $T_i\setminus (a_i \cup a_{i_1})$  is the union of an annulus and a $|B_i|-1$ punctured annulus. To check that $a_{i_1}=a_i$ or $a_{i_1}\cap T_i$ has a representative as in Figure \ref{figai1}, we may consider the point of intersection of $a_{i_1}$ with $\overline{b}_{|B_i|}$ (see Figure \ref{figEntRegX1}). Then, by inspection, we see that a curve $a_{i_1}$ satisfying above equations must be as claimed.
    
    \begin{figure}[h]
    	\labellist
    	\pinlabel {$\gamma_i$} [ ] at 437 429
    	\pinlabel {$\gamma_{i_1}$} [ ] at 549 353
    	\pinlabel {$a_{i_1}$} [ ] at 560 219
    	\endlabellist
    	\centering
        \includegraphics[width=0.6\linewidth]{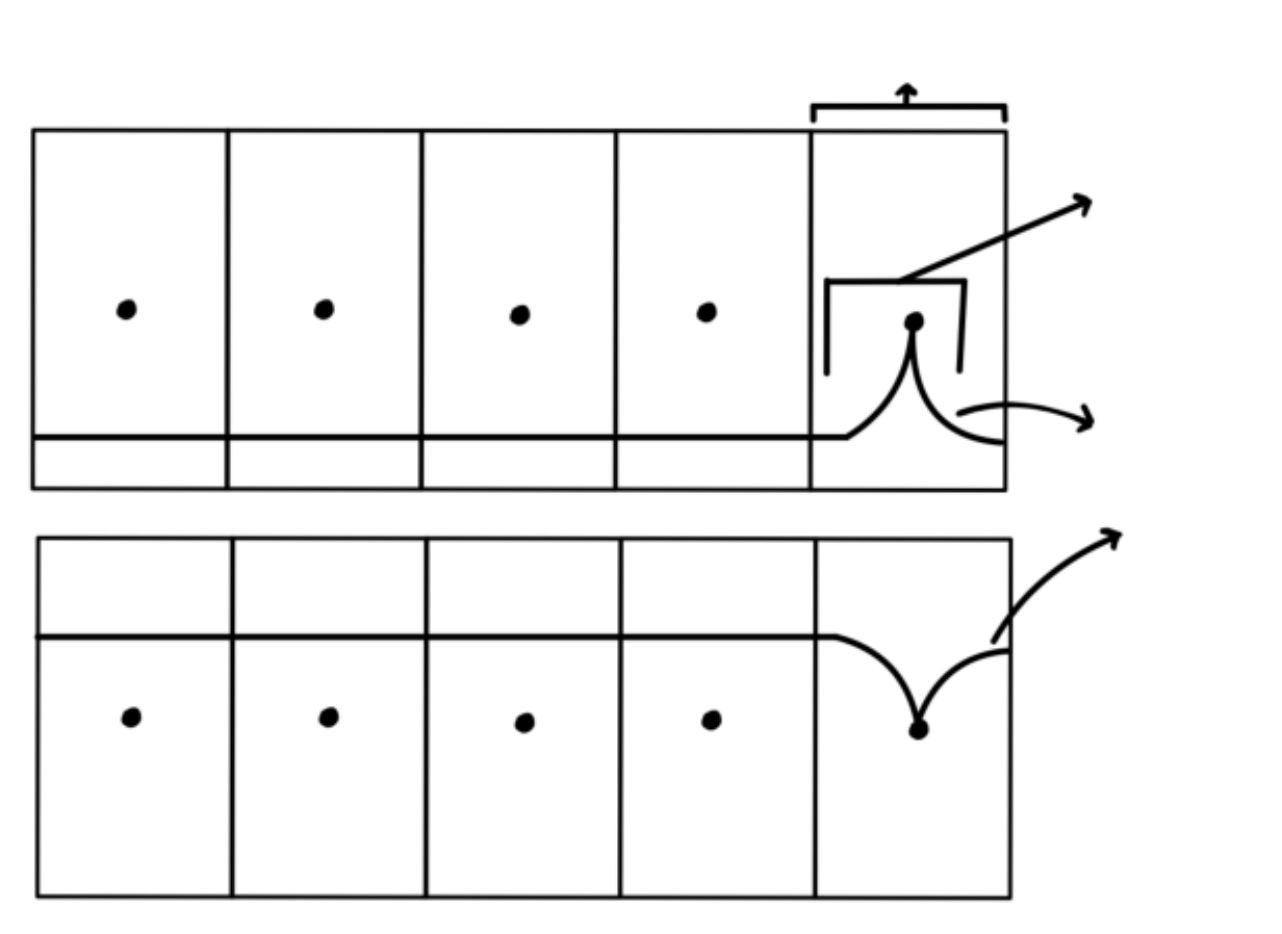}
        \caption{The two possible arcs for $a_{i_1}\cap T_i$.}
        \label{figai1}
    \end{figure}

    (We warn the reader that in the next paragraphs we silently use the Alexander Method \cite[Proposition 2.8]{farb_primer_2012}. We are taking representatives of $a_i$, $a_{i_1}$, every $b\in B_i$ and a representative of $f^2$, so that $f^2$ permutes the curves $b\in B_i$ and sends $a_i$ to $a_{i_1}$.)

    Now, denote  $\gamma_i$  the subarc of $a_i$ that goes from  $a_i\cap \overline{b}_{|B_i|}$  to $a_i \cap \overline{b}_{1}$ and is disjoint from the rest of $\overline{b}_j$'s. In the same style, define $\gamma_{i_1}$ to be the subarc of $a_{i_1}$ going from $a_{i_1}\cap \overline{b}_{|B_i|}$  to $a_{i_1} \cap \overline{b}_{1}$ and is disjoint from the rest of $\overline{b}_j$'s (see Figure \ref{figai1}). Observe that $f^2\cdot \gamma_i$ is a subarc of $a_{i_1}$ with interior  disjoint from the curves  $\overline{b}_j$'s. The endpoints of $f^2\cdot \gamma_i$ are points of intersection of $a_{i_1}$ with  $f^2\cdot \overline{b}_1$ and with $f^2\cdot \overline{b}_{|B_i|}$. Since $\overline{b}_1$ and $\overline{b}_{|B_i|}$ have the same power in $\tau_B$ and $f^2$ preserves the exponents, the curves $f^2\cdot \overline{b}_1$ and $f^2\cdot \overline{b}_{|B_i|}$ have the same exponent. But the only subarc of $a_{i_1}$ as above is $\gamma_{i_1}$, that is, $\gamma_{i_1}$ is the only subarc of $a_{i_1}$ between consecutive intersection points such that the intersecting curves have the same power. Thus,  $f^2\cdot \gamma_i = \gamma_{i_1}$ and either $f^2$ fixes $\overline{b}_1$ and $\overline{b}_{|B_i|}$, or it interchanges them.
    
    If $f^2$ fixes both $\overline{b}_1$ and $\overline{b}_{|B_i|}$, we have that $f^2$ fixes every $\overline{b}_j$. To see this, consider the subarc of $a_i$ from $\overline{b}_1$ to $\overline{b}_2$ (disjoint from every other $\overline{b}_j$). Naturally, $f^2$ must send this subarc to the subarc of $a_{i_1}$ from $\overline{b}_1$ to $\overline{b}_2$. Since $f^2$ fixes $\overline{b}_1$, then it also fixes $\overline{b}_2$. By repeating the argument on adjacent subarcs, we conclude $f^2$ fixes every $\overline{b}_j$. However, this implies  $|\orb_f(\overline{b}_j)|=2$ for every $\overline{b}_j\in B_i$ and, since $b_i\in B_i$, this contradicts the hypothesis $|\orb_f(a_i)|>2$.  Thus, $f^2$ cannot fix $\overline{b}_1$ and $\overline{b}_{|B_i|}$.
    
    If, on the contrary, $f^2$ interchanges $\overline{b}_1$ and $\overline{b}_{|B_i|}$, i.e, $f^2\cdot \overline{b}_1=\overline{b}_{|B_i|}$ and $f^2\cdot \overline{b}_{|B_i|} = \overline{b}_1$. Again,  using the previous argument on subarcs, it is an easy exercise to check that $f^2\cdot \overline{b}_j = \overline{b}_{|B_i|+1-j}$ and $f^2\cdot \overline{b}_{|B_i|+1-j}  = \overline{b}_j$. Therefore, $|\orb_f(b)|=4$ for every $b\in B_i \setminus \{\overline{b}_{\frac{|B_i|+1}{2}  }\}$  and $|\orb_f(\overline{b}_{\frac{|B_i|+1}{2}} )|=2$. The $b_k$ of the statement corresponds to  $b_k =\overline{b}_{\frac{|B_i|+1}{2}}$.
\end{proof}

\begin{lemma}\label{lemaCansao2}
    Consider two reduced multitwists $\tau_A, \tau_B$ and let $f=\tau_A\tau_B\tau_A$. If $a_i\in A$ is a type 3 curve and $|\orb_f(a_i)|>2$, then there exists a unique $b_k\in B$ that intersects $a_i$ and such that $|\orb_f(b_k)|=2$. Furthermore, let $a_k$ be the curve  $f\cdot a_k=b_k$. Then   $i(a_i, b) =i(a_k, b)$ for every  $b\in B$. 
\end{lemma}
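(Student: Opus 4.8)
The uniqueness of $b_k$ is immediate from Lemma~\ref{lemaCansao}, which already shows that every curve of $B$ meeting $a_i$ other than $b_k$ has $f$-orbit of size four. Observe moreover that $b_i$ meets $a_i$ (because $a_i$ is of type~$3$) and that $b_i\neq b_k$: otherwise $a_k=f^{-1}\cdot b_k=f^{-1}\cdot b_i=a_i$, which is impossible since $|\orb_f(a_k)|=2$ while $|\orb_f(a_i)|>2$. Hence Lemma~\ref{lemaCansao} gives $|\orb_f(a_i)|=|\orb_f(b_i)|=4$, and $f$ interchanges $a_k$ and $b_k$.

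To prove the equality I first identify $a_k$. Since $f=\tau_A\tau_B\tau_A=\tau_B\tau_A\tau_B$, the whole situation is symmetric under exchanging $A$ and $B$, so the mirror of Lemma~\ref{lemaCansao} applies to the curve $b_i$, which plays the role of a type-$3$ curve (it lies in the $f$-orbit of $a_i$ and $|\orb_f(b_i)|=4>2$). That mirrored statement produces a unique curve of $A$ meeting $b_i$ with $f$-orbit of size two; since $i(a_k,b_i)=i(f^{-1}\cdot a_k,f^{-1}\cdot b_i)=i(b_k,a_i)=1$ and $|\orb_f(a_k)|=2$, that curve is $a_k$. In particular $a_k$ is the ``middle'' curve of $A_i'=\{a\in A:\ i(b_i,a)\neq0\}$. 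Several of the intersection numbers we will need now come for free from the orbit relations: $i(a_k,a_i)=0$ (both curves belong to $\tau_A$), $i(a_k,\tau_B\cdot a_i)=i(a_k,\tau_A\tau_B\cdot a_i)=i(a_k,b_i)=1$ (using $\tau_A\cdot a_k=a_k$), and, symmetrically, $i(a_k,\tau_B^{-1}\cdot a_i)=i(\tau_A\tau_B\cdot a_k,a_i)=i(b_k,a_i)=1$.

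It then remains to show that $a_k$ meets exactly the curves of $B_i$, each of them once; granting this, $i(a_k,b)=i(a_i,b)$ for every $b\in B$, which is the assertion. The plan is to feed the data above into the rigidity analysis of the planar torus $T_i$ carried out in the proof of Lemma~\ref{lemaCansao}: Lemma~\ref{lemaInterseccionOrbitas} (together with its $A\leftrightarrow B$ mirror) controls the algebraic intersection numbers of $a_k$ with $A\cup B$ along $f$-orbits, the classification of Table~\ref{table} confines all the relevant geometric intersection numbers to $\{0,1\}$, and the Alexander Method then forces $a_k$ into $T_i$ and identifies it, exactly as $a_{i_1}=f^{2}\cdot a_i$ was identified there, as a curve crossing every $\overline{b}_j$ once and disjoint from every curve of $B$ not in $B_i$.

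The step I expect to be the main obstacle is proving $i(a_k,b_k)=1$ --- equivalently, ruling out that $a_k$ is of type~$1$ (which would give $i(a_k,b_k)=0$) or of type~$4$ or~$5$. This cannot be extracted from the orbit relations alone; the idea is to use the identification of $a_k$ with the middle curve of $A_i'$ obtained above, together with the fact that the powers of the curves of $B_i$ alternate in sign along $a_i$ inside $T_i$ (that is, $X(a_i,\tau_B)=1$), which is precisely the rigidity input that made the parallel arguments in Lemmas~\ref{lemaOrbitasInt0}, \ref{lemaOrbitasInt1X0} and~\ref{lemaCansao} go through.
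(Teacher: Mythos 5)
Your proof of the first assertion (uniqueness of $b_k$, $b_i\neq b_k$, and the identity $i(a_k,b_i)=i(b_k,a_i)=1$ obtained by applying $f^{-1}$) is correct and matches the paper. But the main assertion, $i(a_i,b)=i(a_k,b)$ for \emph{every} $b\in B$, is not actually proved: you reduce it to showing that $a_k$ meets exactly the curves of $B_i$, each once, and then only describe a plan, explicitly flagging the key step as an unresolved ``obstacle.'' The plan does not close. The rigidity argument of Lemma~\ref{lemaCansao} pinned down $a_{i_1}=f^2\cdot a_i$ precisely because Lemma~\ref{lemaInterseccionOrbitas} gives $i(a_{i_1},b)=i(a_i,b)$ for all $b\in B$ \emph{for curves in the same $f$-orbit}; since $a_k$ lies in a different orbit, that lemma gives you no control on $i(a_k,b)$ versus $i(a_i,b)$, which is exactly the statement to be proved. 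Likewise, the mirrored torus picture around $b_i$ only constrains which curves of $A$ meet $b_i$; it says nothing about which curves of $B$ meet $a_k$, so neither the inclusion $B_i\subset B_k$ nor the reverse inclusion $B_k\subset B_i$ (ruling out a $b\in B$ disjoint from $a_i$ but meeting $a_k$ --- a case you do not address at all) follows from what you have assembled.

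The paper's mechanism is different and is the missing idea: a parity argument in homology. For $b_j\in B_i$ one expands $\pm1=\ihat([b_i],[a_j])=\sum_{b_l\in B_i\cap B_j} n_l\,\ihat([a_i],[b_l])\,\ihat([b_l],[a_j])$ and groups the terms by $f$-orbits. By Lemma~\ref{lemaCansao}, $B_i$ is a disjoint union of orbit-pairs together with the singleton $\{b_k\}$, and Lemma~\ref{lemaInterseccionOrbitas} shows each orbit-pair contributes an even integer to the sum; since the total is odd, $b_k$ must contribute an odd term, forcing $i(b_k,a_j)=1$ and hence $i(a_k,b_j)=1$. The same parity computation applied to $\ihat([b_i],[a_j])=0$ for a hypothetical $b_j$ meeting $a_k$ but not $a_i$ gives the contradiction for the reverse inclusion. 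Note also that the paper never needs to know in advance that $i(a_k,b_k)=1$ (the step you single out as the obstacle); that $a_k$ is of type~3 is deduced \emph{afterwards}, in Lemma~\ref{lemaOrbitasInt1X1}, from the equality $B_i=B_k$ and the oddness of $|B_i|$.
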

\begin{proof}
    The first part of the lemma follows from Lemma \ref{lemaCansao}. We only need to see that $i(a_i, b) =i(a_k, b)$ for every $b\in B$.
    
    Denote $B_i =  \{b\in B|\: i(b,a_i)\neq 0\}$ and $B_k =  \{b\in B|\: i(b,a_k)\neq 0\}$. First, we will see  $B_i\subset B_k$:
    
    Let $b_j\in B_i$.  From Table \ref{table}, we know that $i(a_i, b_j) =1$. Now, using Lemma  \ref{lemaInterseccionOrbitas} we get $i(b_i,a_j)=1$ and thus   
    
    \begin{align}
        \label{eqwhatever}
        \pm 1 & = \ihat([b_i],[a_j])  \\ 
        &= \ihat(\tau_A \tau_B\cdot [a_i], [a_j]) \nonumber\\
        &= \ihat(\tau_B\cdot [a_i], [a_j]) \nonumber\\ 
        &=  \sum_{b_l\in B} n_l \,\ihat([a_i], [b_l])\,\ihat([b_l], [a_j])\nonumber\\ 
        &=  \sum_{b_l\in B_i\cap B_j} n_l \,\ihat([a_i], [b_l])\,\ihat([b_l], [a_j]).\nonumber
    \end{align}

    Now, considering Lemma \ref{lemaCansao}, we can partition the set $B_i$ as 
    \[
    B_i = B\cap\left(\orb_f(\overline{b}_2) \sqcup \orb_f(\overline{b}_3)\sqcup  \dots  \sqcup  \orb_f(\overline{b}_\frac{|B_i|-1}{2}) \sqcup \{b_k\}\right),  
    \]
    where each orbit  has two curves in $B$ (except for $\{b_k\}$). This induces a partition of $B_i\cap B_j$ as disjoint union of $\orb_f(\overline{b}_m) \cap B_j$ and $\{b_k\} \cap B_j$. We are looking to prove $b_k \in B_j$, so seeking a contradiction assume $b_k \not \in B_j$. By means of Lemma \ref{lemaInterseccionOrbitas}, we have
    \begin{align*}
        n_l \,\ihat([a_i], [b_l])\,\ihat([b_l], [a_j]) &= 
        n_l \,\ihat([a_i], f^2\cdot[b_l])\,\ihat(f^2\cdot[b_l], [a_j]) \\ 
        &=
        \pm n_l \,\ihat([a_i], [f^2\cdot b_l])\,\ihat([f^2\cdot b_l], [a_j]).
    \end{align*}
    Note that for $\overline{b}_m \ne b_k$ each orbit $B_j \cap \orb_f(\overline{b}_m)$  has two elements, which contribute either as $2\cdot n_l \,\ihat([a_i], [b_l])\,\ihat([b_l], [a_j])$  or as zero to the sum. Therefore, if $b_k\not \in B_j$ the sum 
    \[
        \sum_{b_l\in B_i\cap B_j} n_l \,\ihat([a_i], [b_l])\,\ihat([b_l], [a_j])    
    \]
    is even. But this clearly contradicts that it is equal to $\pm 1$ (see Equation \ref{eqwhatever}). Hence, $b_k \in B_j$ and even more the argument yields that $n_k\ihat( [a_i], \, [b_k]) \, \ihat([b_k], [a_j]) = \pm \ihat([b_k], [a_j])$ is odd. Given that $i(b_k, a_j)\in \{0,1,2\}$, it follows that $i(b_k, \, a_j)=1$ and by Lemma \ref{lemaInterseccionOrbitas} we have $i(a_k, b_j)=1$. We conclude  $1=i(a_i, b)=(a_k, b)$ for every $b\in B_i$ and therefore  $B_i \subset B_k$.

	To complete the lemma, we need to see  $B_k\subset B_i$. Pursuing a contradiction, assume there exists  $b_j\in B$ with $i(a_k, b_j)\ne 0$ and $i(a_i, b_j)=0$. First, observe that $i(a_i, b_i)=i(a_i, b_k)=1$ implies  $j\ne i$ and  $j\ne k$. Now,   $i(a_k, b_j)=1$ follows from  $j\ne k$  (see Table \ref{table}). Moreover,   $0 = i(a_i,\, b_j)=i(b_i, \, a_{j_1})$ and  by Lemma \ref{lemaInterseccionOrbitas} we have $\ihat([b_i], \, [a_j])=0$. After this, consider 
    \begin{align*}
        0 &=  \ihat([b_i], \, [a_j])\\
        &=\ihat(\tau_A\tau_B\cdot [a_i],\, [a_j]) \\ 
        &= \sum_{b_l\in B_i}n_l\,\ihat([a_i],[b_l])\,\ihat([b_l],[a_j])\\ 
        &= \sum_{b_l\in B_i\cap B_j}n_l\,\ihat([a_i],[b_l])\,\ihat([b_l],[a_j]).
    \end{align*}
    Replicating the argument above yields that the previous  sum is odd. Indeed, each orbit $\orb_f(\overline{b}_m) \cap B_j$ contributes an even number to the sum, plus  
    \[
        n_k\,\ihat([a_i],[b_k])\,\ihat([b_k],[a_j]) =\pm1.
    \]
    But the sum cannot be odd and equal to zero. Thus, no such $b_j$ can exist. In other words, $B_k=B_i$ and it follows that  $i(a_i, \,b)=i(a_k, \,b)$ for every $b\in B$. 
\end{proof}

Finally, we prove Proposition \ref{proposicionOrbitasDos} for type 3 curves.

\begin{lemma}\label{lemaOrbitasInt1X1}
    Consider two reduced multitwists $\tau_A, \tau_B$ and let $f=\tau_A\tau_B\tau_A$. If $a_i\in A$ is a type 3 curve, then $|\orb_f(a_i)|=2$.  
\end{lemma}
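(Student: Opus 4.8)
The plan is to leverage the two preceding lemmas, which together reduce the type 3 case to a situation that is essentially a torus computation. Suppose for contradiction that $a_i \in A$ is a type 3 curve with $|\orb_f(a_i)| > 2$. By Lemma \ref{lemaCansao2} there is a unique curve $b_k \in B$ intersecting $a_i$ with $|\orb_f(b_k)| = 2$, and if $a_k$ is the curve with $f \cdot a_k = b_k$, then $i(a_i, b) = i(a_k, b)$ for every $b \in B$. The first thing I would check is that $a_k$ itself is a type 3 curve: this follows since $a_k$ has the same intersection pattern with $B$ as $a_i$ does, so in particular $i(a_k, b_k) = 1$, the multiset of powers of curves in $B$ meeting $a_k$ coincides with that for $a_i$, and hence by Equation (\ref{ecIvanov}) (Ivanov's hidden formula) $X(a_k, \tau_B) = X(a_i, \tau_B) = 1$ and $|n_k| = 1$. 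So $a_k$ is type 3.

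Next I would compare $a_i$ and $a_k$ directly. Since $i(a_i, b) = i(a_k, b)$ for all $b \in B$ and $i(a_i, a_k) = 0$ (this should come out of Lemma \ref{lemaInterseccionOrbitas} applied as in the earlier orbit lemmas, or be part of the setup), both $a_i$ and $a_k$ lie in a regular neighbourhood $T_i$ of $a_i \cup \bigcup_{b \in B_i} b$, which is a torus with $|B_i|$ punctures. The key point is that $b_k = f \cdot a_k$, so $a_k = f^{-1} \cdot b_k$, and since $|\orb_f(b_k)| = 2$ we get $f^2 \cdot b_k = b_k$, hence $f^2 \cdot a_k = a_k$; that is, $a_k$ is a fixed point of $f^2$ and $|\orb_f(a_k)| = 2$. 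Now $a_i$ and $a_k$ are two type 3 curves inside $T_i$ with identical intersection numbers with every curve of $B_i$, both disjoint from each other. I would then argue, by the same planar-torus inspection used in Lemma \ref{lemaCansao}, that inside $T_i$ the only curve with these intersection properties and disjoint from $a_k$ is $a_k$ itself (up to homotopy) — so $a_i = a_k$. But then $a_i = a_k$ forces $|\orb_f(a_i)| = |\orb_f(a_k)| = 2$, contradicting the assumption $|\orb_f(a_i)| > 2$.

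An alternative ending, in case the ``$a_i$ disjoint from $a_k$'' step is delicate, is to run the $\gamma_i$/$\gamma_{i_1}$ subarc argument from Lemma \ref{lemaCansao} one step further: there it was shown that $f^2$ either fixes all $\overline{b}_j \in B_i$ (contradiction, already handled) or reverses them via $f^2 \cdot \overline{b}_j = \overline{b}_{|B_i|+1-j}$. In the reversing case, $b_k = \overline{b}_{(|B_i|+1)/2}$ is the unique fixed curve. I would then track the two arcs of $a_i$ adjacent to $\overline{b}_k$ and compare them with the corresponding arcs of $a_{i_1} = f^2 \cdot a_i$: since $f^2$ reverses the cyclic order of $B_i$ about $a_i$ but must send the arc-structure of $a_i$ to that of $a_{i_1}$, and since (from Lemma \ref{lemaCansao}) $a_{i_1} \cap T_i$ is either $a_i$ or the single arc of Figure \ref{figai1}, a direct check of which of these is compatible with $f^2 \cdot \overline{b}_j = \overline{b}_{|B_i|+1-j}$ rules out the nontrivial arc, leaving $a_{i_1} = a_i$ — again a contradiction.

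The main obstacle I anticipate is the careful bookkeeping in the planar-torus picture: making rigorous (via the Alexander method, as flagged in Lemma \ref{lemaCansao}) that ``the only curve in $T_i$ with intersection number $1$ with each $\overline{b}_j$, disjoint from $a_i$, and with the prescribed intersections with $\tau_B^{\pm 1} \cdot a_i$ is determined'', and then pinning down exactly how the $f^2$-action on $B_i$ (the reversal) interacts with this curve to force it to be $a_i$ rather than the alternative arc. Everything else — that $a_k$ is type 3, that $f^2$ fixes $a_k$, and the homological inputs — is routine given Lemmas \ref{lemaInterseccionOrbitas}, \ref{lemaCansao}, and \ref{lemaCansao2}.
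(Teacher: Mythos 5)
There is a genuine gap, and it sits exactly where you flag your ``main obstacle.'' Both of your proposed endings rest on the claim that a local inspection of the planar torus $T_i$ (or $T_k$) forces $a_{i_1}=a_i$ (equivalently, rules out the nontrivial arc). But Lemma \ref{lemaCansao} already performs precisely this local inspection, and its conclusion is that there are \emph{two} configurations compatible with all the intersection conditions: the curve $a_i$ itself, \emph{or} the nontrivial arc of Figure \ref{figai1}; and under the hypothesis $|\orb_f(a_i)|>2$ the second one genuinely occurs, with $f^2$ acting on $B_i$ by the reversal $\overline{b}_j\mapsto\overline{b}_{|B_i|+1-j}$. In particular your first ending fails because the curve $a_i$ you are comparing with $a_k$ is not forced to equal $a_k$ --- it is exactly one of the curves realizing the nontrivial arc in $T_k$ --- and your second ending fails because the reversal is \emph{compatible} with the nontrivial arc, not incompatible with it: that is the content of the last paragraph of the proof of Lemma \ref{lemaCansao}, which derives no contradiction in the reversing case but instead records the consistent outcome $|\orb_f(b)|=4$ for $b\in B_i\setminus\{b_k\}$.

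What is missing is a genuinely global argument. The paper's proof proceeds by taking, for \emph{every} $b_j\in B_k$, the curve $a_j$ with $f\cdot a_j=b_j$ (each of which, by Lemma \ref{lemaCansao2}, has the same intersection pattern and hence is a nontrivial arc in $T_k$), choosing representatives so that these arcs sit compatibly with the cyclic order induced by $\overline{b}_1$ and $\overline{b}_{|B_i|}$, and then forming a regular neighbourhood $T$ of $\bigcup_j b_j\cup\bigcup_j a_j$. Inside this larger torus one computes directly (bigon criterion) that $i(\tau_B\tau_A\cdot\overline{b}_{|B_i|},\,\overline{a}_1)\neq 0$, which contradicts the fact that $\tau_B\tau_A\cdot\overline{b}_{|B_i|}$ lies in $A$ and curves in $A$ are pairwise disjoint. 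No analogue of this assembly of all the $a_j$'s, nor of the final intersection computation, appears in your proposal, and without it the contradiction cannot be reached from the local picture alone.
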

\begin{proof}
    Assume $|\orb_f(a_i)|>2$. By Lemma \ref{lemaCansao2}, there exist $b_k \in B_i$ with $|\orb_f(a_k)|=2$ and $i(a_k, b)=i(a_i, b)$ for every $b\in B$. Since $|B_i|=|B_k|$ is odd, we know from Table \ref{table} that $a_k$ is a type 3 curve and so  $i(a_k,\,b_k)=1$, $|n_k|=1$ and $X(a_k, \tau_B)=1$.
    
    Take any curve  $b_j \in B_k \setminus \{b_k\}$,  from Lemma \ref{lemaCansao} we have that $|\orb_f(b_j)|=4$. Notice that $a_j$ is a type 3 curve,  since lemmas \ref{lemaOrbitasInt2}, \ref{lemaOrbitasNi2}, \ref{lemaOrbitasInt0} and \ref{lemaOrbitasInt1X0} rule out any other type in Table \ref{table}. Now, considering Lemma \ref{lemaCansao2}, we deduce  $i(a_k, b)=i(a_j, b)$ for every $b\in B$. As a consequence, any two curves $b_j, b_l\in B_k=B_i$ satisfy $i(a_j, b)=i(a_l, b)$ for every $b\in B$. 

    We continue by considering $T_k$ an open regular neighbourhood of the curves $a_k\cup \bigcup_{b\in B_k}b$ (see Figure \ref{figtk1}). For any $b_j \in B_k$ consider the curve $a_j$ such that $f\cdot a_j =b_j$. By the previous paragraph, we know $a_j$ satisfies the equations 
    \[ 
        \begin{cases}
            i(a_j, a_k) = 0, \\ 
            i(a_j, b) = i(a_k, b) &\forall b\in B, \\
            i(a_j, \tau_B\cdot a_k)=1,\\
            i(a_j, \tau_B^{-1}\cdot a_k)=1.\\
        \end{cases}    
    \]
    Since $a_j$ satisfies the same conditions as $a_{i_1}$ in the proof of Lemma \ref{lemaCansao}, we may use the same argument to find a representative of $a_j$  disjoint from $a_k$ such that  $a_j\cap T_k$ is an arc with endpoints in a single puncture, and  $T_k\setminus (a_k \cup a_j)$  is the union of an annulus and a $|B_k|-1$ punctured annulus. Simpler, $a_j\cap T_k$ is as in Figure \ref{figtk1}.
    
    \begin{figure}[h]
    	\labellist
    	\small\hair 2pt
    	\pinlabel {$a_k$} [ ] at 585 300
    	\pinlabel {$a_j$} [ ] at 703 167
    	\pinlabel {$\overline{b}_1^{+1}$} [ ] at 52 15
    	\pinlabel {$\overline{b}_2^{-1}$} [ ] at 167 15
    	\pinlabel {$\overline{b}_3^{+1}$} [ ] at 290 15
    	\pinlabel {$\overline{b}_4^{-1}$} [ ] at 399 15
    	\pinlabel {$\overline{b}_5^{+1}$} [ ] at 522 15
    	\endlabellist
    	\centering
        \includegraphics[width=0.6\linewidth]{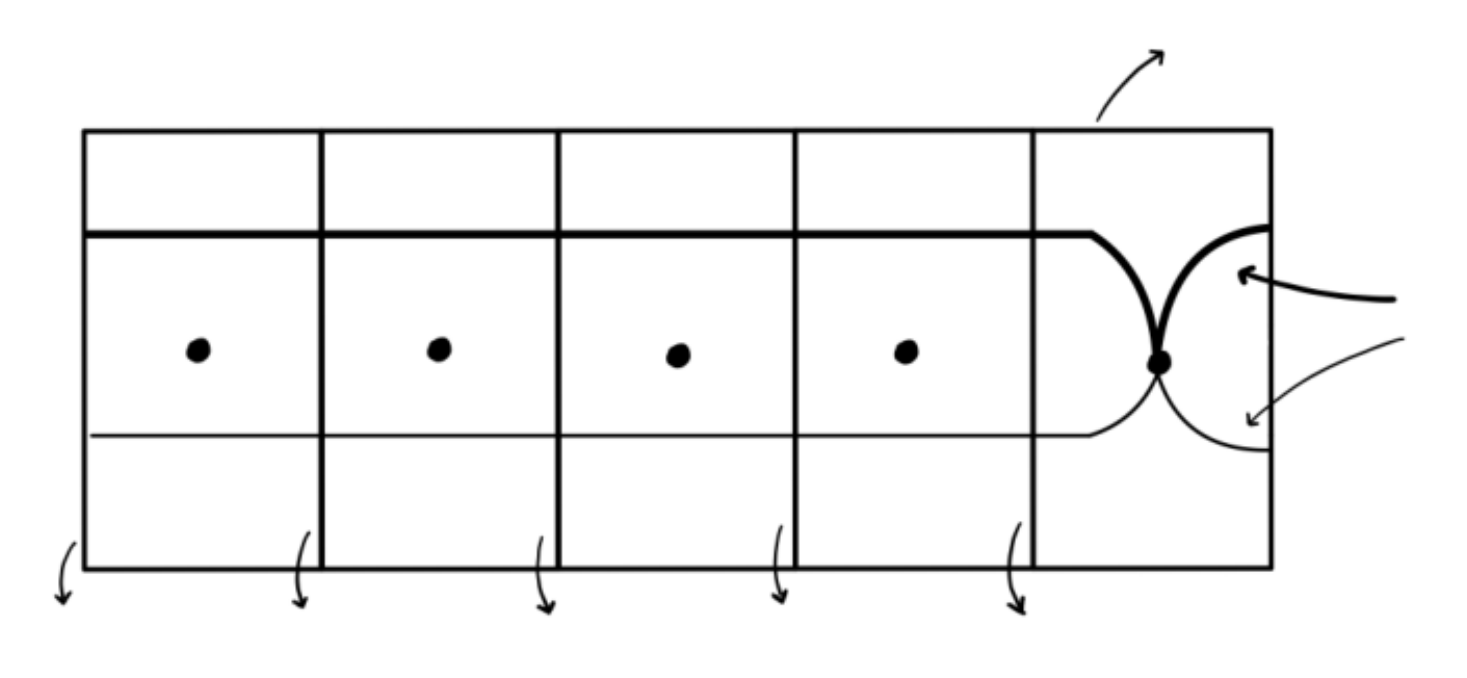}
        \caption{The planar torus $T_k$. The thick line and the thin line represent the two possible arcs for $a_{j}\cap T_i$.}
        \label{figtk1}
    \end{figure}

    We emphasize that $a_j\cap T_k$ is as in Figure \ref{figtk1} for any curve $a_j\in A$ with $f\cdot a_j = b_j\in B_k$. By taking appropriate representatives of each $a_j$, we may assume  that $\overline{b}_1$ and $\overline{b}_{|B_i|}$  induce the same cyclic order on the $a_j$'s (see Figure \ref{figtk2}). 
    
    \begin{figure}[h]
    	\labellist
    	\small\hair 2pt
    	\pinlabel {$a_k$} [ ] at 560 298
    	\pinlabel {$a_{j_1}$} [ ] at 683 248
    	\pinlabel {$a_{j_2}$} [ ] at 683 200
    	\pinlabel {$a_{j_3}$} [ ] at 683 125
    	\pinlabel {$a_{j_4}$} [ ] at 675 65
    	\pinlabel {$\overline{b}_1^{+1}$} [ ] at 32 15
    	\pinlabel {$\overline{b}_2^{-1}$} [ ] at 162 15
    	\pinlabel {$\overline{b}_3^{+1}$} [ ] at 280 15
    	\pinlabel {$\overline{b}_4^{-1}$} [ ] at 383 15
    	\pinlabel {$\overline{b}_5^{+1}$} [ ] at 503 15
    	\endlabellist
    	\centering
        \includegraphics[width=0.6\linewidth]{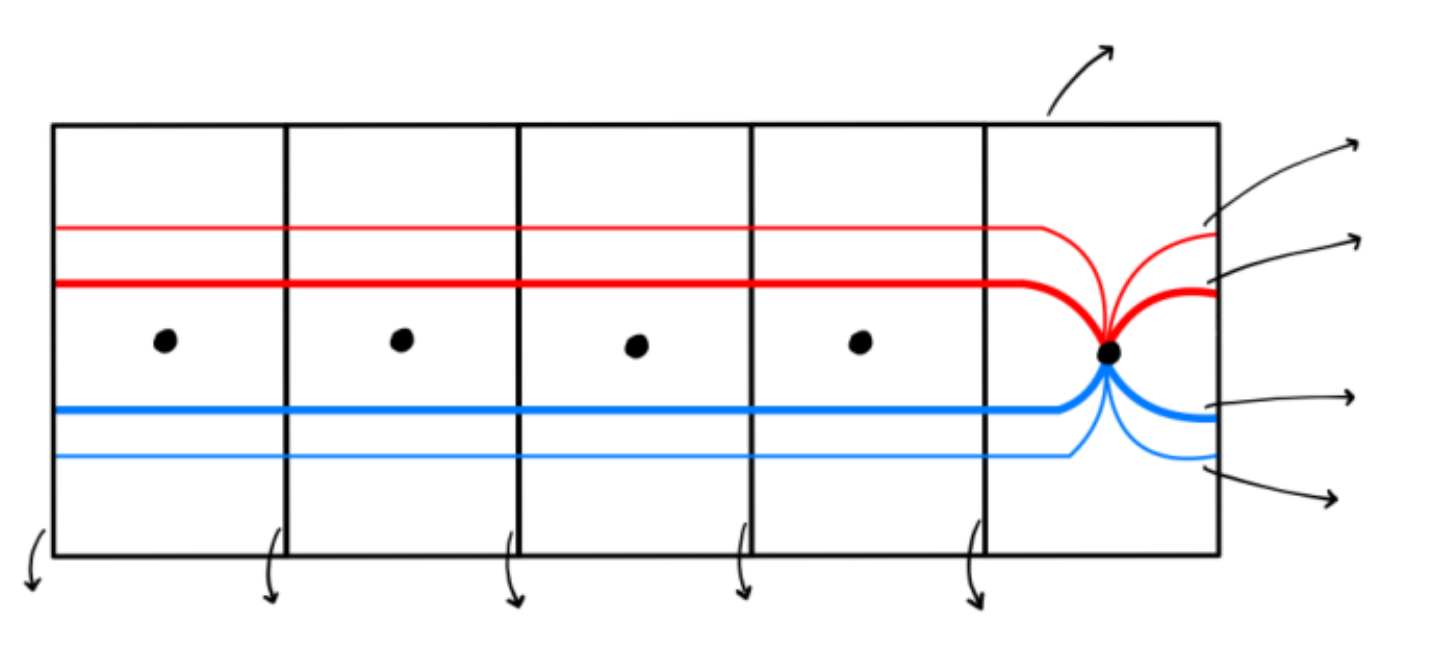}
        \caption{The planar torus $T_k$. The arcs $a_{j_l}\cap T_k$ for $a_{j_l}\in A$ are represented with different colours or thickness. }
        \label{figtk2}
    \end{figure}

    Now, consider $T$ the regular neighbourhood of $\bigcup_{b_j\in B_k}b_j \cup \bigcup_{b_j\in B_k}a_j$. The neighbourhood $T$ is homeomorphic to a  torus with punctures. We represent $T$ as a planar torus in Figure \ref{figtorusT}, where we denote $\overline{a}_j$ the curves that satisfy $f\cdot \overline{a}_j = \overline{b}_j$. To check that $T$ is a  torus with punctures, it is enough to consider a regular neighbourhood of the curves as depicted in Figure \ref{figtk2}.

    Recall that the intersection of $a_k$ with curves  $\overline{b}_j$ induces the cyclic order $\overline{b}_1, \dots,\,\overline{b}_{|B_i|}$. In the same fashion, the intersection of $b_k$ with curves $\overline{a}_j$ induces  the cyclic order $\overline{a}_1, \dots,\,\overline{a}_{|B_i|}$ (see Figure \ref{figtorusT}). This simply follows from the fact that $f\cdot \overline{a}_j=\overline{b}_j$ and $f\cdot a_k = b_k$. 

    \begin{figure}[h]
    	\labellist
    	\small\hair 2pt
    	\pinlabel {$a_k=\overline{a}_3^{+1}$} [ ] at 567 269
    	\pinlabel {$\overline{a}_4^{-1}$} [ ] at 612 216
    	\pinlabel {$\overline{a}_5^{+1}$} [ ] at 614 167
    	\pinlabel {$\overline{a}_1^{+1}$} [ ] at 614 120
    	\pinlabel {$\overline{a}_2^{-1}$} [ ] at 614 74
    	\pinlabel {$\overline{b}_1^{+1}$} [ ] at 34 17
    	\pinlabel {$\overline{b}_2^{-1}$} [ ] at 134 17
    	\pinlabel {$\overline{b}_3^{+1}$} [ ] at 247 17
    	\pinlabel {$\overline{b}_4^{-1}$} [ ] at 349 17
    	\pinlabel {$\overline{b}_5^{+1}$} [ ] at 449 17
    	\endlabellist
    	\centering
        \includegraphics[width=0.7\linewidth]{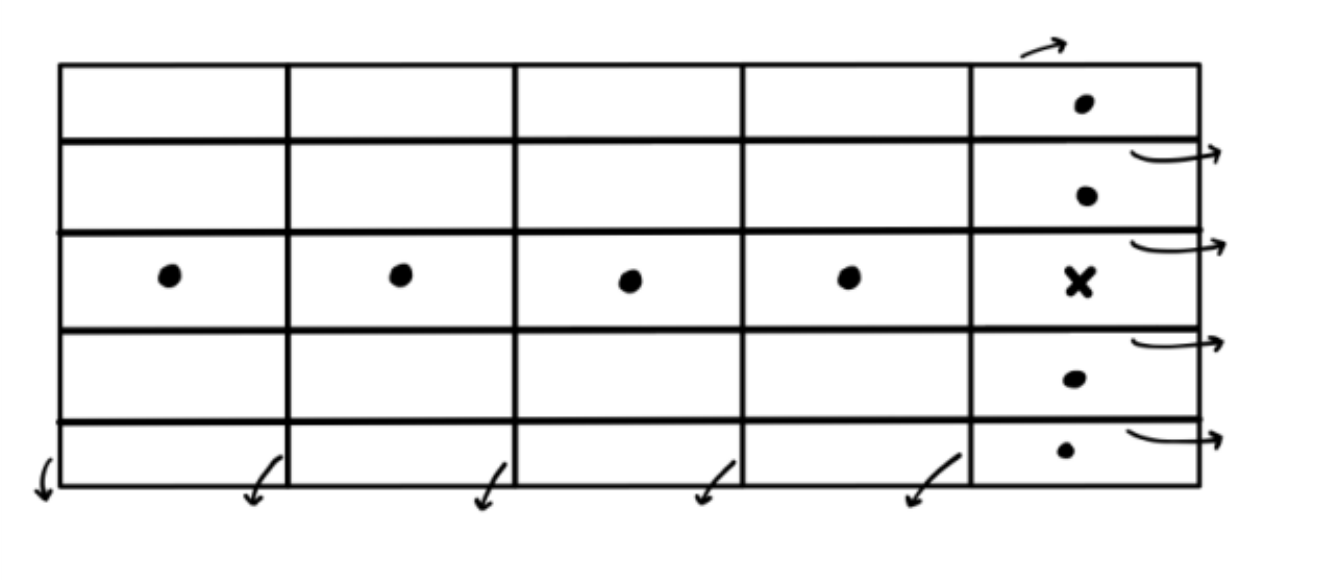}
        \caption{The planar torus $T$. The puncture marked with a cross might be a disk. }
        \label{figtorusT}
    \end{figure}

    Finally, one may directly check that $i(\tau_B\tau_A \cdot \overline{b}_{|B_i|}, \, \overline{a}_1)\ne 0$ by taking a representative of $\tau_B\tau_A \cdot \overline{b}_{|B_i|}$ in $T$ and using the bigon criterion. This clearly yields a contradiction, as  $\tau_B\tau_A \cdot \overline{b}_{|B_i|}\in A$ and any two curves in $A$ are disjoint. 
\end{proof}

Collecting the previous lemmas we conclude  $|\orb_f(a_i)|=2$ for any curve $a_i\in A$. This ends up the proof of Proposition  \ref{proposicionOrbitasDos}.

\begin{proof}[Proof of Proposition \ref{proposicionOrbitasDos}]
    Consider a curve $c\in A\cup B$. To prove $|\orb_f(c)|=2$, it is enough to prove it for  $c\in A$. By Proposition \ref{propClasificacion} the curve $c$ is of type 1, 2, 3, 4 or 5,  so it satisfies the conditions of either  Lemma  \ref{lemaOrbitasInt2}, \ref{lemaOrbitasNi2}, \ref{lemaOrbitasInt0}, \ref{lemaOrbitasInt1X0} or \ref{lemaOrbitasInt1X1}. In any case, the conclusion is  $|\orb_f(c)|=2$. 
\end{proof}

\subsection{Reduced multitwists are trivial}



We are close to proving Proposition \ref{propSencillisimo}. Before doing so, we devote the next lemmas to understand three concrete cases.

\begin{lemma}\label{lemaColofonNi2}
    Consider two reduced multitwists $\tau_A, \tau_B$ and let $f=\tau_A\tau_B\tau_A$. If $a_i\in A$ is a type 2 curve, then $a_i$ intersects at least three  distinct curves in $B$.
\end{lemma}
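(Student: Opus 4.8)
The plan is to argue by contradiction. Suppose $a_i$ is of type 2 and meets at most two curves of $B$. Since $\tau_A,\tau_B$ are reduced it meets exactly two, and being of type 2 these are its partner $b_i=f\cdot a_i$ with $|n_i|=2$, and a second curve $b_j$ with $i(a_i,b_j)=1$ and $|n_j|=1$ (Table \ref{table}); moreover $X(a_i,\tau_B)=0$ forces the powers of $b_i$ and $b_j$ in $\tau_B$ to have opposite signs, so after replacing $\tau_A,\tau_B$ by their inverses if necessary we may take $n_i=2$ and $n_j=-1$.

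First I would use the orbit structure to enlarge the picture. By Proposition \ref{proposicionOrbitasDos}, $f=\tau_A\tau_B\tau_A$ has all orbits of size two, so it interchanges $a_i\leftrightarrow b_i$ and $a_j\leftrightarrow b_j$, where $a_j\in A$ denotes the curve with $f\cdot a_j=b_j$; note $a_j\ne a_i$, since $a_j$ lies in the same $f$-orbit as $b_j$ and hence has power $\pm1$. Applying $f$ to $i(a_i,b_j)=1$ gives $i(a_j,b_i)=1$. Thus $a_j$ has a neighbour $b_i$ with $i(a_j,b_i)=1$ and $|n_i|=2$, which, scanning Table \ref{table}, rules out types $1,2,3,5$ for $a_j$: so $a_j$ is of type 4, with $b_i$ as its unique power-$\pm2$ neighbour and with partner $b_j$, whence also $i(a_j,b_j)=1$. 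In summary the four curves $a_i,b_i,a_j,b_j$ pairwise intersect exactly once, except $i(a_i,a_j)=i(b_i,b_j)=0$.

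The core of the proof is to show this four-curve configuration cannot occur. I would let $T$ be an open regular neighbourhood of $a_i\cup b_i\cup a_j\cup b_j$; counting vertices and edges gives $\chi(T)=-4$, and the four curves fill $T$. Observe that $a_j$ cannot be isotoped into the neighbourhood $T_i$ of $a_i\cup b_i\cup b_j$ alone (that one has $\chi=-2$), which is why the arguments of Lemmas \ref{lemaOrbitasInt0} and \ref{lemaOrbitasInt1X0} do not apply directly. By Proposition \ref{proposicionOrbitasDos}, $f^2$ (which equals the central element $(\tau_A\tau_B)^3$) fixes each of the four curves, so $f$ preserves $T$ up to isotopy and restricts to a mapping class of $T$ realising the involution $(a_i\,b_i)(a_j\,b_j)$. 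Following the Alexander-method analysis of Lemma \ref{lemaCansao}, I would fix representatives of the four curves and of $f$ so that $f$ permutes them, draw $T$ as a planar surface, and track the arcs into which $a_i$ and $b_i$ are subdivided by the remaining curves; requiring $f$ to preserve the powers $n_l$ pins down its action on $T$, and then --- as at the end of Lemma \ref{lemaOrbitasInt1X1}, using the bigon criterion --- one checks that $\tau_B\tau_A\cdot b_i$, which lies in $A$, is forced to intersect $a_j$, contradicting disjointness of distinct curves of $A$.

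The main obstacle is this last step. The homological constraints among $a_i,b_i,a_j,b_j$ are by themselves consistent (the intersection form on their span is degenerate), so the contradiction is invisible on homology and must be drawn from the finer, arc-level picture of $T$ --- exactly the bookkeeping carried out in Lemmas \ref{lemaCansao}--\ref{lemaOrbitasInt1X1}. A secondary point to watch is that type 4 allows $a_j$ additional power-$1$ neighbours beyond $b_j$; these and their $f$-images must be included in $T$, but since $f$ permutes them in the same alternating pattern they do not affect the conclusion. One should also keep careful track of orientations and signs throughout.
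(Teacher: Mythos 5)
Your setup is sound and tracks the paper's up to the point where the real work begins: you correctly reduce to the configuration where $a_i$ meets exactly $b_i$ (power $\pm 2$) and $b_j$ (power $\pm 1$) with opposite signs, and your deduction via Proposition \ref{proposicionOrbitasDos} and Table \ref{table} that $a_j$ is of type 4 with $i(a_j,b_i)=i(a_j,b_j)=1$ and $a_j\ne a_i$ is correct. (The paper reaches the same intermediate fact, $i(a_j,b_j)=1$, by the homological computation $\pm1=\ihat(\tau_B\cdot[a_i],[a_j])=\pm2\pm\ihat([b_j],[a_j])$.) The problem is that everything after that is a plan rather than a proof. The entire contradiction rests on the assertion that, after pinning down the action of $f$ on a neighbourhood $T$ of the four curves, ``one checks that $\tau_B\tau_A\cdot b_i$ is forced to intersect $a_j$''; this check is never performed, and you yourself flag it as ``the main obstacle''. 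Since the whole content of the lemma is precisely to rule out this four-curve configuration, deferring that step leaves the argument without its core.

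Moreover, the step you dismiss is the one the paper actually uses, and your reason for dismissing it is circular. The paper's proof takes $T_i$ to be a regular neighbourhood of $a_i\cup b_i\cup b_j$ (a twice-punctured torus) and observes that the constraints $i(a_j,a_i)=0$, $i(a_j,b_i)=1$, $i(a_j,\tau_B\cdot a_i)=i(a_j,\tau_B^{-1}\cdot a_i)=1$ force $a_j$ to lie inside $T_i$, where the only curve satisfying them is $a_i$ itself --- contradicting $a_j\ne a_i$ (equivalently $b_j\ne b_i$). Your claim that $a_j$ cannot be isotoped into $T_i$ because $\chi(T)=-4<\chi(T_i)=-2$ presupposes that the four curves are distinct and realize the stated intersection pattern, which is exactly the hypothesis being refuted; it cannot be used to rule out the paper's containment argument. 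To repair your proof you would either have to carry out the Alexander-method analysis on $T$ in full detail, or fall back on the paper's shorter route: show $a_j\subset T_i$ and identify it with $a_i$ there.
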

\begin{proof}
    Seeking a contradiction, assume $a_i$ intersects at most two curves in $B$. Since the multitwists are reduced, $a_i$ intersects exactly two curves in $B$.

    Take $b_i, b_j\in B$ to be the two curves that intersect $a_i$. We may assume without loss of generality that  $n_i=2$ and $n_j=-1$ (see Figure \ref{figNi2}). We have that

    \begin{align*}
        \pm 1 &= \ihat(\tau_A\tau_B \cdot [a_i], \, [a_j]) \\ 
        &= \ihat(\tau_B\cdot [a_i], [a_j]) \\ 
        &= \ihat([a_i] + 2\ihat([a_i],[b_i])\,[b_i]-\ihat([a_i],[b_j])\,[b_j], \, [a_j]) \\ 
        &=2\ihat([a_i],[b_i])\,\ihat([b_i], [a_j])-\ihat([a_i],[b_j])\,\ihat([b_j],[a_j])\\ 
        &= \pm 2 \pm \ihat([b_j], [a_j]). 
    \end{align*}
    Notice that for the previous equation to be satisfied, it is required that $i(b_j, a_j)=1$. 
    
    \begin{figure}[h]
    	\labellist
    	\small\hair 2pt
    	\pinlabel {$b_j^{-1}$} [ ] at 91 316
    	\pinlabel {$a_i^{+2}$} [ ] at 341 263
    	\pinlabel {$b_i^{+2}$} [ ] at 627 325
    	\endlabellist
    	\centering
        \includegraphics[width=0.5\linewidth]{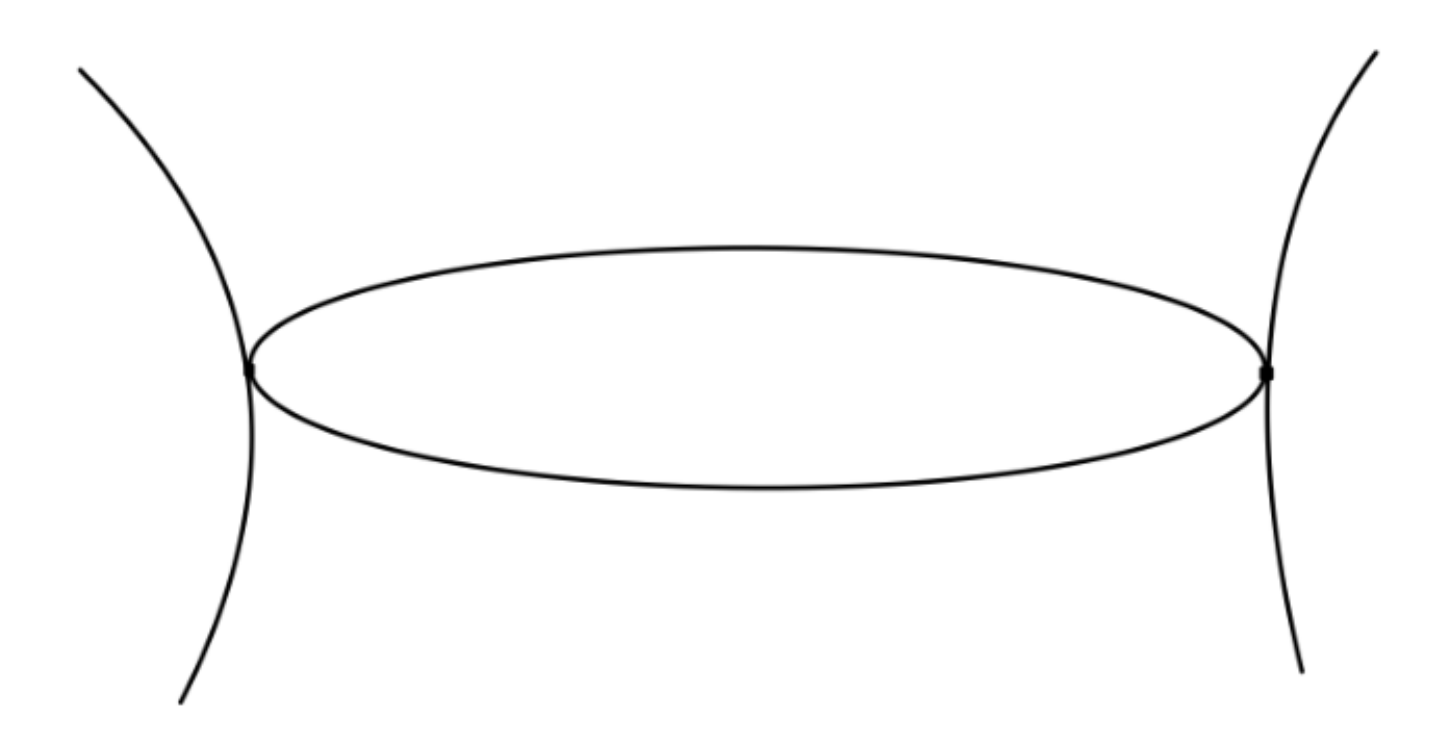}
        \caption{Regular neighbourhood of $i(a_i,b_i)=1$ with $|n_i|=2$ such that $a_i$ intersects only two curves in $B$.}
        \label{figNi2}
    \end{figure}

    To finish, consider $T_i$ a regular neighborhood of  $a_i\cup  b_i \cup b_j$. We represent $T_i$ as a planar torus in Figure  \ref{figToroYamecanse}. Now, we know  $a_j$ satisfies the equations 

    \begin{equation*}
        \begin{cases}
            i(a_j,\, b_i) = 1, \\
            i(a_j,\, a_i)=0,\\ 
            i(a_j ,\, \tau_B\cdot a_i ) = 1, \\ 
            i(a_j ,\,  \tau_B^{-1} \cdot a_i ) = 1.\\ 
        \end{cases}
    \end{equation*}
    These conditions imply that $a_j$ is a curve contained in $T_i$. But the only curve in $T_i$ satisfying above equations is $a_i$, so $a_j=a_i$. However, this is impossible since we assumed $b_j \ne b_i$.

    \begin{figure}[h]
    	\labellist
    	\small\hair 2pt
    	\pinlabel {$a_i$} [ ] at 256 360
    	\pinlabel {$\tau_B\cdot a_i$} [ ] at 430 220
    	\pinlabel {$\tau_B^{-1}\cdot a_i$} [ ] at 430 136
    	\pinlabel {$b_i^{+2}$} [ ] at 36 10
    	\pinlabel {$b_j^{-1}$} [ ] at 208 10
    	\endlabellist
    	\centering
        \includegraphics[width=0.4\linewidth]{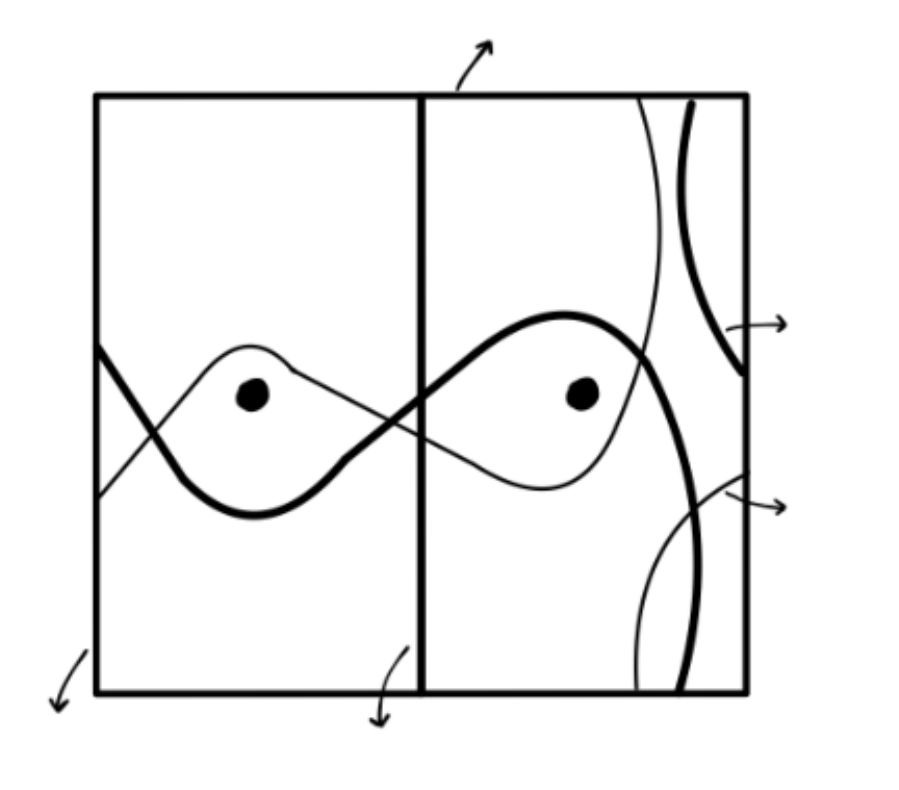}
        \caption{Planar torus $T_i$ with $i(a_i,b_i)=1$ and $|n_i|=2$, such that $a_i$ intersects only two curves in $B$.}
        \label{figToroYamecanse}
    \end{figure}

\end{proof}

\begin{lemma}\label{lemaColofonInt0}
    Consider two reduced multitwists $\tau_A, \tau_B$ and let $f=\tau_A\tau_B\tau_A$. If  $a_i\in A$ is a type 1 curve, then $a_i$ intersects at least three curves in $B$.
\end{lemma}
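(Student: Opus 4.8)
The plan is to mimic the structure of Lemma \ref{lemaColofonNi2}, arguing by contradiction. Assume $a_i \in A$ is a type $1$ curve intersecting at most two curves in $B$. Since $\tau_A,\tau_B$ are reduced, $a_i$ intersects exactly two curves; call them $b_i, b_j \in B$. By Table \ref{table} (type $1$), both have power $\pm 1$ in $\tau_B$ and satisfy $i(a_i,b_i)=i(a_i,b_j)=1$, and since $X(a_i,\tau_B)=0$ the two intersection arcs of $a_i$ force the powers of $b_i$ and $b_j$ to have opposite signs; normalize so that the power of $b_i$ is $+1$ and that of $b_j$ is $-1$. A regular neighbourhood $T_i$ of $a_i \cup b_i \cup b_j$ is then a planar torus with (at least) two punctures (the picture is the analogue of Figures \ref{figToroPlano0} and \ref{figToroYamecanse} with $i(a_i,b_i)=0$ replaced by $i(a_i,b_i)=1$). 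Note this contradicts nothing yet — a type $1$ curve has $i(a_i,b_i)=0$, so in fact $a_i$ is \emph{not} one of the two curves $b_i,b_j$ in the relabelling of Section \ref{secIntoTheProof}; rather $b_i$ here just denotes $f\cdot a_i$, which by type $1$ is disjoint from $a_i$. I will keep the notation $b_i = f\cdot a_i$ and $b_j$ for the other intersecting curve, both of power $\pm 1$.

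The key computational step is the algebraic intersection identity, exactly as in Lemma \ref{lemaColofonNi2}. Using $\tau_A\tau_B\cdot a_i = b_i$ together with Lemma \ref{lemaInterseccionOrbitas} (or directly Lemma \ref{lemaInterseccionAlg}), and writing $a_j$ for the curve with $f\cdot a_j = b_j$, compute
\[
\ihat([b_i],[a_j]) = \ihat(\tau_A\tau_B\cdot[a_i],[a_j]) = \ihat(\tau_B\cdot[a_i],[a_j]) = \ihat([a_i],[b_i])\,\ihat([b_i],[a_j]) - \ihat([a_i],[b_j])\,\ihat([b_j],[a_j]),
\]
where I used that only $b_i,b_j$ meet $a_i$ and their powers are $+1,-1$. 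Since $\ihat([a_i],[b_i]), \ihat([a_i],[b_j]) \in \{\pm 1\}$ (geometric intersection $1$), this reads $\pm 1 = \pm 1 \cdot \ihat([b_i],[a_j]) \mp \ihat([b_j],[a_j])$, which forces $\ihat([b_j],[a_j]) \in \{0, \pm 2\}$; combined with Proposition \ref{propClasificacion} giving $i(b_j,a_j)\le 2$, and a parity/sign bookkeeping of the four choices of signs, I expect to pin down $i(b_j,a_j)$ and in particular deduce that $a_j$ meets $b_j$ (indeed $\ihat([b_i],[a_j])\ne 0$ as well, since the left side is $\pm1$). This is the arithmetic heart of the argument and the step most likely to require care: tracking all the sign choices so that the conclusion about $i(b_j,a_j)$ — and the fact that $a_j$ is forced to sit inside $T_i$ — actually follows. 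By Proposition \ref{proposicionOrbitasDos} we know $|\orb_f(a_i)|=2$, so $f^2\cdot a_i = a_i$, and $a_j = f^{-1}\cdot b_j$ lies in the orbit of the curves meeting $a_i$; by the same reasoning as in Lemmas \ref{lemaOrbitasInt0} and \ref{lemaColofonNi2} one checks $i(a_j,a_i)=0$, $i(a_j,\tau_B\cdot a_i)=i(a_j,b_i)$, and $i(a_j,\tau_B^{-1}\cdot a_i)$ are all determined.

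Finally, I would finish by the same subsurface argument as in Lemma \ref{lemaColofonNi2}: the curve $a_j$ (equivalently $f^2\cdot a_i$, once one checks $b_j$ is in the $f$-orbit of $b_i$, which follows from $|\orb_f(a_i)|=2$ forcing all curves meeting $a_i$ to be permuted among themselves) satisfies a system
\[
\begin{cases}
i(a_j,\, b_i) = 1,\\
i(a_j,\, a_i) = 0,\\
i(a_j,\, \tau_B\cdot a_i) = 1,\\
i(a_j,\, \tau_B^{-1}\cdot a_i) = 1,
\end{cases}
\]
and by inspecting the planar torus $T_i$ (using the Alexander method as flagged in Lemma \ref{lemaCansao}) the only curve satisfying these equations is $a_i$ itself. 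Hence $a_j = a_i$; but $f\cdot a_j = b_j \ne b_i = f\cdot a_i$ gives a contradiction. Therefore $a_i$ must intersect at least three curves in $B$. The main obstacle, as noted, is the sign bookkeeping in the homological computation — in particular making sure that the opposite-sign normalization of $b_i,b_j$ (forced by $X(a_i,\tau_B)=0$) interacts correctly with the $\pm$ ambiguities in $\ihat$, so that the system above genuinely has $a_i$ as its unique solution rather than admitting the "reflected" arc that appeared as a second possibility in Lemma \ref{lemaCansao}.
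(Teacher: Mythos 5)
Your overall strategy matches the paper's: assume exactly two intersecting curves, note their powers are $\pm 1$ and of opposite sign, build the regular neighbourhood $T_i$, run a homological computation, and trap $a_j$ inside $T_i$ to force $a_j=a_i$, contradicting $b_j\ne b_i$. However, there is a genuine gap at the computational heart, and it stems from the notational conflation that you flag but never resolve. For a type 1 curve the image $b_i=f\cdot a_i$ satisfies $i(a_i,b_i)=0$, so $b_i$ is \emph{not} one of the two curves meeting $a_i$; those are two further curves, say $b_j$ and $b_k$. Your displayed identity uses $b_i$ simultaneously as $f\cdot a_i$ (needed for the left-hand side $\ihat([b_i],[a_j])=\ihat(\tau_A\tau_B\cdot[a_i],[a_j])$) and as a curve with $\ihat([a_i],[b_i])=\pm1$ (needed for the right-hand side). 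These are incompatible: the correct expansion of $\ihat(\tau_B\cdot[a_i],[a_j])$ via Equation \ref{homologicalActionMultitwist} has no $[b_i]$-term (its coefficient $\ihat([a_i],[b_i])$ vanishes) and instead contains terms for both $b_j$ and $b_k$, giving
\[
\pm 1=\ihat([b_i],[a_j])=\pm\ihat([b_j],[a_j])\pm\ihat([b_k],[a_j]),
\]
where the left-hand side is $\pm1$ because $i(b_i,a_j)=i(f\cdot a_i,f\cdot b_j)=i(a_i,b_j)=1$ using Proposition \ref{proposicionOrbitasDos}.

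Your version of the equation, with $\ihat([b_i],[a_j])$ appearing on both sides, only yields $\ihat([b_j],[a_j])\in\{0,\pm2\}$, which does not show that $a_j$ meets $b_j$ (it is consistent with $i(a_j,b_j)=0$) and says nothing about $b_k$; so it cannot trap $a_j$ in $T_i$. The conditions $i(a_j,\tau_B^{\pm1}\cdot a_i)=1$ alone do not force $a_j=a_i$ either: as Figure \ref{figfalta} shows, a curve can satisfy them while passing through $T_i$ without being confined to it. What is actually needed is the dichotomy: if $i(a_j,b_j)\ne0$ then $a_j$ is confined to $T_i$ and equals $a_i$ (contradicting $b_j\ne b_i$); otherwise $i(a_j,b_j)=0$ and the corrected equation forces $i(a_j,b_k)=1$, which again confines $a_j$ to $T_i$. (Also, your parenthetical claim that $b_j$ lies in the $f$-orbit of $b_i$ is false: orbits have size two, so $\orb_f(b_i)=\{a_i,b_i\}$ does not contain $b_j$.) So the architecture is right, but the sign and term bookkeeping that you yourself identified as the main obstacle is exactly where the proposal breaks down.
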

\begin{proof}
    Seeking a contradiction, assume $a_i$ intersects at most two curves in $B$. Since the multitwists are reduced, $a_i$ intersects exactly two curves in $B$.
    
    Take $b_j, b_k\in B$  to be the two curves that intersect $a_i$. Without loss of generality, we may assume the Dehn twists in $\tau_B$ have  exponents $n_i=n_k= 1$ and $n_j=-1$ (see Figure \ref{figEntornoIntPeq}). 
    
    \begin{figure}[h]
        \begin{center}
        		\labellist
        	\small\hair 2pt
        	\pinlabel {$b_j^{-1}$} [ ] at 91 316
        	\pinlabel {$a_i^{+1}$} [ ] at 341 265
        	\pinlabel {$b_k^{+1}$} [ ] at 627 325
        	\endlabellist
        	\centering
        	\includegraphics[width=0.5\linewidth]{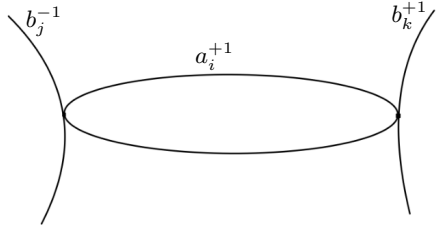}
        \caption{Regular neighbourhood of $a_i$ with $i(a_i,b_i)=0$ such that $a_i$ intersects exactly two curves in $B$.}
        \label{figEntornoIntPeq}
        \end{center}
    \end{figure}

    Let $T_i$ be a regular neighbourhood  of $a_i \cup b_j \cup b_k$. The subsurface $T_i$ is homeomorphic to a torus with two punctures. We represent $T_i$ as a planar torus in Figure \ref{figfalta}. Note that $a_j$ satisfies the equations 
    \begin{equation*}
        \begin{cases}
            i(a_j , \tau_B \cdot a_i)= 1, \\ 
            i(a_j, \tau_B^{-1} \cdot  a_i)= 1.
        \end{cases}
    \end{equation*}
    Any curve $a_j$ satisfying the conditions above is either $a_i$ or is a curve disjoint from $T_i$ (see Figure \ref{figfalta}). If $i(a_j,\, b_j)\ne 0$, then $a_j$ intersects $T_i$ and so $a_j=a_i$. However, this is not possible since $b_j\ne b_i$. Therefore, we can assume $i(a_j,b_j)=0$.

    \begin{figure}[h]
    	\labellist
    	\small\hair 2pt
    	\pinlabel {$a_i$} [ ] at 258 368
    	\pinlabel {$\tau_B^{-1}\cdot a_i$} [ ] at 440 230
    	\pinlabel {$\tau_B\cdot a_i$} [ ] at 435 140
    	\pinlabel {$b_k^{+1}$} [ ] at 49 17
    	\pinlabel {$b_j^{-1}$} [ ] at 216 17
    	\endlabellist
    	\centering
        \includegraphics[width=0.4\linewidth]{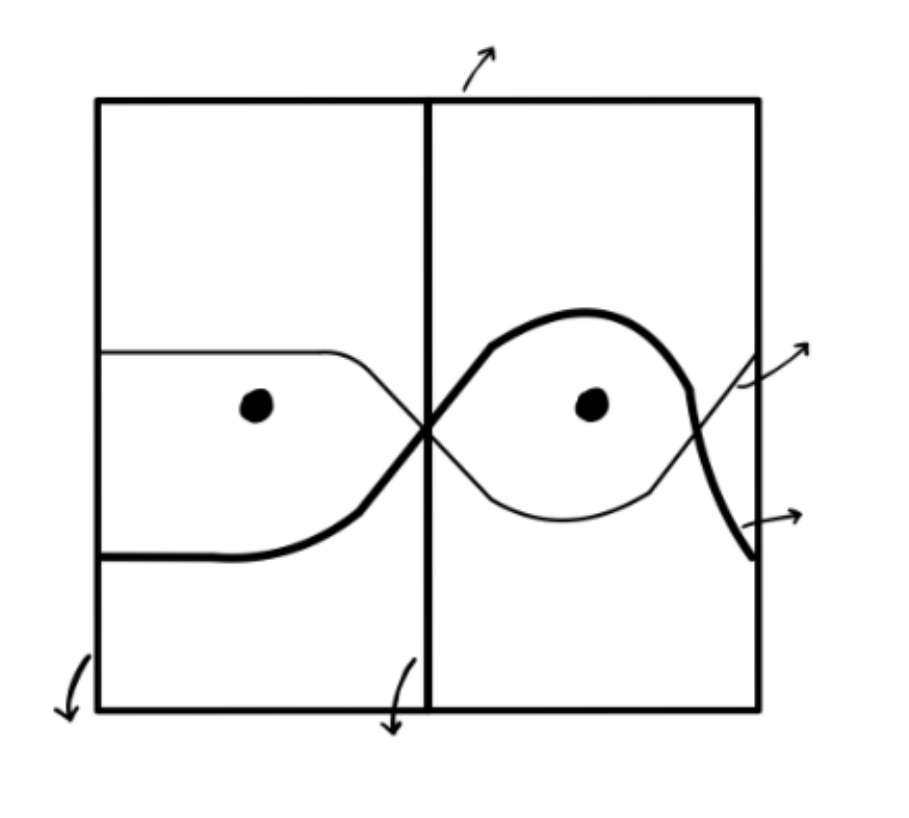}
        \caption{Regular neighbourhood of $i(a_i,b_i)=0$, where $a_i$ intersects two curves in $B$.}
        \label{figfalta}
    \end{figure}

    Notice that
    \begin{align*}
        \pm 1 &= \ihat(\tau_B\cdot [a_i], \,[a_j]) \\ 
        &= \ihat([a_i] + \ihat([a_i],[b_k])\,[b_k] - \ihat([a_i], [b_j])\,[b_j], [a_j]) \\ 
        &= \ihat([a_i],[b_k])\,\ihat([b_k], [a_j])- \ihat([a_i],[b_j])\,\ihat([b_j], [a_j])\\ 
        &= \pm\ihat([b_k], [a_j]) \pm \ihat([b_j], [a_j]). \\ 
    \end{align*}
    If $i(a_j,b_j)=0$ then the previous equation implies $i(a_j,b_k)=1$. Again, it follows that $a_j$ is not disjoint from $T_i$ and so we have the contradiction $a_i=a_j$. 
\end{proof}

\begin{lemma}\label{lemaColofonInt2}
    Consider two reduced multitwists $\tau_A, \tau_B$ and let $f=\tau_A\tau_B\tau_A$. If $a_i\in A$ is a type 5 curve and $a_i$ intersects at most three curves in $B$, \highlight{then}, there exists $a_j\in A$ such that:
    \begin{itemize}
        \item either $a_j$ is a type 1 curve,
        \item or $i(a_j, b_j)=1$ (type 2, 3 or 4),
        \item or $i(a_j,b_j)=2$ and $a_j$ intersects at least four curves in $B$. 
    \end{itemize}
\end{lemma}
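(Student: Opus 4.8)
The plan is to argue by contradiction. If the conclusion fails then, by Proposition~\ref{propClasificacion}, every curve of $A$ is of type~$5$ (it cannot be of type~$1$, and types $2,3,4$ all have intersection $1$ with their partner) and meets at most three curves of $B$; I must show this cannot happen for reduced $\tau_A,\tau_B$. First I would fix the combinatorial shape of such a curve $a_i$. By Table~\ref{table} it has $i(a_i,b_i)=2$, $|n_i|=1$, $X(a_i,\tau_B)=0$, and meets every other $b\in B$ at most once and with power $\pm1$. The two points of $a_i\cap b_i$ cut $a_i$ into two arcs; if one of them carried no further intersection point it would be a subarc joining $b_i$ to itself, contributing $1$ to $X(a_i,\tau_B)$ — impossible. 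Hence $a_i$ meets at least one more curve on each side of $b_i$, so it meets at least three curves of $B$, and by hypothesis exactly three, say $b_i$ (twice) and $b_j,b_l$ (once each, $j\ne l$ and both $\ne i$), in the cyclic order $b_i,b_j,b_i,b_l$ along $a_i$. Forcing each of these subarcs to contribute $0$ to $X(a_i,\tau_B)$ makes $n_j$ and $n_l$ have sign opposite to $n_i$; replacing $\tau_A,\tau_B$ by their (still braided, still reduced) inverses if necessary, we may take $n_i=1$, $n_j=n_l=-1$.

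Next I would sharpen this using homology. Writing $\tau_A\tau_B\cdot[a_i]=\sigma_i[b_i]$ with $\sigma_i\in\{-1,1\}$, and using Lemma~\ref{lemaInterseccionAlg} together with $\ihat([a_i],[a_p])=0$ for all $p$, one gets $\sigma_i\,\ihat([a_i],[b_i])=\sum_m n_m\,\ihat([a_i],[b_m])^2=\ihat([a_i],[b_i])^2-2$, which forces $\ihat([a_i],[b_i])=\pm2$. In particular $a_i\cup b_i$ fills a genus-one subsurface, so a regular neighbourhood $T_i$ of $a_i\cup b_i\cup b_j\cup b_l$ is a punctured torus, of the kind drawn in Figures~\ref{figToroYamecanse} and~\ref{figtk1}. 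I would also record the symmetry: since all $f$-orbits have size two (Proposition~\ref{proposicionOrbitasDos}) we have $f\cdot a_m=b_m$, $f\cdot b_m=a_m$, hence $i(a_m,b_n)=i(b_m,a_n)$ for all $m,n$; so the situation is symmetric in $A$ and $B$, every curve of $B$ is likewise of type~$5$ meeting exactly three curves of $A$, and the relation $m\sim n$ ("$a_m$ meets $b_n$", $m\ne n$) is symmetric and $2$-regular, i.e.\ a disjoint union of cycles on $\{1,\dots,k\}$. The opposite-sign rule from the first paragraph says $m\mapsto n_m$ is a proper $2$-colouring, so every cycle is even; a triangle $i\sim j\sim l\sim i$ is ruled out immediately.

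It remains to derive a contradiction from an even cycle, and here I would imitate the picture-driven arguments of Lemmas~\ref{lemaColofonNi2} and~\ref{lemaOrbitasInt1X1}. Transporting intersection numbers through $f$ and $\tau_B$, the curve $a_j$ with $f\cdot a_j=b_j$ satisfies $i(a_j,a_i)=0$, $i(a_j,b_i)=1$, $i(a_j,\tau_B\cdot a_i)=i(a_j,\tau_B^{-1}\cdot a_i)=1$ and $i(a_j,b_j)=2$. Drawing $T_i$ as a planar torus and using the bigon criterion, I would check that any curve realising the first four of these numbers must lie in $T_i$ and cross $b_j$ at most once, contradicting $i(a_j,b_j)=2$; if this local argument is not decisive because $T_i$ is too large, I would instead take the whole cycle component, relabel its indices $1,\dots,2r$ with $n_s=(-1)^s$, let $T$ be a regular neighbourhood of $a_1,\dots,a_{2r},b_1,\dots,b_{2r}$, present $T$ as a planar torus with the curves in the cyclic orders dictated by $\sim$, apply the Alexander method so that $f$ permutes the chosen representatives as prescribed, and read off from the picture that $f\cdot b_1=a_1$ must cross one of $a_2,\dots,a_{2r}$ — contradicting that curves of $A$ are pairwise disjoint, exactly as at the end of Lemma~\ref{lemaOrbitasInt1X1}. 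The hard part will be this last step: setting up the planar-torus model precisely enough — keeping track of all the punctures, the cyclic orders, and how $f$ moves the arcs — that the forbidden intersection really appears, and organising the cases for how the once-met curves attached to different $a_s$ fit together.
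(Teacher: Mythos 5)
Your opening analysis is correct and matches the paper's setup: a type~5 curve $a_i$ meeting at most three curves of $B$ must meet exactly three, namely $b_i$ twice and two further curves $b_j,b_l$ once each, with $n_j,n_l$ of sign opposite to $n_i$ (forced by $X(a_i,\tau_B)=0$). But your endgame has a genuine gap. Having negated the conclusion (so every curve of $A$ is type~5 meeting exactly three curves of $B$), you reduce to ruling out an even cycle in your incidence graph, and you propose to do this by a planar-torus/Alexander-method picture argument which you explicitly do not carry out ("the hard part will be this last step"). That step is not routine: unlike Lemma~\ref{lemaColofonNi2}, here the relevant curves have intersection number $2$ with their partners, the neighbourhood $T_i$ need not confine $a_j$, and the global model for an arbitrary even cycle is exactly the kind of case analysis that would need to be done in full. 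As written, the proof is incomplete.

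The paper avoids all of this with a short parity computation, and it is worth seeing why your framework would also close instantly with it. Pair $[a_j]$ against $[b_i]=\pm\tau_B\cdot[a_i]$ using Equation~\ref{homologicalActionMultitwist}: with $b_i,b_j,b_k$ the three curves meeting $a_i$, one gets
\[
\pm 1 \;=\; \ihat([a_j],[b_i]) \;=\; \pm\,\ihat([a_i],[b_i])\;\pm\,\ihat([a_j],[b_j])\;\pm\,\ihat([a_j],[b_k]).
\]
If $i(a_j,b_j)\in\{0,1\}$ the lemma's first two alternatives hold and one stops. If $i(a_j,b_j)=2$, then $\ihat([a_i],[b_i])$ and $\ihat([a_j],[b_j])$ are both even, so $\ihat([a_j],[b_k])$ is odd, forcing $i(a_j,b_k)=1$; since $n_j$ and $n_k$ have the same sign, $X(a_j,\tau_B)=0$ then forces $a_j$ to meet at least four curves of $B$ --- which is precisely the third alternative, so no contradiction is even needed at this stage (it is assembled later in Proposition~\ref{propSencillisimo}). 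Note that in your language this parity step says $j\sim k$ with $n_j=n_k$, violating your proper $2$-colouring, so your cycle analysis becomes superfluous. The missing ingredient in your write-up is exactly this parity observation; without it, you are left with an unexecuted topological argument.
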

\begin{proof}
    Since $\tau_A, \tau_B$ are reduced, $a_i$ must intersect at least  two curves. Using that $i(a_i,\,b_i)=2$ and $X(a_i,\,\tau_B)=0$ (see Table \ref{table} for type 5 curves), we deduce $a_i$ intersects exactly three curves. 
    
    Let $b_i, b_j, b_k \in B$ be the curves intersecting  $a_i$. We may assume  that  $n_j=n_k=-1$ and $n_i=1$. Now, we have 
    \begin{align*}
         \pm 1 & = \ihat([a_j], [b_i]) \\ 
         &= \ihat([a_j], \tau_B\cdot [a_i]) \\ 
         &= \ihat([a_j], [a_i] + \ihat([a_i],[b_i])[b_i] - \ihat([a_i],[b_j])[b_j]- \ihat([a_i],[b_k])[b_k] )\\ 
         &=  \ihat([a_i],[b_i])\,\ihat([a_j],[b_i]) - \ihat([a_i],[b_j])\,\ihat([a_j],[b_j])- \ihat([a_i],[b_k])\,\ihat([a_j],[b_k] )\\ 
         &= \pm \ihat([a_i],[b_i]) \pm \ihat([a_j],[b_j]) \pm \ihat([a_j],[b_k] ).
    \end{align*}
    If $i(a_j,b_j)\in \{0,1\}$, then we already have the statement. Suppose that $i(a_j,b_j)=2$. 
    
    Using that $i(a_i,b_i)=2$, the previous equation implies that $i(a_j, b_k)=1$. Since $i(a_j,b_j)=2$, $X(a_j, \tau_B)=0$ and $n_k = n_j = -1$, it follows that  $a_j$ must intersect at least four curves in $B$ as otherwise $X(a_j, \tau_B)\ne0$. 
\end{proof}

Lastly, we are ready to prove Proposition \ref{propSencillisimo}.

\begin{proof}[Proof of Proposition \ref{propSencillisimo}]
    Assume  $\tau_A, \tau_B$ are non-trivial and let  $f=\tau_A\tau_B \tau_A$. For each curve $a_i\in A$ consider the subset  $B_i\subset B$ of curves intersecting $a_i$. Since $\tau_A$ is non-trivial, there exists an $a_i\in A$.

    If $i(a_i,\,b_i)\leq 1$ and $|B_i|\geq 3$, consider the graph $\mathcal{G}$  isomorphic to the union of curves  $a_i\cup \bigcup_{b\in B_i} b$. Recall that Proposition  \ref{proposicionOrbitasDos} implies that  $f^2\cdot c = c$ for every $c\in A\cup B$, so   $f^2$ defines an automorphism of the graph $\mathcal{G}$ by means of the Alexander method. Even more, $f^2$ fixes every vertex in  $\mathcal{G}$ and so it follows that $f^2$ preserves the orientation of $a_i$. In other words, $f^2\cdot[a_i]=[a_i]$. Notice there exists $b\in B_i$ with $i(a_i, b)=1$ and   
    \[\ihat([a_i], [b]) = \ihat(f^2\cdot[a_i], f^2\cdot [b]) = \ihat([a_i], f^2\cdot [b]),\]
    but this clearly contradicts Lemma \ref{lemaInterseccionOrbitas}. As a result, there cannot exist  $a_i\in A$ with $i(a_i,\,b_i)\leq 1$ and $|B_i|\geq 3$.

    Suppose $a_i\in A$ is a curve with $i(a_i,\,b_i)=2$ and $|B_i|\geq 4$, then the previous argument yields the same contradiction. For $i(a_i, b_i)=2$, we emphasize $|B_i|\geq 4$ is used to ensure $f^2$ fixes every vertex in $\mathcal{G}$.

     To finish, we list the rest of the cases according to Proposition \ref{propClasificacion}:
    
    \begin{itemize}
        \item If $a_i$ is a type 1 curve, then Lemma \ref{lemaColofonInt0} ensures $|B_i|\geq 3$ and we  already proved this leads to  contradiction.
        \item If $a_i$ is a type 2 curve, then Lemma  \ref{lemaColofonNi2} implies $|B_i|\geq 3$. Again, we  know this leads to contradiction.
        \item If $a_i$ is a type 4 curve, then Table \ref{table}  guarantees the existence of another curve  $a_j\in A$ of type 2. However, this puts us in the previous case, that has already been discarded.
        \item If $a_i$ is a type 3 curve, then $|B_i|$  is odd. Since the multitwists are reduced, it follows that  $|B_i|\geq 3$ and we already checked this case drives to  contradiction.
        \item At last,  if $a_i$ is a type 5 curve and $|B_i|\leq 3$, then Lemma  \ref{lemaColofonInt2} implies the existence of an $a_j$ of type 1, 2, 3, 4, or 5, and if $a_j$ is of type 5 then $|B_j|\geq 4$. However, we proved in the previous cases that such $a_j$ cannot exist. Thus, $a_i$ of type 5 and $|B_i|\leq 3$ also leads to contradiction. 
    \end{itemize}
    
    We have seen that the existence of $a_i\in A$ leads to a contradiction independently of its type. Therefore, $A$ is empty and so is $B$. In other words, $\tau_A=\tau_B = 1 \in \mcg(S)$. 
\end{proof}

With this in hand, we may quickly prove Theorem  \ref{thm1}. 

\begin{proof}[Proof of Theorem \ref{thm1}]

    Consider two braided multitwists $\tau_A, \tau_B$. Following Section  \ref{secIntoTheProof} we can write 
    \begin{align*}
    	&\tau_A = \delta_{a_1}^{n_1} \dots \delta_{a_k}^{n_k}, \\ 
    	&\tau_B=\delta_{b_1}^{n_1}\dots\delta_{b_k}^{n_k}.
    \end{align*}
    where $\tau_A\tau_B\tau_A \cdot a_i = b_i$. Additionally, we write $A=\{a_1,\dots, a_k\}$ and $B=\{b_1,\dots,b_k\}$. Using Lemma \ref{lemma:ReduceToNoCommonCurve}, we may assume that $A$ and $B$ share no curves and by Lemma \ref{lem:IntersectAtLeastOne} we may assume every curve   $a\in A$ intersects some curve $b\in B$. 

    Let $a_i\in A$ be a curve intersecting a single curve $b$ in $B$. In this case, Lemma \ref{lemaDisjuntodetodoSalvoUna} guarantees   $b=b_i=f\cdot a_i$, $i(a_i,b_i)=1$ and $|n_i|=1$. Moreover, we can decompose $\tau_A = \tau_{A'} \circ \delta_{a_i}^{n_i}$ and $\tau_B = \tau_{B'} \circ \delta_{b_i}^{n_i}$, so that $\tau_{A'}, \tau_{B'}$ are still braided and share no curves.  
  
    Repeating the above process until every curve  $a_i\in A'$  intersects at least two curves in $B'$, leaves us with two reduced multitwists $\tau_{A'},\tau_{B'}$. But by Proposition \ref{propSencillisimo}, we have that $\tau_{A'}=\tau_{B'}=1$. Thus, by Lemma \ref{lemaDisjuntodetodoSalvoUna} we conclude that the curves $a_1, \dots, a_k,b_1,\dots, b_k$  are pairwise disjoint except for $i(a_i,b_i)=1$ and that $n_i \in \{-1, 1\}$, just as we claimed in Theorem \ref{thm1}.
\end{proof}

\printbibliography

\end{document}